\newtheorem{theorem}{Theorem}
\newtheorem{lemma}[theorem]{Lemma}
\newtheorem{corollary}[theorem]{Corollary}
\theoremstyle{definition}
\newtheorem{remark}[theorem]{Remark}
\newtheorem{example}[theorem]{Example}
\newtheorem{problem}[theorem]{Problem}
\newtheorem{definition}[theorem]{Definition}
\numberwithin{equation}{section}
\numberwithin{theorem}{section}
\author{P.D.~Dragnev}
\address{Department of Mathematical Sciences, Indiana University--Purdue University Fort Wayne, Fort Wayne, IN 46805,
          USA}
          \email{dragnevp@ipfw.edu}
\author{B.~Fuglede}
\address{Department of Mathematical Sciences, University of Copenhagen, Universitetsparken 5,
2100 Copenhagen, Denmark}
\email{fuglede@math.ku.dk}
\author{D.P.~Hardin}
\address{Center for Constructive Approximation, Department of Mathematics,
Vanderbilt University,
Nashville, TN 37240, USA}
\email{doug.hardin@vanderbilt.edu}
\author{E.B.~Saff}
\address{Center for Constructive Approximation, Department of Mathematics,
Vanderbilt University,
Nashville, TN 37240, USA}
\email{edward.b.saff@vanderbilt.edu}
\author{N.~Zorii}
\address{Institute of Mathematics,
National Academy of Sciences of Ukraine, Tereshchenkivska 3, 01601,
Kyiv-4, Ukraine}
\email{natalia.zorii@gmail.com}
\thanks{The research of the first author was supported, in part, by a Simons Foundation grant no.\ 282207. The research
of the third and the fourth authors was supported, in part, by the U.\ S.\ National Science Foundation under
grants DMS-1516400. The research of the fifth author was supported, in part, by the Scholar-in-Residence program at IPFW and by the Department of Mathematical Sciences of the University of Copenhagen.}
\begin{document}

\title[Condensers with intersecting plates]{Constrained minimum Riesz energy problems for a~condenser with~intersecting plates}

\begin{abstract}
We study the constrained minimum energy problem with an external field relative to the $\alpha$-Riesz kernel $|x-y|^{\alpha-n}$ of order $\alpha\in(0,n)$ for a generalized condenser $\mathbf A=(A_i)_{i\in I}$ in $\mathbb R^n$, $n\geqslant 3$, whose oppositely charged plates intersect each other over a set of zero capacity. Conditions sufficient for the existence of minimizers are found, and their uniqueness and vague compactness are studied. Conditions obtained are shown to be sharp. We also analyze continuity of the minimizers in the vague and strong topologies when the condenser and the constraint both vary, describe the weighted equilibrium vector potentials, and single out their characteristic properties. Our arguments are based particularly on the simultaneous use of the vague topology and a suitable semimetric structure on a set of vector measures associated with $\mathbf A$, and the establishment of completeness theorems for proper semimetric spaces. The results remain valid for the logarithmic kernel on $\mathbb R^2$ and $\mathbf A$ with compact $A_i$, $i\in I$. The study is illustrated by several examples.
\end{abstract}
\maketitle

\section{Introduction}
The purpose of the paper is to study minimum energy problems with an external field  (also known in the literature as weighted minimum energy problems) relative to the $\alpha$-Riesz kernel $\kappa_\alpha(x,y)=|x-y|^{\alpha-n}$ of order $\alpha\in(0,n)$ on $\mathbb R^n$, $n\geqslant3$, where $|x-y|$ denotes the Euclidean distance between $x,y\in\mathbb R^n$ and infimum is taken over classes of vector measures $\boldsymbol\mu=(\mu^i)_{i\in I}$ associated with a generalized condenser $\mathbf A=(A_i)_{i\in I}$.
More precisely, a finite ordered collection $\mathbf A$ of closed sets $A_i\subset\mathbb R^n$, $i\in I$, termed {\it plates\/}, with the sign $s_i=\pm1$ prescribed is a {\it generalized condenser\/} if oppositely signed plates intersect each other over a set of $\alpha$-Riesz capacity zero, while $\boldsymbol\mu=(\mu^i)_{i\in I}$ is {\it associated with\/} $\mathbf A$ if each $\mu^i$, $i\in I$, is a positive scalar Radon measure on $\mathbb R^n$ supported by $A_i$. Note that any two equally signed plates may intersect each other over a set of nonzero $\alpha$-Riesz capacity (or even coincide).  In accordance with an electrostatic interpretation of a condenser, we say that the interaction between the components $\mu^i$, $i\in I$, of $\boldsymbol\mu$ is characterized by the matrix $(s_is_j)_{i,j\in I}$, so that the $\alpha$-Riesz energy of $\boldsymbol\mu$ is defined by
\[\kappa_\alpha(\boldsymbol\mu,\boldsymbol\mu):=\sum_{i,j\in I}\,s_is_j\iint|x-y|^{\alpha-n}\,d\mu^i(x)\,d\mu^j(y).\]
The difficulties appearing in the course of our investigation are caused by the fact that a short-circuit between $A_i$ and $A_j$ with $s_is_j=-1$ may occur since those plates may have zero Euclidean distance; see Theorem~\ref{pr:nons} below providing an example of a condenser with {\it no\/} $\alpha$-Riesz energy minimizer.
Therefore it is meaningful to ask what kinds of additional requirements on the objects under consideration will prevent this blow-up effect, and secure that
a minimizer for the corresponding minimum energy problem does exist. This is shown to hold if we impose a proper upper constraint $\boldsymbol\sigma=(\sigma^i)_{i\in I}$ on the vector measures in question (see Section~\ref{sec-Gauss-c} for a formulation of the constrained problem).

Having in mind a further extension of the theory, we formulate main definitions and prove auxiliary results for a general {\it strictly positive definite\/} kernel $\kappa$ on a locally compact space $X$ (Sections~\ref{sec-pr}--\ref{sec-Gauss-c}). The approach developed for these $\kappa$ and $X$ is mainly based on the simultaneous use of the vague topology and a suitable semimetric structure on a set $\mathcal E^+_\kappa(\mathbf A)$ of all vector measures of finite energy associated with $\mathbf A$ (see Section~\ref{sec-Metric} for a definition of this semimetric structure). A key observation behind this approach is the fact that since a nonzero positive scalar measure of finite energy does not charge any set of zero capacity, there corresponds to every $\boldsymbol\mu\in\mathcal E^+_\kappa(\mathbf A)$ a scalar ({\it signed\/}) Radon measure $R\boldsymbol\mu=\sum_{i\in I}\,s_i\mu^i$ on $X$, and the mapping $R$ preserves the energy (semi)metric (Theorem~\ref{lemma:semimetric}), i.e.
\[\|\boldsymbol\mu_1-\boldsymbol\mu_2\|_{\mathcal E^+_\kappa(\mathbf A)}=\|R\boldsymbol\mu_1-R\boldsymbol\mu_2\|_{\mathcal E_\kappa(X)}.\]
Here $\mathcal E_\kappa(X)$ is the pre-Hilbert space of all scalar Radon measures on $X$ with finite energy. This implies that {\it the semimetric on\/ $\mathcal E^+_\kappa(\mathbf A)$ is a metric if and only if any two equally signed plates intersect each other only in a set of zero capacity\/}.\footnote{See Lemma~\ref{inj} and its proof providing an explanation of this phenomenon.} This approach extends that from \cite{ZPot1}--\cite{ZPot3} where the oppositely charged plates were assumed to be mutually disjoint.

Based on the convexity of the class of vector measures admissible for the problem in question, the isometry between $\mathcal E_\kappa(\mathbf A)$ and its $R$-image, and the pre-Hil\-bert structure on the space $\mathcal E_\kappa(X)$, we analyze the uniqueness of solutions (Lemma~\ref{lemma:unique:}). In view of the above observation, this solution is unique whenever any two equally signed plates intersect each other only in a set of zero capacity; otherwise any two solutions have equal $R$-images.

As for the vague topology, crucial to our arguments is Lemma~\ref{lemma-rel-cl} which asserts that if the $\sigma^i$, $i\in I$, are {\it bounded\/} then the admissible measures form a {\it vaguely compact\/} space. Intuitively this is clear since $\sigma^i(X)<\infty$ implies that $\sigma^i(U_\infty)<\varepsilon$ for any sufficiently small neighborhood $U_\infty$ of the point at infinity and $\varepsilon>0$ small enough. Thus, under the vague convergence of a net of positive scalar measures $\mu_s^i\leqslant\sigma^i$, $s\in S$, to $\mu^i$ {\it no\/} part of the total mass $\mu_s^i(X)$ can disappear at infinity.

This general approach is further specified for the $\alpha$-Riesz kernels $\kappa_\alpha$ of order $\alpha\in(0,n)$ on $\mathbb R^n$. Due to the establishment of completeness results for proper semimetric subspaces of $\mathcal E^+_{\kappa_\alpha}(\mathbf A)$ (Theorems~\ref{complete} and~\ref{cor-complete}),  we have succeeded in working out a substantive theory for the constrained $\alpha$-Riesz minimum energy problems. The theory developed includes sufficient conditions for the existence of minimizers (see Theorems~\ref{th-main-comp} and~\ref{th-main}, the latter referring to a generalized condenser whose plates may be noncompact); sufficient conditions obtained in Theorem~\ref{th-main} are shown by Theorem~\ref{th:sharp} to be sharp. Theorems~\ref{th-main-comp} and~\ref{th-main} are illustrated by Examples~\ref{ex-2} and~\ref{ex-2'}, respectively. We establish continuity of the minimizers in the vague and strong topologies when the condenser $\mathbf A$ and the constraint $\boldsymbol\sigma$ both vary (Theorem~\ref{th:cont}), and also describe the weighted vector potentials of the minimizers and specify their characteristic properties (Theorem~\ref{desc-pot}). Finally, in Section~\ref{sec:dual} we provide a duality relation between non-weighted constrained and weighted unconstrained minimum $\alpha$-Riesz problems for a capacitor ($I=\{1\}$), thereby extending the logarithmic potential result of \cite[Corollary 2.15]{DS}, now for a closed (not necessarily compact) set. For this purpose we utilize the established characteristic properties of the solutions to such extremal problems (see Theorem~\ref{desc-pot} below and \cite[Theorem~7.3]{ZPot2}).

The results obtained in Sections~\ref{sec:solvI} and \ref{sec:cont}--\ref{sec:dual} and the approach developed remain valid for the logarithmic kernel on $\mathbb R^2$ and $\mathbf A$ with compact $A_i$, $i\in I$ (compare with \cite{BC}). However, in the case where at least one of the plates of a generalized condenser is noncompact, a refined analysis is still provided as yet only for the Riesz kernels. This is caused by the fact that the above-mentioned completeness results (Theorems~\ref{complete} and~\ref{cor-complete}), crucial to our investigation, are substantially based on the earlier result of the fifth named author \cite[Theorem~1]{ZUmzh} which states that, in contrast to the fact that the pre-Hil\-bert space $\mathcal E_{\kappa_\alpha}(\mathbb R^n)$ is {\it incomplete\/} in the topology determined by the $\alpha$-Riesz energy norm \cite{Car}, the metric subspace of all $\nu\in\mathcal E_{\kappa_\alpha}(\mathbb R^n)$ such that $\nu^\pm$ are supported by fixed closed disjoint sets $F_i$, $i=1,2$, respectively, is nevertheless {\it complete\/}. In turn, the quoted theorem has been established with the aid of Deny's theorem \cite{D1} showing that $\mathcal E_{\kappa_\alpha}(\mathbb R^n)$ can be completed by making use of tempered distributions on $\mathbb R^n$ with finite energy, defined in terms of Fourier transforms, and this result by Deny seems not yet to have been extended to other classical kernels.

\begin{remark}\label{rem-intr}Regarding methods and approaches applied, assume for a moment that
\begin{equation}\label{eq-intr}\kappa|_{A_i\times A_j}\leqslant M<\infty\text{ \ whenever \ }s_is_j=-1,\end{equation}
$\kappa$ being a strictly positive definite kernel on a locally compact space $X$. (For the $\alpha$-Riesz kernels on $\mathbb R^n$, (\ref{eq-intr}) holds if and only if oppositely signed plates have nonzero Euclidean distance.) If moreover $\kappa$ is perfect \cite{F1}, then a fairly general theory of {\it un\-const\-rain\-ed\/} minimum weighted energy problems over $\boldsymbol\mu\in\mathcal E_\kappa^+(\mathbf A)$ has been developed in \cite{ZPot2,ZPot3} (see Remark~\ref{r-3} below for a short survey). The approach developed in \cite{ZPot2,ZPot3} substantially used requirement (\ref{eq-intr}), which made it possible to extend Cartan's proof \cite{Car} of the strong completeness of the cone $\mathcal E_{\kappa_2}^+(\mathbb R^n)$ of all positive measures on $\mathbb R^n$ with finite Newtonian energy to a perfect kernel $\kappa$ on a locally compact space $X$ and suitable classes of ({\it signed\/}) measures $\mu\in\mathcal E_\kappa(X)$. Theorem~\ref{pr:nons} below, pertaining to the Newtonian kernel, shows that assumption (\ref{eq-intr}) is essential not only for the proofs in \cite{ZPot2,ZPot3}, but also for the validity of the approach developed therein. Omitting now (\ref{eq-intr}), in the present paper we have nevertheless succeeded in working out a substantive theory for the Riesz kernels by imposing instead an appropriate upper constraint on the vector measures under consideration.\end{remark}

While our investigation is focused on theoretical aspects in a very general context, and possible applications are so far outside the frames of the present paper, it is noteworthy to remark that minimum energy problems in the constrained and unconstrained settings for the logarithmic kernel on $\mathbb R^n$, also referred to as 'vector equilibrium problems', have been considered for several decades in relation to Hermite--Pad\'{e} approximants \cite{GR,A} and random matrix ensembles \cite{Ku,AK}. See also \cite{HK,BKMW,Y,LT} and the references therein.

\section{Preliminaries}\label{sec-pr}

Let $X$ be a locally compact (Hausdorff) space, to be specified below, and $\mathfrak M(X)$ the linear
space of all real-valued scalar Radon measures $\mu$ on $X$, equipped with the {\it vague\/} topology, i.e.\ the topology of
pointwise convergence on the class $C_0(X)$ of all real-valued continuous functions on $X$ with compact
support.\footnote{When speaking of a continuous numerical function we understand that the values are {\it finite\/} real numbers.} The vague topology on $\mathfrak M(X)$ is Hausdorff; hence, a vague limit of any sequence (net) in $\mathfrak M(X)$ is unique (whenever it exists).
These and other notions and results from the theory of measures and integration on a locally compact space, to be used throughout the paper, can be found in \cite{E2,B2} (see also \cite{F1} for a short survey).
We denote by $\mu^+$ and $\mu^-$ the positive and the negative parts in the Hahn--Jordan decomposition of a measure $\mu\in\mathfrak M(X)$ and by $S^\mu_{X}=S(\mu)$ its support. A measure $\mu$ is said to be {\it bounded\/} if $|\mu|(X)<\infty$ where $|\mu|:=\mu^++\mu^-$. Let $\mathfrak M^+(X)$ stand for the (convex, vaguely closed) cone of all positive $\mu\in\mathfrak M(X)$, and let $\Psi(X)$ consist of all lower semicontinuous (l.s.c.)
functions $\psi: X\to(-\infty,\infty]$, nonnegative unless $X$ is compact.
The following well known fact (see e.g.\ \cite[Section~1.1]{F1}) will often be used.

\begin{lemma}\label{lemma-semi}For any\/ $\psi\in\Psi(X)$, $\mu\mapsto\langle\psi,\mu\rangle:=\int\psi\,d\mu$ is vaguely l.s.c.\ on\/ $\mathfrak M^+(X)$.\footnote{Throughout the paper the integrals are understood as {\it upper\/} integrals~\cite{B2}.}\end{lemma}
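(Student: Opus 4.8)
The plan is to reduce the assertion to the trivial case $\psi\in C_0(X)$, for which $\mu\mapsto\langle\psi,\mu\rangle$ is vaguely \emph{continuous} by the very definition of the vague topology. Accordingly, I would first treat the case $\psi\geqslant0$, invoking the classical representation of a nonnegative l.s.c.\ function on a locally compact space as an upper envelope of $C_0$-functions: if $\psi\in\Psi(X)$ and $\psi\geqslant0$, then
\[\psi=\sup\bigl\{f:\ f\in C_0(X),\ 0\leqslant f\leqslant\psi\bigr\},\]
the supremum being pointwise and over a family directed upward (since $\max(f_1,f_2)$ again belongs to it). This is a standard consequence of local compactness together with Urysohn's lemma (see e.g.\ \cite{B2,F1}). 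Correspondingly, for every $\mu\in\mathfrak M^+(X)$ the associated upper integral satisfies
\[\langle\psi,\mu\rangle=\sup\bigl\{\langle f,\mu\rangle:\ f\in C_0(X),\ 0\leqslant f\leqslant\psi\bigr\};\]
this is precisely the description of the integral of a nonnegative l.s.c.\ function by approximation from below by $C_0$-functions, again to be found in \cite{B2}.

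Granting these two facts, the case $\psi\geqslant0$ is immediate: each map $\mu\mapsto\langle f,\mu\rangle$ with $f\in C_0(X)$ is vaguely continuous on $\mathfrak M(X)$, hence vaguely l.s.c.\ on $\mathfrak M^+(X)$; and a pointwise supremum of an arbitrary family of l.s.c.\ functions on a topological space is l.s.c., its superlevel sets being unions of open sets. Therefore $\mu\mapsto\langle\psi,\mu\rangle$, being such a supremum, is vaguely l.s.c.

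It remains to cover the case in which $X$ is compact, where $\psi\in\Psi(X)$ need not be nonnegative. Since a l.s.c.\ function on a compact space is bounded below, there is a constant $c\geqslant0$ with $\psi+c\geqslant0$; then $\psi+c\in\Psi(X)$, so $\mu\mapsto\langle\psi+c,\mu\rangle$ is vaguely l.s.c.\ by the preceding paragraph, whereas $\mu\mapsto c\,\mu(X)=c\langle1,\mu\rangle$ is vaguely continuous because $1\in C_0(X)$ when $X$ is compact. Writing
\[\langle\psi,\mu\rangle=\langle\psi+c,\mu\rangle-c\,\mu(X)\]
exhibits $\mu\mapsto\langle\psi,\mu\rangle$ as the sum of a vaguely l.s.c.\ function and a vaguely continuous one, hence vaguely l.s.c., which completes the proof.

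There is no deep difficulty here — the statement is classical — but the one point to be handled with care is the passage from the \emph{pointwise} identity $\psi=\sup_f f$ to the \emph{integral} identity $\langle\psi,\mu\rangle=\sup_f\langle f,\mu\rangle$, i.e.\ the interchange of the directed supremum with the upper integral. This is not a routine monotone-sequence limit but a genuine statement about upper integrals of l.s.c.\ functions, and it is exactly where the convention that all integrals are upper integrals and the lower semicontinuity hypothesis on $\psi$ enter; everything else is formal.
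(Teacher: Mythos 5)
Your argument is correct and is precisely the classical proof underlying this statement, which the paper itself does not prove but simply quotes from \cite[Section~1.1]{F1} (it is likewise the Bourbaki definition/characterization of the upper integral of a nonnegative l.s.c.\ function that you invoke). The one step you rightly flag — passing from the pointwise directed supremum over $f\in C_0(X)$, $0\leqslant f\leqslant\psi$, to the supremum of the integrals — is exactly the content of the upper-integral convention in \cite{B2}, and your reduction of the compact case by adding a constant (using that $1\in C_0(X)$ and that upper integrals are additive when one summand is a finite constant times a bounded measure) is sound, so nothing is missing.
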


A {\it kernel\/} $\kappa(x,y)$ on $X$ is defined as a symmetric function from $\Psi(X\times X)$. Given $\mu,\mu_1\in\mathfrak M(X)$, we denote by
$\kappa(\mu,\mu_1)$ and $\kappa(\cdot,\mu)$ the {\it mutual
energy\/} and the {\it potential\/} relative to the kernel $\kappa$,
respectively, i.e.\footnote{When introducing notation of a numerical value, we assume
the corresponding object on the right to be well defined (as a finite number or $\pm\infty$).}
\begin{align*}
\kappa(\mu,\mu_1)&:=\iint\kappa(x,y)\,d\mu(x)\,d\mu_1(y),\\
\kappa(x,\mu)&:=\int\kappa(x,y)\,d\mu(y),\quad x\in X.
\end{align*}
Observe that $\kappa(x,\mu)$ is well defined provided that $\kappa(x,\mu^+)$ or $\kappa(x,\mu^-)$ is finite, and then $\kappa(x,\mu)=\kappa(x,\mu^+)-\kappa(x,\mu^-)$. In particular, if $\mu\in\mathfrak M^+(X)$ then $\kappa(\cdot,\mu)$ is defined everywhere and represents a l.s.c.\ function on $X$, bounded from below (see Lemma~\ref{lemma-semi}). Also note that $\kappa(\mu,\mu_1)$ is well defined and equal to $\kappa(\mu_1,\mu)$ provided that
$\kappa(\mu^+,\mu_1^+)+\kappa(\mu^-,\mu_1^-)$ or $\kappa(\mu^+,\mu_1^-)+\kappa(\mu^-,\mu_1^+)$ is finite.

For $\mu=\mu_1$ the mutual energy $\kappa(\mu,\mu_1)$ becomes the
{\it energy\/} $\kappa(\mu,\mu)$. Let $\mathcal E_\kappa(X)$ consist
of all $\mu\in\mathfrak M(X)$ whose energy $\kappa(\mu,\mu)$ is finite, which by definition means that $\kappa(\mu^+,\mu^+)$, $\kappa(\mu^-,\mu^-)$ and $\kappa(\mu^+,\mu^-)$ are all finite, and let $\mathcal E^+_\kappa(X):=\mathcal E_\kappa(X)\cap\mathfrak M^+(X)$.

For a set $Q\subset X$ let $\mathfrak M^+(Q)$ consist of all $\mu\in\mathfrak M^+(X)$ {\it carried by\/} $Q$, which means that $Q^c:=X\setminus Q$ is locally $\mu$-negligible, or equivalently that $Q$ is $\mu$-meas\-ur\-able and $\mu=\mu|_Q$ where $\mu|_Q=1_Q\cdot\mu$ is the trace (restriction) of $\mu$ on $Q$ \cite[Chapter~V, Section~5, n$^\circ$\,3, Example]{B2}. (Here $1_Q$ denotes the indicator function of $Q$.) If $Q$ is closed, then $\mu$ is carried by $Q$ if and only if it is supported by $Q$, i.e.\ $S(\mu)\subset Q$. Also note that if either $X$ is {\it countable at infinity\/} (i.e.\ $X$ can be represented as a countable union of compact sets \cite[Chapter~I, Section~9, n$^\circ$\,9]{B1}), or $\mu$ is bounded, then the concept of local $\mu$-negligibility coincides with that of $\mu$-negligibility; and hence $\mu\in\mathfrak M^+(Q)$ if and only if $\mu^*(Q^c)=0$, $\mu^*(\cdot)$ being the {\it outer measure\/} of a set.
Write $\mathcal E_\kappa^+(Q):=\mathcal E_\kappa(X)\cap\mathfrak M^+(Q)$, $\mathfrak M^+(Q,q):=\{\mu\in\mathfrak M^+(Q): \mu(Q)=q\}$ and $\mathcal E_\kappa^+(Q,q):=\mathcal E_\kappa(X)\cap\mathfrak M^+(Q,q)$, where $q\in(0,\infty)$.

In the rest of this section and throughout Sections~\ref{sec-cond}--\ref{sec-Gauss-c} a kernel $\kappa$ is assumed to be {\it strictly positive definite\/}, which means that the energy $\kappa(\mu,\mu)$, $\mu\in\mathfrak M(X)$, is nonnegative whenever defined, and it equals $0$ only for $\mu=0$.
Then $\mathcal E_\kappa(X)$ forms a pre-Hil\-bert space with the inner product $\kappa(\mu,\mu_1)$ and the energy norm
$\|\mu\|_{\mathcal E_\kappa(X)}:=\|\mu\|_\kappa:=\sqrt{\kappa(\mu,\mu)}$ \cite{F1}. The (Hausdorff) topology
on $\mathcal E_\kappa(X)$ defined by $\|\cdot\|_\kappa$ is termed {\it strong\/}.

In contrast to \cite{Fu4,Fu5} where a capacity has been treated as a functional acting on positive numerical functions on $X$, in the present study we use the (standard) concept of capacity as a set function. Thus the ({\it inner\/}) {\it capacity\/} of a set $Q\subset X$ relative to the kernel $\kappa$, denoted $c_\kappa(Q)$, is defined by
\begin{equation}\label{cap-def}c_\kappa(Q):=\bigl[\inf_{\mu\in\mathcal
E_\kappa^+(Q,1)}\,\kappa(\mu,\mu)\bigr]^{-1}\end{equation}
(see e.g.\ \cite{F1,O}). Then $0\leqslant c_\kappa(Q)\leqslant\infty$. (As usual, the
infimum over the empty set is taken to be $+\infty$. We also set $1\bigl/(+\infty)=0$ and $1\bigl/0=+\infty$.)

An assertion $\mathcal U(x)$ involving a variable point $x\in X$ is said to hold $c_\kappa$-{\it n.e.}\ on $Q$ if $c_\kappa(N)=0$ where $N$ consists of all $x\in Q$ for which $\mathcal U(x)$ fails to hold. We shall use the short form 'n.e.' instead of '$c_\kappa$-n.e.' if this will not cause any mis\-under\-standing.

\begin{definition}\label{def-perf}Following~\cite{F1}, we call a (strictly positive definite)
kernel $\kappa$ {\it perfect\/} if every strong Cauchy sequence in $\mathcal E_\kappa^+(X)$ converges strongly to any of its vague cluster points.\footnote{It follows from Theorem~\ref{fu-complete} that for a perfect kernel such a vague cluster point exists and is unique.}\end{definition}

\begin{remark}On $X=\mathbb R^n$, $n\geqslant3$, the Riesz kernel $\kappa_\alpha(x,y)=|x-y|^{\alpha-n}$, $\alpha\in(0,n)$, is strictly positive definite and moreover perfect \cite{D1,D2}, and hence so is the Newtonian kernel $\kappa_2(x,y)=|x-y|^{2-n}$ \cite{Car}. Recently it has been shown that if $X=D$ where $D$ is an arbitrary open set in
$\mathbb R^n$, $n\geqslant3$, and $G^\alpha_D$, $\alpha\in(0,2]$, is the $\alpha$-Green kernel on $D$ \cite[Chapter~IV, Section~5]{L}, then $\kappa=G^\alpha_D$ is strictly positive definite and moreover perfect \cite[Theorems~4.9, 4.11]{FZ}. The restriction of the logarithmic
kernel $-\log\,|x-y|$ on $\mathbb R^2$ to the closed
disk $\overline{B}(0,r):=\bigl\{\xi\in\mathbb R^2: \ |\xi|\leqslant r<1\bigr\}$ is perfect as well.\footnote{Indeed, the restriction of the logarithmic
kernel to $\overline{B}(0,r)$, $r<1$, is strictly positive definite by \cite[Theorem~1.16]{L}. Since it satisfies Frostman's maximum principle \cite[Theorem~1.6]{L}, it is regular according to \cite[Eq.~1.3]{O}, and hence perfect by~\cite{O1} (see also \cite[Theorem~3.4.1]{F1}).}\end{remark}

\begin{theorem}[{\rm see \cite{F1}}]\label{fu-complete} If the kernel\/ $\kappa$ is perfect, then the cone\/ $\mathcal E_\kappa^+(X)$ is strongly complete and the strong topology on\/ $\mathcal E_\kappa^+(X)$ is finer than the\/ {\rm(}induced\/{\rm)} vague topology on\/ $\mathcal E_\kappa^+(X)$.\end{theorem}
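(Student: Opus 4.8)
The plan is to reduce both claims to one auxiliary fact: \emph{every strongly bounded subset\/ $\mathcal F$ of\/ $\mathcal E_\kappa^+(X)$ is vaguely relatively compact, and every vague cluster point of a net in\/ $\mathcal F$ again lies in\/ $\mathcal E_\kappa^+(X)$.} Granting this, both the strong completeness of $\mathcal E_\kappa^+(X)$ and the comparison of topologies follow formally from Definition~\ref{def-perf} together with the fact that the strong topology --- being induced by the inner product $\kappa(\cdot,\cdot)$ --- is Hausdorff and metrizable.

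To prove the auxiliary fact I would start from the capacitary mass estimate
\[\mu(K)\leqslant\sqrt{c_\kappa(K)}\,\|\mu\|_\kappa\qquad\text{for every compact }K\subset X\text{ and every }\mu\in\mathcal E_\kappa^+(X),\]
valid because $\mu|_K\in\mathcal E_\kappa^+(K)$ with $\|\mu|_K\|_\kappa\leqslant\|\mu\|_\kappa$ (all cross terms in the splitting of $\kappa(\mu,\mu)$ along $K$ and $K^c$ being $\geqslant0$ since $\kappa\geqslant0$; when $X$ is compact one takes $K=X$ directly) and because $\mu|_K/\mu(K)$ is an admissible competitor for the infimum in (\ref{cap-def}); here $c_\kappa(K)<\infty$ by vague compactness of the unit measures on $K$ combined with lower semicontinuity and strict positive-definiteness of the energy, and the bound is trivial when $\mu(K)=0$. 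If now $M:=\sup_{\mu\in\mathcal F}\|\mu\|_\kappa<\infty$, then $\sup_{\mu\in\mathcal F}\mu(K)\leqslant\sqrt{c_\kappa(K)}\,M<\infty$ for every compact $K$, so $\mathcal F$ is vaguely relatively compact by the classical vague-compactness criterion for subsets of $\mathfrak M^+(X)$ \cite{B2,E2}. Lower semicontinuity of the energy on $\mathfrak M^+(X)$ (the two-variable analogue of Lemma~\ref{lemma-semi}, cf.\ \cite{F1}) then shows that any vague cluster point $\mu_0$ of a net in $\mathcal F$ satisfies $\kappa(\mu_0,\mu_0)\leqslant M^2$, i.e.\ $\mu_0\in\mathcal E_\kappa^+(X)$.

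Strong completeness is now immediate: a strong Cauchy sequence $(\mu_k)$ in $\mathcal E_\kappa^+(X)$ is strongly bounded, hence possesses, by the auxiliary fact, a vague cluster point $\mu_0\in\mathcal E_\kappa^+(X)$, and the definition of a perfect kernel forces $\|\mu_k-\mu_0\|_\kappa\to0$. For the second claim, since the strong topology is metrizable it suffices to show that strong convergence implies vague convergence on $\mathcal E_\kappa^+(X)$. Let $\mu_k\to\mu$ strongly. Then $(\mu_k)$ is strong Cauchy, hence strongly bounded and (by the auxiliary fact) vaguely relatively compact; if $\mu_0$ is any of its vague cluster points, perfectness gives $\mu_k\to\mu_0$ strongly, whence $\mu_0=\mu$ by uniqueness of strong limits. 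Thus $\mu$ is the unique vague cluster point of $(\mu_k)$, a sequence lying in a vaguely compact subset of the Hausdorff space $\mathfrak M(X)$; hence $\mu_k\to\mu$ vaguely.

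I expect the genuine content to reside entirely in the auxiliary fact --- specifically in converting the energy bound into a uniform bound on the mass of compact sets via the capacity, and then invoking the general vague-compactness criterion --- the two appeals to the definition of \emph{perfect} being purely formal. A minor point worth flagging is that ``perfect'' is stated in terms of sequences, which is precisely why metrizability of the strong topology is used in the second half, so that sequential reasoning there is legitimate.
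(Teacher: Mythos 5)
Your argument is correct: the capacitary mass bound $\mu(K)\leqslant\sqrt{c_\kappa(K)}\,\|\mu\|_\kappa$ together with the finiteness of $c_\kappa(K)$ for compact $K$, the resulting vague relative compactness of strongly bounded subsets of $\mathcal E_\kappa^+(X)$, and the vague lower semicontinuity of the energy are precisely what is needed to invoke Definition~\ref{def-perf}, and your metrizability remark correctly legitimizes the sequential reasoning in both halves. The paper itself gives no proof of Theorem~\ref{fu-complete} (it quotes the result from \cite{F1}), and your route is essentially Fuglede's original argument, so nothing further is needed.
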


\begin{remark}\label{rem-net} When speaking of the vague topology, one has to consider {\it nets\/} or {\it filters\/}
in $\mathfrak M(X)$ instead of sequences since the vague topology in
general does not satisfy the first axiom of countability. We follow
Moore and Smith's theory of convergence \cite{MS}, based on the concept of
nets (see also \cite[Chapter~2]{K} and \cite[Chapter~0]{E2}). However, if $X$ is metrizable and countable at infinity, then $\mathfrak M^+(X)$ satisfies the first axiom of countability, and the use of nets may be avoided. Indeed, if $\varrho(\cdot,\cdot)$ denotes a metric on $X$, then a countable base $(\mathcal V_k)_{k\in\mathbb N}$ of vague neighborhoods of a measure $\mu_0\in\mathfrak M^+(X)$ can be obtained for example as follows after choosing a countable dense sequence $\{x_k\}_{k\in\mathbb N}$ of points of $X$:
\[\mathcal V_k=\Bigl\{\mu\in\mathfrak M^+(X): \ \int\bigl(1-k\varrho(x_k,x)\bigr)^+\,d|\mu-\mu_0|(x)<1/k\Bigr\}.\]
(The existence of such $\{x_k\}_{k\in\mathbb N}$ for $X$ in question is ensured by \cite[Chapter~IX, Section~2, n$^\circ$\,8, Proposition~12]{B1'} and \cite[Chapter~IX, Section~2, n$^\circ$\,9, Corollary to Proposition~16]{B1'}.)
\end{remark}

\begin{remark}\label{remma}In contrast to Theorem~\ref{fu-complete}, for a perfect kernel $\kappa$ the whole pre-Hil\-bert space $\mathcal E_\kappa(X)$ is in general strongly {\it incomplete\/}, and this is the case even for the $\alpha$-Riesz kernel of order $\alpha\in(1,n)$ on $\mathbb R^n$, $n\geqslant 3$
(see~\cite{Car} and \cite[Theorem~1.19]{L}). Compare with \cite[Theorem~1]{ZUmzh} where the strong completeness has been established for the metric subspace of all ({\it signed\/}) $\nu\in\mathcal E_{\kappa_\alpha}(\mathbb R^n)$, $\alpha\in(0,n)$, such that $\nu^+$ and $\nu^-$ are supported by closed nonintersecting sets  $F_1,F_2\subset\mathbb R^n$, respectively. This result from \cite{ZUmzh} was proved with the aid of Deny's theorem~\cite{D1} stating that $\mathcal E_{\kappa_\alpha}(\mathbb R^n)$ can be completed by making use of tempered distributions on $\mathbb R^n$ with finite energy, defined in terms of Fourier transforms.\end{remark}

\begin{remark}\label{remark}The concept of perfect kernel is an efficient tool in minimum energy problems
over classes of {\it positive scalar\/} Radon measures on $X$ with
finite energy. Indeed, if $Q\subset X$ is closed, $c_\kappa(Q)\in(0,+\infty)$, and $\kappa$ is perfect, then the problem (\ref{cap-def}) has a unique solution $\lambda$ \cite[Theorem~4.1]{F1}; we shall call such $\lambda$ the ({\it inner\/}) {\it $\kappa$-capacitary measure\/} on $Q$.
Later the concept of perfectness has been shown to be efficient in minimum energy problems over classes of {\it vector measures\/} associated with a standard condenser $\mathbf A$ in $X$ (see \mbox{\cite{ZPot1}--\cite{ZPot3}}, where $\kappa$ and $\mathbf A$ were assumed to satisfy (\ref{eq-intr})). See Remark~\ref{rem-intr} above for some details of the approach developed in \mbox{\cite{ZPot1}--\cite{ZPot3}}; compare with the above-mentioned \cite[Theorem~1]{ZUmzh} where condition (\ref{eq-intr}) has not been required. See also  Remarks~\ref{r-3} and~\ref{rem-3} below for a short survey of the results obtained in \mbox{\cite{ZPot1}--\cite{ZPot3}}.\end{remark}

\section{Vector measures associated with a generalized condenser}\label{sec-cond}

\subsection{Generalized condensers} Fix a finite set $I$ of indices $i\in\mathbb N$ and an ordered collection $\mathbf A:=(A_i)_{i\in I}$ of nonempty closed sets $A_i\subset X$, $X$ being a locally compact space, where each $A_i$, $i\in I$, has the sign $s_i:={\rm sign}\,A_i=\pm1$ prescribed. Let $I^+$ consist of all $i\in I$ such that $s_i=+1$, and $I^-:=I\setminus I^+$. The sets $A_i$, $i\in I^+$, and $A_j$, $j\in I^-$, are termed the {\it positive\/} and the {\it negative\/} plates of the collection $\mathbf A$.
Write
\[A^+:=\bigcup_{i\in I^+}\,A_i,\quad A^-:=\bigcup_{j\in I^-}\,A_j,\quad A:=A^+\cup A^-,\quad\delta_{\mathbf A}:=A^+\cap A^-.\]

\begin{definition}\label{def-cond} $\mathbf A=(A_i)_{i\in I}$ is said to be a {\it standard condenser\/} in $X$ if $\delta_{\mathbf A}=\varnothing$.
\end{definition}

Note that any two equally signed plates of a standard condenser may intersect each other over a set of nonzero capacity (or even coincide).

Fix a (strictly positive definite) kernel $\kappa$ on $X$. By relaxing in the above definition the requirement $\delta_{\mathbf A}=\varnothing$, we slightly generalize the notion of standard condenser as follows.

\begin{definition}\label{def-cond-st} $\mathbf A=(A_i)_{i\in I}$ is said to be a {\it generalized condenser\/} in $X$ if $c_\kappa(\delta_{\mathbf A})=0$.\footnote{Gonchar and Rakhmanov \cite{GR} seem to be the first to consider such a generalization.}
\end{definition}

A (generalized) condenser $\mathbf A$ is said to be {\it compact\/} if all the $A_i$, $i\in I$, are compact, and {\it noncompact\/} if at least one of the $A_i$, $i\in I$, is noncompact.

In Examples~\ref{ex-1} and \ref{ex-1'} below, $X=\mathbb R^n$ with $n\geqslant3$. Let $B(x,r)$, respectively $\overline{B}(x,r)$, denote the open, respectively closed, $n$-dimensional ball of radius $r$ centered at $x\in\mathbb R^n$. We shall also write $S(x,r):=\partial_{\mathbb R^n}B(x,r)$.

\begin{example}\label{ex-1}Let $I^+:=\{1\}$ and $I^-:=\{2,3,4\}$. Define $A_1:=\overline{B}(\xi_1,1)$, $A_2:=\overline{B}(\xi_2,1)$, $A_3:=\overline{B}(\xi_3,2)$ and $A_4:=\overline{B}(\xi_4,1)$ where $\xi_1=(0,0,\ldots,0)$, $\xi_2=(2,0,\ldots,0)$, $\xi_3=(3,0,\ldots,0)$ and $\xi_4=(-2,0,\ldots,0)$. Since $\delta_{\mathbf A}$ consists of the points $\xi_5=(-1,0,\ldots,0)$ and $\xi_6=(1,0,\ldots,0)$, $\mathbf A=(A_i)_{i\in I}$ forms a generalized condenser in $\mathbb R^n$ for any (strictly positive definite) kernel $\kappa$ on $\mathbb R^n$ with the property that $\kappa(x,y)=\infty$ whenever $x=y$.\end{example}

See Example \ref{ex-2} for kernels and constraints under which the constrained minimum energy problem (Problem~\ref{pr2}) for such a condenser
admits a solution (has no short-circuit) despite the two touching points for the oppositely charged plates.

\begin{figure}[htbp]
\begin{center}
\includegraphics[width=4.5in]{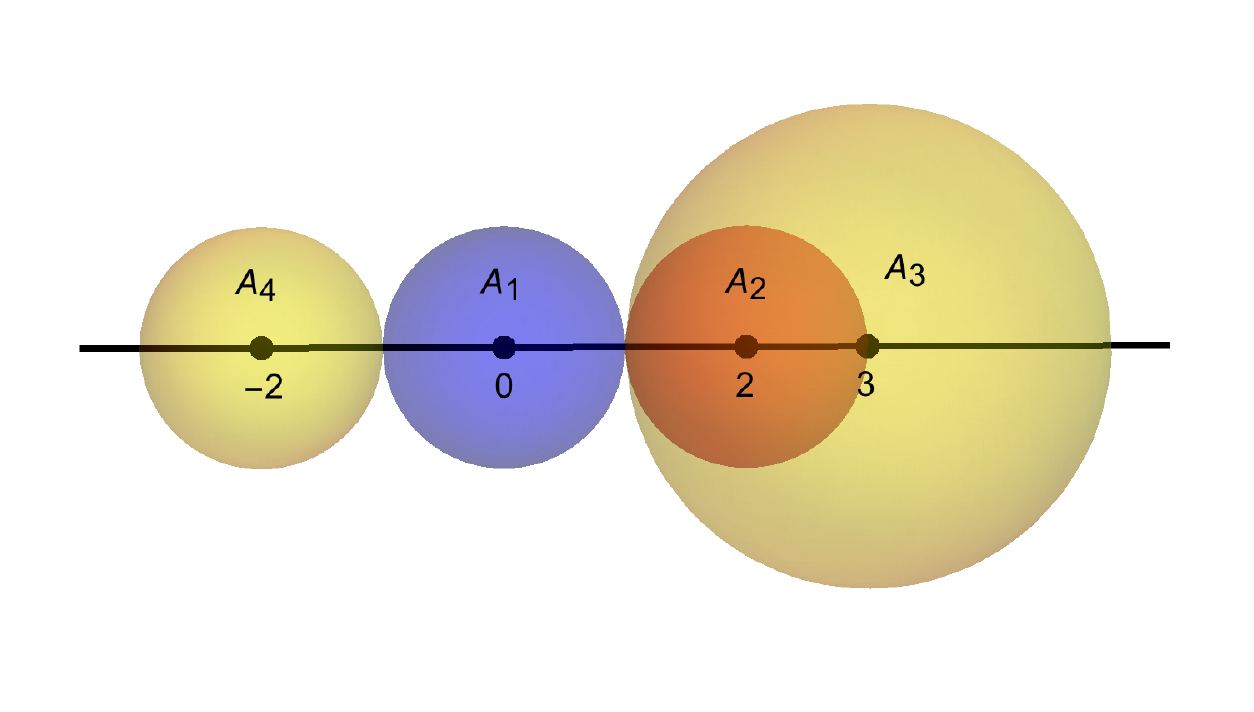}
\vspace{-.3in}
\caption{Generalized condenser of Example~\ref{ex-1}}
\label{fig3.3}
\end{center}
\end{figure}

\begin{example}\label{ex-1'} Assume that $n=3$, $I^+:=\{1\}$, $I^-:=\{2\}$, and let
\begin{align*}
A_1&:=\bigl\{x\in\mathbb R^3: \ 1\leqslant x_1<\infty, \ x_2^2+x_3^2=\exp(-2x_1^{r_1})\bigr\},\\
A_2&:=\bigl\{x\in\mathbb R^3: \ 2\leqslant x_1<\infty, \ x_2^2+x_3^2=\exp(-2x_1^{r_2})\bigr\}\end{align*}
where $1<r_1<r_2<\infty$. Then $A_1$ and $A_2$ form a standard condenser in $\mathbb R^3$ such that
\[{\rm dist}\,(A_1,A_2):=\inf_{x\in A_1, \ y\in A_2}\,|x-y|=0.\]
\end{example}

See Example \ref{ex-2'} for a kernel and constraints under which the constrained minimum energy problem (Problem~\ref{pr2}) for such a condenser admits a solution (has no short-circuit) despite the touching point at infinity.

\begin{figure}[htbp]
\begin{center}
\includegraphics[width=4.5in]{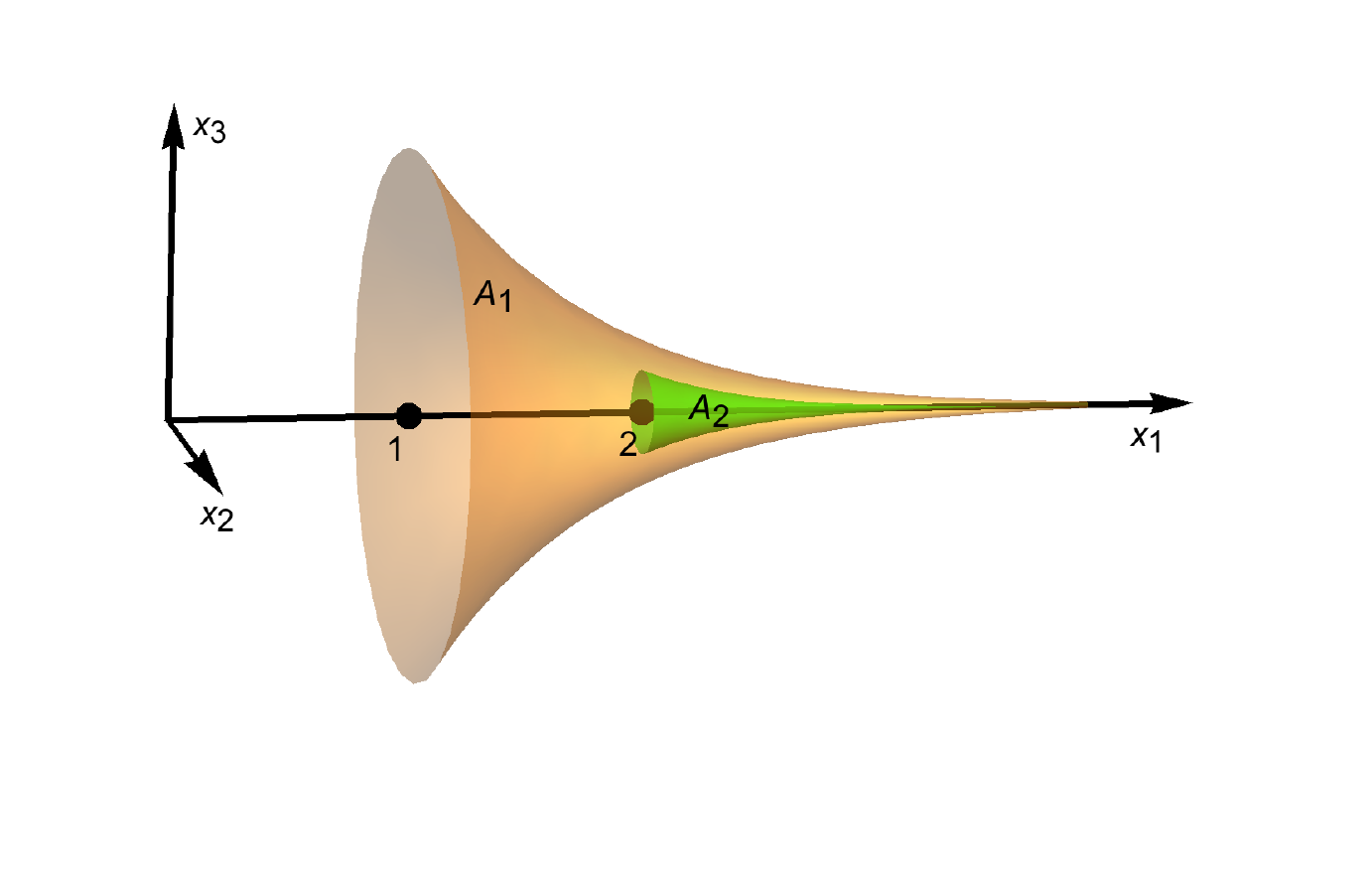}
\vspace*{-.5in}
\caption{Generalized condenser of Example~\ref{ex-1'}}
\label{fig3.3'}
\end{center}
\end{figure}

\subsection{Vector measures associated with a condenser $\mathbf A$. Vague topology} In the rest of the paper, fix a generalized condenser $\mathbf A=(A_i)_{i\in I}$ in $X$, and let $\mathfrak M^+(\mathbf A)$ stand for the Cartesian product $\prod_{i\in I}\,\mathfrak M^+(A_i)$. Then $\boldsymbol{\mu}\in\mathfrak M^+(\mathbf A)$ is a positive {\it vector measure\/} $(\mu^i)_{i\in I}$ with the components $\mu^i\in\mathfrak M^+(A_i)$; such $\boldsymbol{\mu}$ is said to be {\it associated\/} with~$\mathbf A$.

\begin{definition}\label{def-vague}
The {\it vague topology\/} on $\mathfrak M^+(\mathbf A)$ is the topology of the product space $\prod_{i\in I}\,\mathfrak M^+(A_i)$ where each of the factors $\mathfrak M^+(A_i)$, $i\in I$, is endowed with the vague topology induced from $\mathfrak M(X)$. Namely, a net $(\boldsymbol\mu_s)_{s\in S}\subset\mathfrak M^+(\mathbf A)$ converges to $\boldsymbol\mu$ {\it vaguely\/} if for every $i\in I$, $\mu_s^i\to\mu^i$ vaguely in $\mathfrak M(X)$ when $s$ increases along~$S$.\end{definition}

As all the $A_i$, $i\in I$, are closed in $X$, $\mathfrak M^+(\mathbf A)$ is vaguely closed in $\mathfrak M^+(X)^{|I|}$ where $|I|:={\rm Card}\,I$. Furthermore, since every $\mathfrak M^+(A_i)$ is Hausdorff in the vague topology, so is $\mathfrak M^+(\mathbf A)$ \cite[Chapter~3, Theorem~5]{K}. Hence, a va\-gue limit of any net in $\mathfrak M^+(\mathbf A)$ belongs to $\mathfrak M^+(\mathbf A)$ and is unique (provided that the vague limit exists).

Given $\boldsymbol{\mu}\in\mathfrak M^+(\mathbf A)$ and a vector-valued function
$\boldsymbol{u}=(u_i)_{i\in I}$ with $u_i:X\to[-\infty,\infty]$ such that each $\int u_i\,d\mu^i$ as well as their sum over $i$ exist (as finite numbers or $\pm\infty$), write
\begin{equation}\label{ul}\langle\boldsymbol{u},\boldsymbol{\mu}\rangle:=\sum_{i\in I}\,\langle u_i,\mu^i\rangle=\sum_{i\in I}\,\int u_i\,d\mu^i.\end{equation}

Let $\mathcal E^+_\kappa(\mathbf A)$ consist of all $\boldsymbol{\mu}\in\mathfrak M^+(\mathbf A)$ such that $\kappa(\mu^i,\mu^i)<\infty$ for all $i\in I$; in other words, $\mathcal E^+_\kappa(\mathbf A):=\prod_{i\in I}\,\mathcal E^+_\kappa(A_i)$.
For any $\boldsymbol{\mu}\in\mathcal E^+_\kappa(\mathbf A)$, $\mu^i(\delta_{\mathbf A})=0$ by \cite[Lemma~2.3.1]{F1}. Hence each of the $i$-components $\mu^i$, $i\in I$, of $\boldsymbol{\mu}\in\mathcal E^+_\kappa(\mathbf A)$ is carried by $A_i^\delta$, where
\begin{equation}\label{delta}A_i^\delta:=A_i\setminus\delta_{\mathbf A},\end{equation} though the support of $\mu^i$ may coincide with the whole $A_i$. We thus actually have
\[\mathcal E^+_\kappa(\mathbf A)=\prod_{i\in I}\,\mathcal E^+_\kappa(A_i^\delta).\]

Write
$A^+_\delta:=A^+\setminus\delta_{\mathbf A}$ and $A^-_\delta:=A^-\setminus\delta_{\mathbf A}$.
For any $\boldsymbol{\mu}\in\mathcal E^+_\kappa(\mathbf A)$ define
\begin{equation*}R\boldsymbol{\mu}:=R_{\mathbf A}\boldsymbol{\mu}:=\sum_{i\in I}\,s_i\mu^i,\end{equation*}
the 'resultant' of $\boldsymbol{\mu}$.
Since $A^+_\delta\cap A^-_\delta=\varnothing$, $R\boldsymbol{\mu}$ is a ({\it signed\/}) scalar Radon measure on $X$ whose positive and negative parts, carried respectively by $A^+_\delta$ and $A^-_\delta$, are given by
\begin{equation}\label{defR}(R\boldsymbol{\mu})^+:=\sum_{i\in I^+}\,\mu^i\text{ \ and \ }(R\boldsymbol{\mu})^-:=\sum_{i\in I^-}\,\mu^i.\end{equation}
If $\boldsymbol{\mu}=\boldsymbol{\mu}_1$ where $\boldsymbol{\mu},\boldsymbol{\mu}_1\in\mathcal E^+_\kappa(\mathbf A)$, then $R\boldsymbol{\mu}=R\boldsymbol{\mu}_1$, but not the other way around.

\begin{lemma}\label{inj} For the mapping\/ $\boldsymbol\mu\mapsto R\boldsymbol\mu$ to be injective it is necessary and sufficient that all the\/ $A_i$, $i\in I$, be mutually essentially disjoint, i.e.\ with\/
$c_\kappa(A_i\cap A_j)=0$ for all\/ $i\ne j$.
\end{lemma}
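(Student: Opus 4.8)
The statement is a biconditional, and the plan is to establish the two implications separately; both hinge on the single fact, recalled in the introduction, that a positive measure of finite $\kappa$-energy does not charge any set of zero $c_\kappa$-capacity.

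\emph{Sufficiency.} Assume $c_\kappa(A_i\cap A_j)=0$ whenever $i\ne j$, and put $\tilde A_i:=A_i\setminus\bigcup_{j\in I\setminus\{i\}}A_j$. The first step is to verify that each component $\mu^i$ of an arbitrary $\boldsymbol\mu\in\mathcal E^+_\kappa(\mathbf A)$ does not charge $A_i\setminus\tilde A_i=\bigcup_{j\ne i}(A_i\cap A_j)$: the latter is a finite union of closed sets of zero capacity, so the claim follows from the quoted fact together with finite additivity of $\mu^i$ --- this sidesteps any need for subadditivity of $c_\kappa$. Consequently $\mu^i$ is carried by $\tilde A_i$, and the sets $\tilde A_i$, $i\in I$, are pairwise disjoint by construction. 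Now let $\boldsymbol\mu_1,\boldsymbol\mu_2\in\mathcal E^+_\kappa(\mathbf A)$ satisfy $R\boldsymbol\mu_1=R\boldsymbol\mu_2$. By (\ref{defR}) and the uniqueness of the Hahn--Jordan decomposition we get $\sum_{i\in I^+}\mu_1^i=\sum_{i\in I^+}\mu_2^i$, and likewise over $I^-$. Restricting each of these scalar identities to $\tilde A_{i_0}$, and noting that every summand with $i\ne i_0$ is carried by a set disjoint from $\tilde A_{i_0}$, we are left with $\mu_1^{i_0}=\mu_2^{i_0}$ for each $i_0\in I$; thus $\boldsymbol\mu_1=\boldsymbol\mu_2$, so $R$ is injective.

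\emph{Necessity.} I argue by contraposition: suppose $c_\kappa(A_{i_0}\cap A_{j_0})>0$ for some $i_0\ne j_0$. Since $A_{i_0}\cap A_{j_0}\subset\delta_{\mathbf A}$ whenever $s_{i_0}\ne s_{j_0}$, while $c_\kappa(\delta_{\mathbf A})=0$ (by the definition of a generalized condenser and monotonicity of $c_\kappa$), the plates must carry a common sign $s_{i_0}=s_{j_0}=:s$. Positivity of the capacity provides a nonzero $\omega\in\mathcal E^+_\kappa(A_{i_0}\cap A_{j_0},1)$; since $A_{i_0}\cap A_{j_0}$ is contained in both $A_{i_0}$ and $A_{j_0}$, the vector measure that puts all of $\omega$ in the $i_0$-slot (and zero elsewhere) and the one that puts it in the $j_0$-slot both belong to $\mathcal E^+_\kappa(\mathbf A)=\prod_{i\in I}\mathcal E^+_\kappa(A_i)$, are evidently distinct, and share the common resultant $s\omega$. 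Hence $R$ is not injective.

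The only genuinely delicate point is the opening step of the sufficiency part: one must be sure that $A_i\cap A_j$ is measurable (it is, being closed) and that the implication ``finite energy $\Rightarrow$ zero mass on every capacity-zero set'' is at hand for arbitrary, possibly unbounded, measures on a general locally compact $X$, which is exactly the fact from the introduction (cf.\ \cite{F1}). The remainder is routine manipulation of the resultant map $R$.
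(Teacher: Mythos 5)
Your proof is correct and takes essentially the same route as the paper: sufficiency rests on the fact that positive measures of finite energy do not charge sets of zero capacity (you merely make explicit the restriction argument via the pairwise disjoint sets $\tilde A_i$), and necessity is shown by producing two distinct vector measures with equal resultants from a nonzero finite-energy measure $\omega$ on an overlap $A_{i_0}\cap A_{j_0}$ of two equally signed plates. Your construction (putting $\omega$ in the $i_0$-slot versus the $j_0$-slot, zero elsewhere) is a slightly cleaner variant of the paper's mass-transfer argument, and your observation that the offending pair must be equally signed, since oppositely signed plates meet only inside $\delta_{\mathbf A}$, is the step the paper leaves implicit.
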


\begin{proof} Since a nonzero positive scalar measure of finite energy does not charge any set of zero capacity \cite[Lemma~2.3.1]{F1}, the sufficiency part of the lemma is obvious. To prove the necessity part, assume on the contrary that there are two equally signed plates $A_k$ and $A_\ell$, $k\ne\ell$, with $c_\kappa(A_k\cap A_\ell)>0$. By \cite[Lemma~2.3.1]{F1}, there exists a nonzero positive scalar measure $\tau\in\mathcal E_\kappa^+(A_k\cap A_\ell)$. Choose $\boldsymbol\mu=(\mu^i)_{i\in I}\in\mathcal E^+_\kappa(\mathbf A)$ such that $\mu^k|_{A_k\cap A_\ell}-\tau\geqslant0$, and define $\boldsymbol\mu_m=(\mu_m^i)_{i\in I}\in\mathcal E^+_\kappa(\mathbf A)$, $m=1,2$, where $\mu_1^k=\mu^k-\tau$ and $\mu_1^i=\mu^i$ for all $i\ne k$, while $\mu_2^\ell=\mu^\ell+\tau$ and $\mu_2^i=\mu^i$ for all $i\ne\ell$. Then $R\boldsymbol\mu_1=R\boldsymbol\mu_2$, but $\boldsymbol\mu_1\ne\boldsymbol\mu_2$.\end{proof}

We call $\boldsymbol{\mu},\boldsymbol{\mu}_1\in\mathcal E^+_\kappa(\mathbf A)$ {\it$R$-equivalent\/} if $R\boldsymbol{\mu}=R\boldsymbol{\mu}_1$. For $\boldsymbol{\mu}\in\mathcal E^+_\kappa(\mathbf A)$, let $[\boldsymbol{\mu}]$ consist of all $\boldsymbol{\mu}_1\in\mathcal E^+_\kappa(\mathbf A)$ that are $R$-equivalent to $\boldsymbol{\mu}$. Note that $\boldsymbol{\mu}=\boldsymbol{0}$ is the only element of $[\boldsymbol{0}]$.

\subsection{A semimetric structure on classes of vector measures}\label{sec-Metric}
To avoid trivialities, for a given (generalized) condenser $\mathbf A=(A_i)_{i\in I}$ and a given (strictly positive definite) kernel $\kappa$ on a locally compact space $X$ we shall always require that
\begin{equation}\label{cnon0}c_\kappa(A_i)>0\quad\text{for all \ }i\in I.\end{equation}
In accordance with an electrostatic interpretation of a condenser, we say that the interaction between the components $\mu^i$, $i\in I$, of $\boldsymbol\mu\in\mathcal E^+_\kappa(\mathbf A)$ is characterized by the matrix $(s_is_j)_{i,j\in I}$, where $s_i:={\rm sign}\,A_i$.
Given $\boldsymbol{\mu},\boldsymbol{\mu}_1\in\mathcal E^+_\kappa(\mathbf A)$, we define the {\it mutual energy\/}
\begin{equation}\label{env}\kappa(\boldsymbol{\mu},\boldsymbol{\mu}_1):=\sum_{i,j\in I}\,s_is_j\kappa(\mu^i,\mu_1^j)\end{equation}
and the {\it vector potential\/} $\kappa_{\boldsymbol{\mu}}(\cdot)$ as a vector-valued function on $X$ with the components
\begin{equation}\label{potv}\kappa_{\boldsymbol{\mu}}^i(\cdot):=\sum_{j\in I}\,s_is_j\kappa(\cdot,\mu^j),\quad i\in I.\end{equation}

\begin{lemma}\label{l-Rpot} For any\/ $\boldsymbol{\mu}\in\mathcal E_\kappa^+(\mathbf A)$, the\/ $\kappa_{\boldsymbol{\mu}}^i(\cdot)$, $i\in I$, are well defined and finite n.e.\ on\/~$X$.\end{lemma}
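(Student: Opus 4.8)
The plan is to show that each of the finitely many scalar potentials $\kappa(\cdot,\mu^j)$, $j\in I$, entering the defining formula (\ref{potv}) is finite off a set of capacity zero, and then to take the union of these exceptional sets.

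Fix $\boldsymbol\mu=(\mu^i)_{i\in I}\in\mathcal E^+_\kappa(\mathbf A)$. For each $j\in I$ we have $\mu^j\in\mathcal E^+_\kappa(A_j)\subset\mathcal E^+_\kappa(X)$, and since $\mu^j\in\mathfrak M^+(X)$ the potential $\kappa(\cdot,\mu^j)$ is defined everywhere on $X$, is l.s.c.\ and bounded from below (as recalled in Section~\ref{sec-pr}, via Lemma~\ref{lemma-semi}); consequently $N_j:=\{x\in X:\kappa(x,\mu^j)=+\infty\}$ is a Borel set (a $G_\delta$, by lower semicontinuity). The key step is to prove $c_\kappa(N_j)=0$. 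Suppose, to the contrary, that $c_\kappa(N_j)>0$. By the definition (\ref{cap-def}) of inner capacity this forces $\mathcal E^+_\kappa(N_j,1)\ne\varnothing$; choose $\tau\in\mathcal E^+_\kappa(N_j,1)$, so that $\kappa(\tau,\tau)<\infty$ and, $\tau$ being bounded and carried by $N_j$, it is $N_j^c$-negligible. Integrating the nonnegative l.s.c.\ function $\kappa(\cdot,\mu^j)$ against $\tau$ and using Tonelli's theorem, we obtain $\kappa(\mu^j,\tau)=\int\kappa(x,\mu^j)\,d\tau(x)=+\infty$, since $\tau(N_j)=1>0$ while $\kappa(\cdot,\mu^j)\equiv+\infty$ on $N_j$. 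This contradicts the classical fact that the mutual energy of two positive measures of finite energy is finite.

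That classical fact can be either quoted from \cite{F1} or proved directly as follows: writing $\rho:=\mu^j\wedge\tau$ for the infimum measure, $\nu':=\mu^j-\rho$ and $\tau':=\tau-\rho$, one has $\nu'\perp\tau'$ while each of $\rho,\nu',\tau'$ has finite energy (being dominated by $\mu^j$ or by $\tau$); expanding $\kappa(\mu^j,\tau)=\kappa(\nu'+\rho,\tau'+\rho)$ bilinearly, the three terms containing $\rho$ are bounded by $\max\{\kappa(\mu^j,\mu^j),\kappa(\tau,\tau)\}$ (using $\kappa\geqslant0$, or that $\kappa$ is bounded below when $X$ is compact, together with the fact that the relevant Radon--Nikodym densities are $\leqslant1$), whereas $\kappa(\nu',\tau')\leqslant\tfrac12\bigl(\kappa(\nu',\nu')+\kappa(\tau',\tau')\bigr)$ follows from strict positive definiteness applied to the signed measure $\nu'-\tau'$, whose energy is defined because the self-energies of $\nu'$ and $\tau'$ are finite. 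Once $c_\kappa(N_j)=0$ is known for every $j\in I$, put $N:=\bigcup_{j\in I}N_j$; since $I$ is finite, $c_\kappa(N)=0$ by subadditivity of inner capacity. For each $x\in X\setminus N$ every $\kappa(x,\mu^j)$, $j\in I$, is a finite nonnegative real number, so the finite sum in (\ref{potv}) defining $\kappa^i_{\boldsymbol\mu}(x)$ contains no indeterminate form $\infty-\infty$ and represents a finite real number; hence $\kappa^i_{\boldsymbol\mu}$ is well defined and finite on $X\setminus N$, i.e.\ n.e.\ on $X$, for every $i\in I$.

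The only genuinely nontrivial ingredient is the step $c_\kappa(N_j)=0$, and within it the finiteness of the mutual energy of two positive measures of finite energy; the remainder is routine manipulation of the definitions of inner capacity, of carried and of bounded measures, of the vector potential, and an application of Tonelli's theorem to nonnegative l.s.c.\ integrands.
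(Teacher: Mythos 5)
Your argument is correct, but it takes a more self-contained route than the paper, whose proof is essentially a citation: the finiteness n.e.\ of each scalar potential $\kappa(\cdot,\mu^i)$ is quoted directly from \cite[p.~164]{F1}, the exceptional sets are observed to be universally measurable (being the sets where a l.s.c.\ function is infinite), and the finite union is then handled by the subadditivity of inner capacity on universally measurable sets \cite[Lemma~2.3.5]{F1}. What you do differently is to reprove the quoted fact from first principles: you obtain $c_\kappa(N_j)=0$ by testing against a measure $\tau\in\mathcal E^+_\kappa(N_j,1)$ and invoking the finiteness of the mutual energy $\kappa(\mu^j,\tau)$, which you in turn derive from strict positive definiteness applied to $\nu'-\tau'$ after splitting off the common part $\varrho=\mu^j\wedge\tau$; this is legitimate within the paper's conventions, since the finiteness of the two self-energies makes the energy of $\nu'-\tau'$ well defined (possibly $-\infty$), so positive definiteness forces the mutual energy to be finite --- essentially the standard derivation of the Cauchy--Schwarz inequality for positive definite kernels. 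The trade-off is length versus transparency: your version needs nothing beyond the definitions, while the paper's is a three-line appeal to \cite{F1}. Two small points you should make explicit. First, inner capacity is in general subadditive only on universally measurable sets, which is precisely why the paper records the measurability of the exceptional sets before invoking \cite[Lemma~2.3.5]{F1}; you do note that each $N_j$ is a $G_\delta$, so the hypothesis is available, but your closing appeal to subadditivity should say so. Second, when $X$ is compact the potential $\kappa(\cdot,\mu^j)$ is only bounded below rather than nonnegative, which suffices for your Tonelli step but should be worded accordingly.
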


\begin{proof}Since $\mu^i\in\mathcal E_\kappa^+(X)$ for every $i\in I$, $\kappa(\cdot,\mu^i)$ is finite n.e.\ on $X$ \cite[p.~164]{F1}. Furthermore, the set of all $x\in X$ with $\kappa(x,\mu^i)=\infty$ is universally measurable, for $\kappa(\cdot,\mu^i)$ is l.s.c.\ on $X$. Combined with the fact that the inner capacity $c_\kappa(\cdot)$ is subadditive on universally measurable sets \cite[Lemma~2.3.5]{F1}, this proves the lemma.\end{proof}

\begin{lemma}\label{l-Ren} For any\/
$\boldsymbol{\mu},\boldsymbol{\mu}_1\in\mathcal E^+_\kappa(\mathbf A)$ we have
\begin{equation}\label{Re}\kappa(\boldsymbol{\mu},\boldsymbol{\mu}_1)=\kappa(R\boldsymbol{\mu},R\boldsymbol{\mu}_1)\in(-\infty,\infty).\end{equation}\end{lemma}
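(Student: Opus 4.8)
The plan is to reduce the identity to the bilinearity of the inner product on the pre-Hilbert space $\mathcal E_\kappa(X)$, the actual work being to verify that every scalar measure entering the computation has finite energy, so that no term in the expansion is an indeterminate $\infty-\infty$.

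First I would record finiteness of all the pieces on the right of (\ref{env}). Since $\boldsymbol\mu,\boldsymbol\mu_1\in\mathcal E^+_\kappa(\mathbf A)$, each component $\mu^i$ and each $\mu_1^j$ lies in $\mathcal E^+_\kappa(X)$; hence by the Cauchy--Schwarz inequality in $\mathcal E_\kappa(X)$ (valid since $\kappa$ is strictly positive definite) each mutual energy $\kappa(\mu^i,\mu_1^j)$ is a finite real number. Consequently $\kappa(\boldsymbol\mu,\boldsymbol\mu_1)=\sum_{i,j\in I}s_is_j\kappa(\mu^i,\mu_1^j)$ is a finite sum of finite numbers, so the claimed inclusion in $(-\infty,\infty)$ holds independently of the equality.

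Next I would check that $R\boldsymbol\mu,R\boldsymbol\mu_1\in\mathcal E_\kappa(X)$. By (\ref{defR}) the Hahn--Jordan parts of $R\boldsymbol\mu$ are $(R\boldsymbol\mu)^+=\sum_{i\in I^+}\mu^i$ and $(R\boldsymbol\mu)^-=\sum_{i\in I^-}\mu^i$ — these are mutually singular, being carried by the disjoint sets $A^+_\delta$ and $A^-_\delta$ — and each of the energies $\kappa((R\boldsymbol\mu)^+,(R\boldsymbol\mu)^+)$, $\kappa((R\boldsymbol\mu)^-,(R\boldsymbol\mu)^-)$, $\kappa((R\boldsymbol\mu)^+,(R\boldsymbol\mu)^-)$ expands, by additivity of the integral over the finitely many positive measures $\mu^i$, into a finite sum of the finite numbers $\kappa(\mu^i,\mu^j)$; hence $R\boldsymbol\mu\in\mathcal E_\kappa(X)$, and likewise $R\boldsymbol\mu_1$.

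Finally I would compute $\kappa(R\boldsymbol\mu,R\boldsymbol\mu_1)$ by writing each resultant as the difference of its positive and negative parts, expanding the four resulting mutual energies via (\ref{defR}), and interchanging the (finite) summations with the integrations. This interchange is the one point that formally requires care, but it is harmless here: after splitting into positive measures every term is a finite integral, and all partial sums are finite by the preceding steps, so there is no $\infty-\infty$ ambiguity. Collecting the signs, the $(i,j)$-term carries the factor $s_is_j$, which is $+1$ when $i,j$ lie in the same $I^\pm$ and $-1$ otherwise — exactly matching the signs produced by the Hahn--Jordan expansion — so the total equals $\sum_{i,j\in I}s_is_j\kappa(\mu^i,\mu_1^j)=\kappa(\boldsymbol\mu,\boldsymbol\mu_1)$, finite by the first step. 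This completes the proof.
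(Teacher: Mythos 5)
Your proposal is correct and follows essentially the same route as the paper, whose proof is simply the direct expansion via (\ref{defR}) and (\ref{env}); you have merely spelled out the routine finiteness checks (Cauchy--Schwarz for the mutual energies $\kappa(\mu^i,\mu_1^j)$ and the membership $R\boldsymbol\mu,R\boldsymbol\mu_1\in\mathcal E_\kappa(X)$) that the paper leaves implicit.
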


\begin{proof} This is obtained directly from relations (\ref{defR}) and (\ref{env}).\end{proof}

For $\boldsymbol{\mu}=\boldsymbol{\mu}_1\in\mathcal E_\kappa^+(\mathbf A)$ the mutual energy $\kappa(\boldsymbol{\mu},\boldsymbol{\mu}_1)$ becomes the {\it energy\/} $\kappa(\boldsymbol{\mu},\boldsymbol{\mu})$ of $\boldsymbol{\mu}$. By the strict positive definiteness of the kernel $\kappa$, we see from Lemma~\ref{l-Ren} that  $\kappa(\boldsymbol{\mu},\boldsymbol{\mu})$, $\boldsymbol{\mu}\in\mathcal E_\kappa^+(\mathbf A)$, is always ${}\geqslant0$, and it is zero only for $\boldsymbol{\mu}=\boldsymbol{0}$.

In order to introduce a (semi)metric structure on the cone $\mathcal E_\kappa^+(\mathbf A)$, we define
\begin{equation}\label{isom}\|\boldsymbol{\mu}-\boldsymbol{\mu}_1\|_{\mathcal E^+_\kappa(\mathbf A)}:=\|R\boldsymbol{\mu}-R\boldsymbol{\mu}_1\|_\kappa\text{ \ for all  \ }\boldsymbol{\mu},\boldsymbol{\mu}_1\in\mathcal E^+_\kappa(\mathbf A).\end{equation}
Based on (\ref{Re}), we see by straightforward calculation that, in fact,
\begin{equation}\label{metr-def}\|\boldsymbol{\mu}-\boldsymbol{\mu}_1\|^2_{\mathcal E^+_\kappa(\mathbf A)}=\sum_{i,j\in I}\,s_is_j\kappa(\mu^i-\mu_1^i,\mu^j-\mu_1^j).\end{equation}
On account of Lemma~\ref{inj}, we are thus led to the following conclusion.

\begin{theorem}\label{lemma:semimetric} $\mathcal E^+_\kappa(\mathbf A)$ is a semimetric space with
the semimetric defined by either of the\/ {\rm(}equivalent\/{\rm)} relations\/ {\rm(\ref{isom})} or\/ {\rm(\ref{metr-def})}, and this
space is isometric to its\/ $R$-image in\/ $\mathcal E_\kappa(X)$. The semimetric\/ $\|\boldsymbol\mu-\boldsymbol\mu_1\|_{\mathcal E^+_\kappa(\mathbf A)}$ is a metric on\/ $\mathcal E^+_\kappa(\mathbf A)$ if and only if all the\/ $A_i$, $i\in I$, are mutually essentially disjoint.
\end{theorem}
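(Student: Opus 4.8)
The plan is to deduce everything from the pre-Hilbert structure of $\mathcal E_\kappa(X)$ transported through the resultant map $R\colon\mathcal E^+_\kappa(\mathbf A)\to\mathcal E_\kappa(X)$. First I would record that $R\boldsymbol\mu\in\mathcal E_\kappa(X)$ for every $\boldsymbol\mu\in\mathcal E^+_\kappa(\mathbf A)$: by (\ref{defR}) the parts $(R\boldsymbol\mu)^\pm$ are finite sums of the positive measures $\mu^i\in\mathcal E^+_\kappa(X)$, and a finite sum of positive measures of finite energy again has finite energy, the mutual energies $\kappa(\mu^i,\mu^j)$ being finite by the Cauchy--Schwarz inequality in $\mathcal E_\kappa(X)$. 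Hence $R\boldsymbol\mu-R\boldsymbol\mu_1\in\mathcal E_\kappa(X)$ and the right-hand side of (\ref{isom}) is a well-defined finite number; expanding $\|R\boldsymbol\mu-R\boldsymbol\mu_1\|_\kappa^2=\kappa(R\boldsymbol\mu-R\boldsymbol\mu_1,R\boldsymbol\mu-R\boldsymbol\mu_1)$ by bilinearity and applying (\ref{Re}) of Lemma~\ref{l-Ren} termwise yields precisely (\ref{metr-def}), which establishes the asserted equivalence of the two defining relations.

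Next I would verify the semimetric axioms. Symmetry of $\|\boldsymbol\mu-\boldsymbol\mu_1\|_{\mathcal E^+_\kappa(\mathbf A)}$ and its vanishing when $\boldsymbol\mu=\boldsymbol\mu_1$ are immediate from (\ref{isom}); nonnegativity holds because $\|\cdot\|_\kappa$ is the norm of the pre-Hilbert space $\mathcal E_\kappa(X)$ (this is where strict positive definiteness of $\kappa$ enters); and the triangle inequality for $\|\boldsymbol\mu-\boldsymbol\mu_2\|_{\mathcal E^+_\kappa(\mathbf A)}$ is inherited from that for $\|\cdot\|_\kappa$ applied to the decomposition $R\boldsymbol\mu-R\boldsymbol\mu_2=(R\boldsymbol\mu-R\boldsymbol\mu_1)+(R\boldsymbol\mu_1-R\boldsymbol\mu_2)$, using once more the additivity of $R$. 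By the very definition (\ref{isom}), $R$ then maps $\mathcal E^+_\kappa(\mathbf A)$ distance-preservingly onto $R(\mathcal E^+_\kappa(\mathbf A))\subset\mathcal E_\kappa(X)$; that is, $\mathcal E^+_\kappa(\mathbf A)$ is isometric to its $R$-image, the latter being a genuine metric subspace of $\mathcal E_\kappa(X)$.

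It remains to characterize when the semimetric is a metric, i.e.\ when $\|\boldsymbol\mu-\boldsymbol\mu_1\|_{\mathcal E^+_\kappa(\mathbf A)}=0$ forces $\boldsymbol\mu=\boldsymbol\mu_1$. By (\ref{isom}) and the fact that $\|\cdot\|_\kappa$ is a genuine norm on $\mathcal E_\kappa(X)$, the vanishing of $\|\boldsymbol\mu-\boldsymbol\mu_1\|_{\mathcal E^+_\kappa(\mathbf A)}$ is equivalent to $R\boldsymbol\mu=R\boldsymbol\mu_1$; hence the semimetric is a metric precisely when $R$ is injective on $\mathcal E^+_\kappa(\mathbf A)$, which by Lemma~\ref{inj} holds if and only if the $A_i$, $i\in I$, are mutually essentially disjoint. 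The argument is not deep; the only point demanding a little care will be the check that $R\boldsymbol\mu-R\boldsymbol\mu_1$ genuinely lies in $\mathcal E_\kappa(X)$, so that every manipulation with $\|\cdot\|_\kappa$ and with (\ref{Re}) is legitimate — everything else is a transcription of the pre-Hilbert axioms through the additive, distance-preserving map $R$, together with the already-proven Lemma~\ref{inj}.
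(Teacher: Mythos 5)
Your proposal is correct and follows essentially the same route as the paper: the paper likewise obtains (\ref{metr-def}) from (\ref{isom}) by a straightforward bilinear expansion justified by Lemma~\ref{l-Ren} (which guarantees all the mutual energies involved are finite, so $R\boldsymbol\mu\in\mathcal E_\kappa(X)$), reads off the semimetric axioms and the isometry from the pre-Hilbert norm on $\mathcal E_\kappa(X)$ together with strict positive definiteness, and settles the metric criterion by invoking Lemma~\ref{inj}. Your extra care in checking that $R\boldsymbol\mu-R\boldsymbol\mu_1$ lies in $\mathcal E_\kappa(X)$ is exactly the point the paper delegates to Lemma~\ref{l-Ren}, so there is no gap.
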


Similar to the terminology in the pre-Hilbert space $\mathcal E_\kappa(X)$, we therefore call the topology of the semimetric space $\mathcal E^+_\kappa(\mathbf A)$ {\it strong\/}.
Now \cite[Theorem~1 and Corollary~1]{ZUmzh}, mentioned in Remark~\ref{remma} above, can be rewritten as follows.

\begin{theorem}\label{ZU}If\/ $\mathbf A=(A_1,A_2)$ is a standard condenser in\/ $\mathbb R^n$, $n\geqslant3$, with\/ $s_1s_2=-1$ and\/ $\kappa_\alpha$ is the Riesz kernel of an arbitrary order\/ $\alpha\in(0,n)$, then the metric space\/ $\mathcal E_{\kappa_\alpha}^+(\mathbf A)$ is strongly complete, and the strong topology on this space is finer than the vague topology.\end{theorem}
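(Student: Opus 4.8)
The plan is to deduce Theorem~\ref{ZU} directly from the isometry of Theorem~\ref{lemma:semimetric} together with the known scalar results \cite[Theorem~1 and Corollary~1]{ZUmzh}. The point is that for a two-plate standard condenser $\mathbf A=(A_1,A_2)$ with $s_1s_2=-1$ the sets $A_1$ and $A_2$ are closed and disjoint, hence trivially mutually essentially disjoint, so Theorem~\ref{lemma:semimetric} gives that $\|\cdot-\cdot\|_{\mathcal E^+_{\kappa_\alpha}(\mathbf A)}$ is a genuine metric and that $R$ is an isometry of $\mathcal E^+_{\kappa_\alpha}(\mathbf A)$ onto its image $R\bigl(\mathcal E^+_{\kappa_\alpha}(\mathbf A)\bigr)\subset\mathcal E_{\kappa_\alpha}(\mathbb R^n)$. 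Moreover, by \eqref{defR} the $R$-image consists precisely of those signed measures $\nu\in\mathcal E_{\kappa_\alpha}(\mathbb R^n)$ with $\nu^+$ supported by $A_1$ and $\nu^-$ by $A_2$ (when $s_1=+1$, $s_2=-1$; the other sign choice is symmetric). This is exactly the metric subspace shown to be strongly complete in \cite[Theorem~1]{ZUmzh}.

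First I would record the identification of $R\bigl(\mathcal E^+_{\kappa_\alpha}(\mathbf A)\bigr)$ with the space of \cite{ZUmzh}: given $\boldsymbol\mu=(\mu^1,\mu^2)$ associated with $\mathbf A$ and of finite energy, $R\boldsymbol\mu=\mu^1-\mu^2$ has Hahn--Jordan parts $(R\boldsymbol\mu)^+=\mu^1$, $(R\boldsymbol\mu)^-=\mu^2$ because $A_1\cap A_2=\varnothing$; conversely any $\nu\in\mathcal E_{\kappa_\alpha}(\mathbb R^n)$ with $S(\nu^+)\subset A_1$, $S(\nu^-)\subset A_2$ arises as $R\boldsymbol\mu$ for $\boldsymbol\mu=(\nu^+,\nu^-)$. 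Then a strong Cauchy sequence (or net) in $\mathcal E^+_{\kappa_\alpha}(\mathbf A)$ maps under the isometry $R$ to a strong Cauchy sequence in that subspace, which by \cite[Theorem~1]{ZUmzh} converges strongly to some $\nu$ in the subspace; pulling back through $R^{-1}$ (well defined on the image since $R$ is injective here) yields a strong limit in $\mathcal E^+_{\kappa_\alpha}(\mathbf A)$. This gives strong completeness.

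For the second assertion — that the strong topology is finer than the vague topology on $\mathcal E^+_{\kappa_\alpha}(\mathbf A)$ — I would again transport the statement through $R$. Strong convergence $\boldsymbol\mu_s\to\boldsymbol\mu$ in $\mathcal E^+_{\kappa_\alpha}(\mathbf A)$ means $R\boldsymbol\mu_s\to R\boldsymbol\mu$ strongly in $\mathcal E_{\kappa_\alpha}(\mathbb R^n)$; by \cite[Corollary~1]{ZUmzh} this implies $R\boldsymbol\mu_s\to R\boldsymbol\mu$ vaguely, i.e. $\langle\varphi,\mu_s^1-\mu_s^2\rangle\to\langle\varphi,\mu^1-\mu^2\rangle$ for every $\varphi\in C_0(\mathbb R^n)$. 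The remaining step is to recover the vague convergence of each component separately, $\mu_s^i\to\mu^i$. This is where a small argument is needed and is the only nontrivial point: from scalar completeness one also controls the masses, and one can test against $\varphi$ supported near $A_1$ only (respectively near $A_2$ only) using that $A_1,A_2$ are closed and disjoint, so locally one of the two components is absent; more robustly, a strong Cauchy sequence has $\sup_s\|\mu_s^i\|_{\kappa_\alpha}<\infty$, hence (by Theorem~\ref{fu-complete}, $\kappa_\alpha$ being perfect) each sequence $(\mu_s^i)$ is vaguely relatively compact, and any two vague cluster points $\mu^i,\tilde\mu^i$ satisfy $R$-equal limits $\mu^1-\mu^2=\tilde\mu^1-\tilde\mu^2$ with the plus/minus parts supported on the disjoint closed sets $A_1,A_2$, forcing $\mu^i=\tilde\mu^i$ by uniqueness of the Hahn--Jordan decomposition. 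Thus each $\mu_s^i$ has a unique vague cluster point and therefore converges vaguely, which is precisely vague convergence of $\boldsymbol\mu_s$ in $\mathfrak M^+(\mathbf A)$.

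The main obstacle, then, is not completeness — that is essentially a restatement of \cite[Theorem~1]{ZUmzh} via the isometry — but the passage from vague convergence of the resultants $R\boldsymbol\mu_s$ back to componentwise vague convergence; the argument above via mass bounds, vague relative compactness, and disjointness of the supports handles it, and I would write that step out carefully.
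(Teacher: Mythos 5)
Your route is the paper's route: the paper offers no independent argument for Theorem~\ref{ZU} but presents it precisely as a rewriting of \cite[Theorem~1 and Corollary~1]{ZUmzh} via the isometry $R$ of Theorem~\ref{lemma:semimetric}, and your identification of $R\bigl(\mathcal E^+_{\kappa_\alpha}(\mathbf A)\bigr)$ with the class of signed measures $\nu\in\mathcal E_{\kappa_\alpha}(\mathbb R^n)$ whose Hahn--Jordan parts are supported by the two disjoint closed plates, together with the transfer of completeness through $R^{-1}$, is exactly this reformulation. Your first device for recovering componentwise vague convergence is also sound: since $A_1,A_2$ are closed and disjoint, any $\varphi\in C_0(\mathbb R^n)$ splits as $\varphi_1+\varphi_2$ with $S(\varphi_1)\cap A_2=\varnothing$ and $S(\varphi_2)\cap A_1=\varnothing$, whence $\langle\varphi,\mu_s^1\rangle=\langle\varphi_1,R\boldsymbol\mu_s\rangle$, and vague convergence of the resultants yields vague convergence of each component.

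However, the alternative you label ``more robustly'' contains a genuine error: a strong Cauchy (or even strongly convergent) sequence in $\mathcal E^+_{\kappa_\alpha}(\mathbf A)$ need \emph{not} satisfy $\sup_s\|\mu_s^i\|_{\kappa_\alpha}<\infty$, because $\|R\boldsymbol\mu_s\|_{\kappa_\alpha}^2=\|\mu_s^1\|^2_{\kappa_\alpha}+\|\mu_s^2\|^2_{\kappa_\alpha}-2\kappa_\alpha(\mu_s^1,\mu_s^2)$ and the (nonnegative) mutual energy can be arbitrarily large. The paper's own Theorem~\ref{pr:nons} supplies a counterexample on a standard condenser: the capacitary measures $\boldsymbol\lambda_k=(\lambda_{k,r_{1,k}},\lambda_{k,r_{2,k}})$ satisfy $\|R\boldsymbol\lambda_k\|_{\kappa_2}^2=k^{-q}\to0$, so the sequence is strong Cauchy and converges strongly (and vaguely) to $\boldsymbol 0$, while the individual energies $\|\lambda_{k,r_{i,k}}\|^2_{\kappa_2}\sim k^2\to\infty$. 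So that branch of the argument should be deleted; keep the test-function/partition-of-unity step (written out as above), which is all that is needed beyond the quoted results of \cite{ZUmzh}.
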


Note that, under the assumptions of Theorem~\ref{ZU}, $\kappa_\alpha$ may be unbounded on $A_1\times A_2$ (compare with Remarks~\ref{rem-intr}, \ref{remark}, \ref{r-3}, and~\ref{rem-3}).

\section{Unconstrained $\mathbf{f}$-weighted minimum energy problem}\label{sec-Gauss}

For a (strictly positive definite) kernel $\kappa$ on $X$ and a (generalized) condenser $\mathbf A=(A_i)_{i\in I}$, we shall consider minimum energy problems with an external field over certain subclasses of $\mathcal E^+_\kappa(\mathbf A)$, to be defined below. Since the admissible  measures in those problems are of finite energy, there is no loss of generality in assuming that each $A_i$ coincides with its {\it $\kappa$-reduced kernel\/} \cite[p.~164]{L}, which consists of all $x\in A_i$ such that $c_\kappa(A_i\cap U_x)>0$ for every neighborhood $U_x$ of $x$ in~$X$.

Fix a vector-valued function $\mathbf{f}=(f_i)_{i\in I}$, where each $f_i:X\to[-\infty,\infty]$ is $\mu$-measurable for every $\mu\in\mathfrak M^+(X)$ and treated as an {\it external field\/} acting on the charges (measures) from $\mathcal E^+_\kappa(A_i)$. The {\it $\mathbf f$-weighted vector potential\/} and
the {\it $\mathbf f$-weighted energy\/} of $\boldsymbol{\mu}\in\mathcal E_\kappa^+(\mathbf A)$ are defined respectively by\footnote{$G_{\kappa,\mathbf{f}}(\mathbf{\cdot})$ is also known as the {\it Gauss functional\/} (see e.g.\ \cite{O}). Note that when defining $G_{\kappa,\mathbf{f}}(\mathbf{\cdot})$, we have used the notation (\ref{ul}).}
\begin{align}\label{wpot}\mathbf W_{\kappa,\mathbf{f}}^{\boldsymbol{\mu}}&:=\kappa_{\boldsymbol{\mu}}+\mathbf f,\\
\label{wen}G_{\kappa,\mathbf{f}}(\boldsymbol{\mu})&:=\kappa(\boldsymbol{\mu},\boldsymbol{\mu})+2\langle\mathbf{f},\boldsymbol{\mu}\rangle.\end{align}
Let $\mathcal E_{\kappa,\mathbf{f}}^+(\mathbf A)$ consist of all $\boldsymbol{\mu}\in\mathcal E_\kappa^+(\mathbf A)$ with finite $G_{\kappa,\mathbf{f}}(\boldsymbol{\mu})$ (equivalently, with finite $\langle\mathbf{f},\boldsymbol{\mu}\rangle$).

In this paper we shall tacitly assume that either Case~I or Case~II holds, where:\footnote{The notation $\Psi(X)$ has been introduced at the end of the first paragraph in Section~\ref{sec-pr}.}
\begin{itemize}
\item[\rm I.] {\it For every\/ $i\in I$, $f_i\in\Psi(X)$};
\item[\rm II.] {\it For every\/ $i\in I$, $f_i(x)=s_i\kappa(x,\zeta)$ where a\/ {\rm(}signed\/{\rm)} measure\/ $\zeta\in\mathcal E_\kappa(X)$ is given\/}.
\end{itemize}
For any $\boldsymbol{\mu}\in\mathcal E^+_{\kappa}(\mathbf A)$, $G_{\kappa,\mathbf f}(\boldsymbol{\mu})$ is then well defined.
Furthermore, if Case~II takes place then, by (\ref{defR}), (\ref{Re}) and (\ref{wen}),
\begin{align}\label{C2}G_{\kappa,\mathbf f}(\boldsymbol{\mu})&=\|R\boldsymbol{\mu}\|^2_\kappa+2\sum_{i\in I}\,s_i\kappa(\zeta,\mu^i)\\{}&=\|R\boldsymbol{\mu}\|^2_\kappa+2\kappa(\zeta,R\boldsymbol{\mu})=
\|R\boldsymbol{\mu}+\zeta\|^2_\kappa-\|\zeta\|_\kappa^2\notag\end{align}
and consequently
\begin{equation}\label{GII}-\infty<-\|\zeta\|_\kappa^2\leqslant G_{\kappa,\mathbf f}(\boldsymbol{\mu})<\infty\text{ \ for all \ }\boldsymbol{\mu}\in\mathcal E^+_\kappa(\mathbf A).\end{equation}

Also fix a numerical vector $\mathbf a=(a_i)_{i\in I}$ with $a_i>0$ and a vector-valued function $\mathbf{g}=(g_i)_{i\in I}$ where all the $g_i: X\to(0,\infty)$ are continuous and such that
\begin{equation}\label{infg}g_{i,\inf}:=\inf_{x\in X}\,g_i(x)>0.\end{equation}
Write
\[\mathfrak M^+(\mathbf A,\mathbf a,\mathbf g):=\bigl\{\boldsymbol{\mu}\in\mathfrak M^+(\mathbf A): \ \langle g_i,\mu^i\rangle=a_i\quad\text{for all \ }i\in I\bigr\},\]
\[\mathcal E^+_\kappa(\mathbf A,\mathbf a,\mathbf g):=\mathcal E^+_\kappa(\mathbf A)\cap\mathfrak M^+(\mathbf A,\mathbf a,\mathbf g).\]
Because of (\ref{infg}), we thus have
\begin{equation}\label{up-est}\mu^i(A_i)\leqslant a_ig_{i,\inf}^{-1}<\infty\text{ \ for all \ }\boldsymbol{\mu}\in\mathfrak M^+(\mathbf A,\mathbf a,\mathbf g).\end{equation}
Since any $\psi\in\Psi(X)$ is lower bounded if $X$ is compact, and it is ${}\geqslant0$ otherwise, we conclude in Case I from (\ref{up-est}) that there is $M_{\mathbf f}\in(0,\infty)$ such that
\begin{equation}\label{GI}G_{\kappa,\mathbf f}(\boldsymbol{\mu})\geqslant-M_{\mathbf f}>-\infty\text{ \ for all \ }\boldsymbol{\mu}\in\mathcal E^+_\kappa(\mathbf A,\mathbf a,\mathbf g).\end{equation}

Also denote
\[\mathcal E^+_{\kappa,\mathbf f}(\mathbf A,\mathbf a,\mathbf g):=\mathcal E^+_{\kappa,\mathbf f}(\mathbf A)\cap\mathfrak M^+(\mathbf A,\mathbf a,\mathbf g),\]
\[G_{\kappa,\mathbf f}(\mathbf A,\mathbf a,\mathbf g):=\inf_{\boldsymbol{\mu}\in\mathcal E^+_{\kappa,\mathbf f}(\mathbf A,\mathbf a,\mathbf g)}\,G_{\kappa,\mathbf f}(\boldsymbol{\mu}).\]
In either Case I or Case II,  we then get from (\ref{GII}) and (\ref{GI})
\begin{equation}\label{Gfb}G_{\kappa,\mathbf f}(\mathbf A,\mathbf a,\mathbf g)>-\infty.\end{equation}

If the class $\mathcal E^+_{\kappa,\mathbf f}(\mathbf A,\mathbf a,\mathbf g)$ is nonempty, or equivalently if
\begin{equation}\label{Gf}G_{\kappa,\mathbf f}(\mathbf A,\mathbf a,\mathbf g)<\infty,\end{equation}
then the following (unconstrained) $\mathbf{f}$-weighted minimum energy problem, also known as the {\it Gauss variational problem\/} \cite{Gauss,O}, makes sense.

\begin{problem}\label{pr1} Does there exist $\boldsymbol{\lambda}_{\mathbf A}\in\mathcal E^+_{\kappa,\mathbf f}(\mathbf A,\mathbf a,\mathbf g)$ with $G_{\kappa,\mathbf f}(\boldsymbol{\lambda}_{\mathbf A})=G_{\kappa,\mathbf f}(\mathbf A,\mathbf a,\mathbf g)${\rm?}\end{problem}

If $I^+=\{1\}$, $I^-=\varnothing$, $\mathbf g=\mathbf1$, $\mathbf a=\mathbf1$ and $\mathbf f=\mathbf0$, then Problem~\ref{pr1} reduces to the minimum energy problem (\ref{cap-def}) solved by \cite[Theorem~4.1]{F1} (see Remark~\ref{remark} above).

\begin{remark} An analysis similar to that for a standard condenser, cf.\ \cite[Lemma~6.2]{ZPot2}, shows that requirement (\ref{Gf}) is fulfilled if and only if $c_\kappa(\dot{A}_i^\delta)>0$ for every $i\in I$, where
\begin{equation}\label{circ}\dot{A}_i^\delta:=\bigl\{x\in A^\delta_i: \ |f_i(x)|<\infty\bigr\},\end{equation}
$A^\delta_i$ being defined by (\ref{delta}). By (\ref{cnon0}), this yields that (\ref{Gf}) holds automatically whenever Case~II takes place, for the potential of $\zeta\in\mathcal E_\kappa(X)$ is finite n.e.\ on $X$ \cite[p.~164]{F1}.\end{remark}

\begin{remark}\label{r-2}If $\mathbf A$ is a compact standard condenser, the kernel $\kappa$ is continuous on $A^+\times A^-$, and Case~I holds, then the solvability  of Problem~\ref{pr1} can easily be established by exploiting the vague topology only, since then $\mathfrak M^+(\mathbf A,\mathbf a,\mathbf g)$ is vaguely compact, while $G_{\kappa,\mathbf f}(\cdot)$ is va\-guely l.s.c.\ on $\mathcal E^+_{\kappa,\mathbf f}(\mathbf A)$ (see \cite[Theorem~2.30]{O}).\footnote{If $\kappa$ is (finitely) continuous on $X\times X$, then this result by Ohtsuka can be extended to a compact generalized condenser. Such a generalization is established with the aid of Lemma~\ref{lemma-rel-comp} in a way similar to that in the proof of Theorem~\ref{th-main-comp} (see below). Since none of the classical kernels is continuous for $x=y$, we shall not go into detail.} However, these arguments break down if any of the $A_i$ is noncompact in $X$, for then $\mathfrak M^+(\mathbf A,\mathbf a,\mathbf g)$ is no longer vaguely compact.\end{remark}

The purpose of the example below is to give an explicit formula for a solution to Problem\/~{\rm\ref{pr1}} with a particular choice of $X$, $\kappa$, $\mathbf A$, $\mathbf a$, $\mathbf g$, and $\mathbf f$. Write $S_r:=S(0,r)$.

\begin{example}\label{zu}Let $\kappa_2(x,y)=|x-y|^{2-n}$ be the Newtonian kernel on $\mathbb R^n$ with $n\geqslant3$, $I^+=\{1\}$, $I^-=\{2\}$, $\mathbf g=\mathbf1$, $\mathbf a=\mathbf1$, $\mathbf f=\mathbf0$, $A_1=S_{r_1}$, and $A_2=S_{r_2}$, where $0<r_1<r_2<\infty$. According to Remark~\ref{r-2}, a solution to Problem~\ref{pr1} exists. Let $\lambda_r$, $0<r<\infty$, denote the $\kappa_2$-capacitary measure on $S_r$ (see Remark~\ref{remark} above); then by symmetry $\lambda_r$ is the uniformly distributed unit mass over $S_r$. Based on well known properties of the Newtonian potential of $\lambda_r$ \cite[Chapter~II, Section~3, n$^\circ$\,13]{L} we have
\begin{equation}\label{value1}c_{\kappa_2}\bigl(S_r\bigr)=r^{n-2},\end{equation}
$\kappa_2(x,\lambda_r)=r^{2-n}$ for all $x\in\overline{B}(0,r)$ and $\kappa_2(x,\lambda_r)=R^{2-n}$ for all $x\in S_R$, $R>r$.
Thus
\begin{equation}\label{val3}\kappa_2(\cdot,\lambda_{r_1}-\lambda_{r_2})=\left\{
\begin{array}{lll} r_1^{2-n}-r_2^{2-n} & \mbox{on} & S_{r_1},\\ 0 & \mbox{on} & S_{r_2}.\\ \end{array} \right. \end{equation}
Application of \cite[Proposition~1(iv)]{Z5}, providing characteristic properties of solutions to Problem~\ref{pr1} for a standard condenser, shows that $\boldsymbol\lambda:=(\lambda_{r_1},\lambda_{r_2})$ solves Problem~\ref{pr1} with $X$, $\kappa$, $\mathbf A$, $\mathbf a$, $\mathbf g$, and $\mathbf f$, chosen above. Hence the corresponding minimum value $G_{\kappa,\mathbf f}(\mathbf A,\mathbf a,\mathbf g)$ equals $\kappa_2(\boldsymbol\lambda,\boldsymbol\lambda)$ and\footnote{$\lambda_{r_1}$ is in fact the solution to the problem (\ref{cap-def}) for $S_{r_1}$ relative to the classical Green kernel $G$ on $B(0,r_2)$,
while $c_G\bigl(S_{r_1}\bigr)=\bigl[r_1^{2-n}-r_2^{2-n}\bigr]^{-1}$. This follows from (\ref{val3}) and (\ref{value}) by \cite[Lemmas~3.4, 3.5]{DFHSZ2}.\label{foot-zu}}
\begin{equation}\label{value}\kappa_2(\boldsymbol\lambda,\boldsymbol\lambda)=\|\lambda_{r_1}-\lambda_{r_2}\|^2_{\kappa_2}=r_1^{2-n}-r_2^{2-n}.\end{equation}
\end{example}

\begin{remark}\label{r-3}Assume that $\mathbf A$ is still a standard condenser, though now, in contrast to Remark~\ref{r-2}, its plates may be noncompact in $X$. Under the assumption (\ref{eq-intr}), an approach has been worked out in \cite{ZPot2,ZPot3}, based on both the vague and the strong topologies on $\mathcal E_\kappa^+(\mathbf A)$, which made it possible to provide a fairly complete analysis of Problem~\ref{pr1}. In more detail, it has been shown that if the kernel $\kappa$ is perfect and all the $g_i|_{A_i}$, $i\in I$, are bounded, then in either Case~I or Case~II the requirement
\begin{equation}\label{r-suff}c_\kappa(A)<\infty\end{equation}
is sufficient for Problem~\ref{pr1} to be solvable for every vector $\mathbf a$ \cite[Theorem~8.1]{ZPot2}. However, if (\ref{r-suff}) does not hold then in general there exists a vector $\mathbf a'$ such that the problem admits no solution \cite{ZPot2}.\footnote{In the case of the $\alpha$-Riesz kernels of order $1<\alpha\leqslant2$ on $\mathbb R^3$, some of the (theoretical) results on the solvability or unsolvability of Problem~\ref{pr1} obtained in \cite{ZPot2} have been illustrated in \cite{HWZ,OWZ} by means of numerical experiments.} Therefore, it was interesting to give a description of the set of all vectors $\mathbf a$ for which Problem~\ref{pr1} is nevertheless solvable. Such a characterization has been established in \cite{ZPot3}. See also footnote~\ref{foot} below.\end{remark}

Unless explicitly stated otherwise, in all that
follows we do not assume (\ref{eq-intr}) to hold.
Then the results obtained in \cite{ZPot2,ZPot3} and the approach developed are no longer valid.
In particular, assumption (\ref{r-suff}) does not guarantee anymore that $G_{\kappa,\mathbf f}(\mathbf A,\mathbf a,\mathbf g)$ is attained among $\boldsymbol{\mu}\in\mathcal E^+_{\kappa,\mathbf f}(\mathbf A,\mathbf a,\mathbf g)$. This can be illustrated by the following assertion.

\begin{figure}[tbp]
\begin{center}
 \includegraphics[width=4.in]{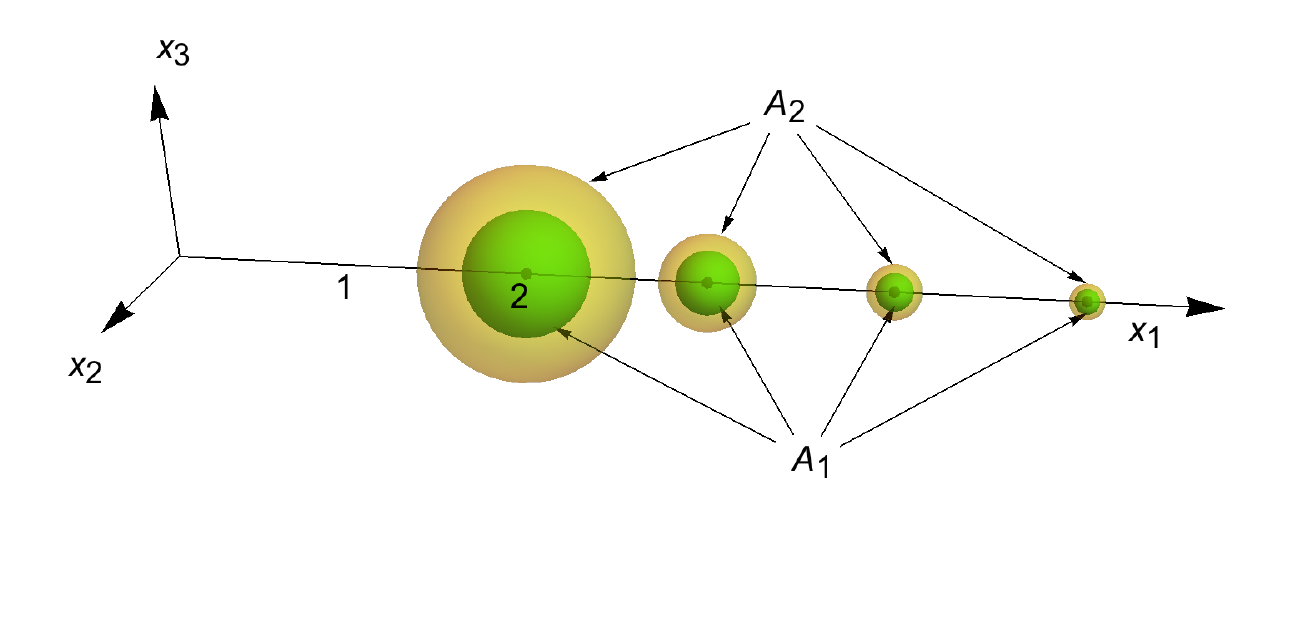}
\vspace{-.4in}
\caption{The condenser plates $A_1$ and $A_2$ from Theorem~\ref{pr:nons} with $n=3$.}
\label{FigThm4.6}
\end{center}
\end{figure}

\begin{theorem}\label{pr:nons}Let\/ $X=\mathbb R^n$ with\/ $n\geqslant3$, $I^+=\{1\}$, $I^-=\{2\}$, $\mathbf g=\mathbf 1$, $\mathbf a=\mathbf 1$,
$\mathbf f=\mathbf 0$,
\[A_1=\bigcup_{k\geqslant2}\,S(x_k,r_{1,k}),\quad A_2=\bigcup_{k\geqslant2}\,S(x_k,r_{2,k}),\]
where\/ $x_k=(k,0,\ldots,0)$, $r_{2,k}^{2-n}=k^2$ and\/ $r_{1,k}^{2-n}=k^2+k^{-q}$ with\/ $q\in(0,\infty)$, and let\/ $\kappa=\kappa_2$ be the Newtonian kernel. Then\/ $G_{\kappa,\mathbf f}(\mathbf A,\mathbf a,\mathbf g)$ equals\/ $0$ and hence cannot be an actual minimum.\end{theorem}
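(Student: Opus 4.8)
The plan is to exploit that, since $\mathbf f=\mathbf 0$ and $\mathbf g=\mathbf a=\mathbf 1$, the functional in question is purely an energy: for $\boldsymbol\mu=(\mu^1,\mu^2)$ with $\mu^1\in\mathcal E^+_{\kappa_2}(A_1)$, $\mu^2\in\mathcal E^+_{\kappa_2}(A_2)$ and $\mu^1(A_1)=\mu^2(A_2)=1$, formulas (\ref{wen}) and (\ref{Re}) give
\[G_{\kappa_2,\mathbf 0}(\boldsymbol\mu)=\kappa_2(\boldsymbol\mu,\boldsymbol\mu)=\|R\boldsymbol\mu\|_{\kappa_2}^2=\|\mu^1-\mu^2\|_{\kappa_2}^2.\]
Strict positive definiteness of $\kappa_2$ then yields $G_{\kappa_2,\mathbf 0}(\mathbf A,\mathbf a,\mathbf g)\geqslant0$ at once, so it remains to produce admissible pairs of arbitrarily small energy and, separately, to rule out a minimizer.

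For the upper bound I would concentrate the unit mass of each component on a single concentric pair of spheres far out: for $k\geqslant2$ let $\lambda_{1,k}$ (resp.\ $\lambda_{2,k}$) be the uniformly distributed unit mass on $S(x_k,r_{1,k})$ (resp.\ $S(x_k,r_{2,k})$), i.e.\ the $x_k$-translate of the measure $\lambda_{r_{1,k}}$ (resp.\ $\lambda_{r_{2,k}}$) of Example~\ref{zu}, which have finite Newtonian energy. Then $\boldsymbol\mu_k:=(\lambda_{1,k},\lambda_{2,k})$ lies in $\mathcal E^+_{\kappa_2}(\mathbf A,\mathbf a,\mathbf g)$ (so, in passing, (\ref{Gf}) holds and Problem~\ref{pr1} makes sense). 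Since $2-n<0$ and $r_{1,k}^{2-n}=k^2+k^{-q}>k^2=r_{2,k}^{2-n}$, and $t\mapsto t^{2-n}$ is strictly decreasing, the spheres are genuinely nested, $r_{1,k}<r_{2,k}$; hence by translation invariance of $\kappa_2$ together with the explicit value (\ref{value}) computed in Example~\ref{zu},
\[G_{\kappa_2,\mathbf 0}(\boldsymbol\mu_k)=\|\lambda_{1,k}-\lambda_{2,k}\|_{\kappa_2}^2=r_{1,k}^{2-n}-r_{2,k}^{2-n}=k^{-q}\longrightarrow0\quad(k\to\infty).\]
Therefore $G_{\kappa_2,\mathbf 0}(\mathbf A,\mathbf a,\mathbf g)=0$.

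Non-attainment I would prove by contradiction. If some $\boldsymbol\lambda=(\lambda^1,\lambda^2)\in\mathcal E^+_{\kappa_2}(\mathbf A,\mathbf a,\mathbf g)$ satisfied $G_{\kappa_2,\mathbf 0}(\boldsymbol\lambda)=0$, then $\|R\boldsymbol\lambda\|_{\kappa_2}^2=0$, so by strict positive definiteness $R\boldsymbol\lambda=\lambda^1-\lambda^2=0$, that is $\lambda^1=\lambda^2=:\nu$. Being of finite energy, $\lambda^1$ and $\lambda^2$ do not charge the set $\delta_{\mathbf A}$, which has zero capacity, by \cite[Lemma~2.3.1]{F1}; hence $\nu$ is carried both by $A_1^\delta=A_1\setminus\delta_{\mathbf A}$ and by $A_2^\delta=A_2\setminus\delta_{\mathbf A}$. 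Since $A_1^\delta\cap A_2^\delta=\varnothing$, this forces $\nu=0$, contradicting $\nu(X)=\lambda^1(A_1)=1$. Thus the infimum $0$ cannot be an actual minimum.

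The only genuinely substantive ingredient is the explicit energy evaluation borrowed from Example~\ref{zu}; the rest is bookkeeping. The point to keep in mind is the standing requirement that $\mathbf A$ be a (generalized) condenser, $c_{\kappa_2}(\delta_{\mathbf A})=0$, which here holds because the radii $r_{j,k}$ shrink to $0$ while the centres $x_k$ stay at mutual distance $\geqslant1$, so the concentric pairs are pairwise disjoint (indeed $\mathbf A$ is then standard). Conceptually the example exhibits the expected blow-up mechanism: even for a standard condenser one may have $\mathrm{dist}\,(A_1,A_2)=\inf_k(r_{2,k}-r_{1,k})=0$, and since the near-touching pairs recede to the point at infinity, the minimizing sequence $\boldsymbol\mu_k$ loses all of its mass at infinity and has no vague limit in the admissible class — precisely the phenomenon that the upper constraint $\boldsymbol\sigma$ introduced in the later sections is designed to prevent (cf.\ the discussion of Lemma~\ref{lemma-rel-cl} in the introduction).
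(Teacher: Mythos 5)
Your proposal is correct and follows essentially the same route as the paper: the upper bound via the uniformly distributed (capacitary) unit measures on the concentric spheres $S(x_k,r_{1,k})$, $S(x_k,r_{2,k})$ together with the energy value (\ref{value}) from Example~\ref{zu}, giving $G_{\kappa_2,\mathbf 0}(\boldsymbol\mu_k)=k^{-q}\to0$, and non-attainment from strict positive definiteness of $\kappa_2$. Your argument merely spells out in detail what the paper compresses into the single appeal to strict positive definiteness (namely, that $R\boldsymbol\lambda=0$ would force $\lambda^1=\lambda^2$, impossible for unit measures carried by the essentially disjoint sets $A_1^\delta$ and $A_2^\delta$), so no further comparison is needed.
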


\begin{proof}Note that $\mathbf A=(A_1,A_2)$ forms a standard condenser in $\mathbb R^n$ such that (\ref{eq-intr}) fails to hold. Let $\lambda_{k,r}$, $0<r<\infty$, denote the $\kappa_2$-capacitary measure on $S(x_k,r)$ (see Remark~\ref{remark}). Then $\boldsymbol\lambda_k=(\lambda_{k,r_{1,k}},\lambda_{k,r_{2,k}})$, $k\in\mathbb N$, are admissible for Problem~\ref{pr1}, and therefore, by (\ref{Re}),
\[0\leqslant G_{\kappa_2,\mathbf f}(\mathbf A,\mathbf a,\mathbf g)\leqslant\kappa_2(\boldsymbol\lambda_k,\boldsymbol\lambda_k)=\|\lambda_{k,r_{1,k}}-\lambda_{k,r_{2,k}}\|^2_{\kappa_2}, \ k\in\mathbb N.\]
According to (\ref{value}), the right-hand side equals
\[r_{1,k}^{2-n}-r_{2,k}^{2-n}=k^{-q},\]
and hence it tends to $0$ as $k\to\infty$. This yields that $G_{\kappa_2,\mathbf f}(\mathbf A,\mathbf a,\mathbf g)=0$. By the strict positive definiteness of $\kappa_2$, $G_{\kappa_2,\mathbf f}(\mathbf A,\mathbf a,\mathbf g)$ cannot therefore be an actual minimum, though $c_{\kappa_2}(A)<\infty$, which is clear from (\ref{value1}) by the countable subadditivity of the inner capacity on the universally measurable sets \cite[Lemma~2.3.5]{F1}.\end{proof}

Using the electrostatic interpretation, which is possible for the Coulomb kernel $|x-y|^{-1}$ on $\mathbb R^3$, we say that a short-circuit occurs between the oppositely charged plates $A_1$ and $A_2$ from Theorem~\ref{pr:nons}, which touch each other at the point at infinity. This certainly may also happen for a generalized condenser (see Definition~\ref{def-cond-st}).
Therefore, it is meaningful to ask what kinds of additional requirements on the vector measures under consideration will prevent this phenomenon, and secure that a solution to the corresponding $\mathbf{f}$-weighted minimum energy problem does exist.
The idea below is to impose such an upper constraint on the measures from $\mathfrak M^+(\mathbf A,\mathbf a,\mathbf g)$ which would prevent the blow-up effect.

\section{Constrained $\mathbf{f}$-weighted minimum energy problem}\label{sec-Gauss-c}

\subsection{Statement of the problem} Unless stated otherwise, in all that follows $\kappa$, $\mathbf A$, $\mathbf a$, $\mathbf g$, and $\mathbf f$ are as indicated at the beginning of the preceding section. Let $\mathfrak C(\mathbf A)$ consist of all $\boldsymbol{\sigma}=(\sigma^i)_{i\in I}\in\mathfrak M^+(\mathbf A)$ such that $\langle g_i,\sigma^i\rangle>a_i$ and\footnote{Recall that $S(\mu)=S^\mu_X$ denotes the support of $\mu\in\mathfrak M(X)$. For the notation $\langle g_i,\sigma^i\rangle$  see (\ref{ul}), noting that the $g_i$, $i\in I$, are continuous on $X$ and~${}>0$.}
\begin{equation}\label{g-mass}S^{\sigma^i}_{X}=A_i\text{ \ for all \ }i\in I.\end{equation}
These $\boldsymbol{\sigma}$ will serve as {\it constraints\/} for $\boldsymbol{\mu}\in\mathfrak M^+(\mathbf A,\mathbf a,\mathbf g)$.
Given $\boldsymbol{\sigma}\in\mathfrak C(\mathbf A)$, write
\[\mathfrak M^{\boldsymbol{\sigma}}(\mathbf A):=\bigl\{\boldsymbol{\mu}\in\mathfrak M^+(\mathbf A): \ \mu^i\leqslant\sigma^i\quad\text{for all \ }i\in I\bigr\},\]
where $\mu^i\leqslant\sigma^i$ means that $\sigma^i-\mu^i$ is a positive scalar Radon measure on $X$, and
\begin{align*}
\mathfrak M^{\boldsymbol{\sigma}}(\mathbf A,\mathbf a,\mathbf g)&:=\mathfrak M^{\boldsymbol{\sigma}}(\mathbf A)\cap\mathfrak M^+(\mathbf A,\mathbf a,\mathbf g),\\
\mathcal E^{\boldsymbol{\sigma}}_{\kappa,\mathbf f}(\mathbf A,\mathbf a,\mathbf g)&:=\mathcal E^+_{\kappa,\mathbf f}(\mathbf A)\cap\mathfrak M^{\boldsymbol{\sigma}}(\mathbf A,\mathbf a,\mathbf g).\end{align*}
Note that $\mathfrak M^{\boldsymbol{\sigma}}(\mathbf A)$ along with $\mathfrak M^+(\mathbf A)$ is vaguely closed, for so is $\mathfrak M^+(X)$.

Since $\mathcal E^{\boldsymbol{\sigma}}_{\kappa,\mathbf f}(\mathbf A,\mathbf a,\mathbf g)\subset\mathcal E^+_{\kappa,\mathbf f}(\mathbf A,\mathbf a,\mathbf g)$, we get from (\ref{Gfb})
\begin{equation}\label{-finite}-\infty<G_{\kappa,\mathbf f}(\mathbf A,\mathbf a,\mathbf g)\leqslant G^{\boldsymbol{\sigma}}_{\kappa,\mathbf f}(\mathbf A,\mathbf a,\mathbf g):=\inf_{\boldsymbol{\mu}\in\mathcal E^{\boldsymbol{\sigma}}_{\kappa,\mathbf f}(\mathbf A,\mathbf a,\mathbf g)}\,G_{\kappa,\mathbf f}(\boldsymbol{\mu})\leqslant\infty.\end{equation}
Unless explicitly stated otherwise, in all that
follows we assume that the class $\mathcal E^{\boldsymbol{\sigma}}_{\kappa,\mathbf f}(\mathbf A,\mathbf a,\mathbf g)$ is nonempty, or equivalently
\begin{equation}\label{gauss-finite}G_{\kappa,\mathbf f}^{\boldsymbol{\sigma}}(\mathbf A,\mathbf a,\mathbf g)<\infty.\end{equation}
Then the following constrained $\mathbf{f}$-weighted minimum energy problem, also known as the {\it constrained Gauss variational problem\/}, makes sense.

\begin{problem}\label{pr2}Given $\boldsymbol{\sigma}\in\mathfrak C(\mathbf A)$, does there exist $\boldsymbol{\lambda}_{\mathbf A}^{\boldsymbol{\sigma}}\in\mathcal E^{\boldsymbol{\sigma}}_{\kappa,\mathbf f}(\mathbf A,\mathbf a,\mathbf g)$ with \[G_{\kappa,\mathbf f}(\boldsymbol{\lambda}^{\boldsymbol{\sigma}}_{\mathbf A})=G_{\kappa,\mathbf f}^{\boldsymbol{\sigma}}(\mathbf A,\mathbf a,\mathbf g){\rm?}\]\end{problem}

Note that assumption (\ref{g-mass}) in fact causes no restriction on the objects in question since, if it does not hold, then Problem~\ref{pr2} reduces to the same problem for the generalized condenser $\mathbf F:=\bigl(S_X^{\sigma^i}\bigr)_{i\in I}$ in place of $\mathbf A=(A_i)_{i\in I}$, because $\mathcal E^{\boldsymbol{\sigma}}_{\kappa,\mathbf f}(\mathbf F,\mathbf a,\mathbf g)=\mathcal E^{\boldsymbol{\sigma}}_{\kappa,\mathbf f}(\mathbf A,\mathbf a,\mathbf g)$.

\begin{lemma}\label{suff-fin}For\/ {\rm(\ref{gauss-finite})} to hold, it is sufficient that for every\/ $i\in I$, $\langle g_i,\sigma^i|_{\dot{A}_i^\delta}\rangle>a_i$ where\/ $\dot{A}_i^\delta$ is defined by\/ {\rm(\ref{circ})}, and in addition $\kappa(\sigma^i|_K,\sigma^i|_K)<\infty$ for any compact\/ $K\subset\dot{A}_i^\delta$.
\end{lemma}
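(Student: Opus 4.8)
The plan is to construct, for each $i\in I$, an admissible measure $\boldsymbol\mu\in\mathcal E^{\boldsymbol\sigma}_{\kappa,\mathbf f}(\mathbf A,\mathbf a,\mathbf g)$ by taking suitable traces of the $\sigma^i$ on compact subsets of $\dot A_i^\delta$ and then rescaling them to meet the mass constraint $\langle g_i,\mu^i\rangle=a_i$. The point of the hypotheses is exactly that: the condition $\kappa(\sigma^i|_K,\sigma^i|_K)<\infty$ for compact $K\subset\dot A_i^\delta$ forces the candidate to have finite energy, while $\langle g_i,\sigma^i|_{\dot A_i^\delta}\rangle>a_i$ gives enough available $\boldsymbol\sigma$-mass, supported where $f_i$ is finite, to realize the prescribed value $a_i$ without exceeding the constraint $\mu^i\le\sigma^i$.

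First I would fix $i\in I$ and exhaust $\dot A_i^\delta$ by an increasing sequence of compact sets $K_{i,1}\subset K_{i,2}\subset\cdots$ (possible since $X$ is locally compact; here one may also intersect with the sets $\{f_i\ge -m\}$ in Case~I, or simply use that $\dot A_i^\delta\subset A_i^\delta$ and $f_i$ is finite there). By the inner-regularity of $\sigma^i$ and monotone convergence, $\langle g_i,\sigma^i|_{K_{i,m}}\rangle\nearrow\langle g_i,\sigma^i|_{\dot A_i^\delta}\rangle>a_i$, so for $m$ large enough $\langle g_i,\sigma^i|_{K_{i,m}}\rangle>a_i$; fix such an $m=m(i)$ and write $\tau^i:=\sigma^i|_{K_{i,m(i)}}$. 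Then $\tau^i\le\sigma^i$, $\kappa(\tau^i,\tau^i)<\infty$ by hypothesis, and $0<a_i<\langle g_i,\tau^i\rangle<\infty$ (finiteness of $\langle g_i,\tau^i\rangle$ because $g_i$ is continuous hence bounded on the compact $K_{i,m(i)}$ and $\tau^i$ is bounded). Set $t_i:=a_i/\langle g_i,\tau^i\rangle\in(0,1)$ and $\mu^i:=t_i\,\tau^i$; then $\mu^i\le\tau^i\le\sigma^i$, $\langle g_i,\mu^i\rangle=a_i$, $\kappa(\mu^i,\mu^i)=t_i^2\kappa(\tau^i,\tau^i)<\infty$, and $S(\mu^i)\subset K_{i,m(i)}\subset\dot A_i^\delta$, so $|f_i|<\infty$ on $S(\mu^i)$.

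It then remains to check that $\boldsymbol\mu:=(\mu^i)_{i\in I}$ lies in $\mathcal E^{\boldsymbol\sigma}_{\kappa,\mathbf f}(\mathbf A,\mathbf a,\mathbf g)$, i.e.\ that $\langle\mathbf f,\boldsymbol\mu\rangle$ is finite (equivalently $G_{\kappa,\mathbf f}(\boldsymbol\mu)<\infty$). In Case~II this is automatic from \eqref{GII}. In Case~I, each $f_i\in\Psi(X)$ is lower bounded on $S(\mu^i)$ (being l.s.c.\ on the compact $S(\mu^i)$), and I would argue the upper bound as follows: $f_i$ is finite $\mu^i$-a.e.\ since it is finite everywhere on $S(\mu^i)$; but l.s.c.\ and finite on a compact set need not be bounded above, so instead I would, at the exhaustion step, additionally intersect $K_{i,m}$ with the closed sets $\{x:f_i(x)\le m\}$ — these are closed because $f_i$ is l.s.c. — so that by construction $f_i$ is bounded on each $K_{i,m(i)}$, whence $\langle f_i,\mu^i\rangle$ is finite; one must only recheck that $\langle g_i,\sigma^i|_{\{f_i\le m\}\cap\dot A_i^\delta}\rangle\nearrow\langle g_i,\sigma^i|_{\dot A_i^\delta}\rangle$ as $m\to\infty$, which again follows from monotone convergence since $f_i<\infty$ on $\dot A_i^\delta$. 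Summing the finite quantities $\langle f_i,\mu^i\rangle$ over the finite index set $I$ gives $\langle\mathbf f,\boldsymbol\mu\rangle\in\mathbb R$, so $G_{\kappa,\mathbf f}(\boldsymbol\mu)<\infty$ and \eqref{gauss-finite} holds. The one genuinely delicate point is this interplay in Case~I between the l.s.c.\ external field and the compact exhaustion — one wants the truncations $\{f_i\le m\}$ to carry, in the limit, enough $g_i$-weighted $\sigma^i$-mass, and this is exactly guaranteed by finiteness of $f_i$ on all of $\dot A_i^\delta$ together with monotone convergence; everything else is routine.
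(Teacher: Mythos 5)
Your construction is correct and is essentially the paper's own proof: the paper likewise picks, for each $i$, a compact $K_i\subset\dot A_i^\delta$ with $\langle g_i,\sigma^i|_{K_i}\rangle>a_i$ and $|f_i|$ bounded on $K_i$, and takes $\mu^i:=a_i\,\sigma^i|_{K_i}/\langle g_i,\sigma^i|_{K_i}\rangle$. Your extra step of intersecting with the closed sets $\{f_i\leqslant m\}$ in Case~I is just a spelled-out justification of the boundedness of $f_i$ on $K_i$ that the paper asserts without detail, so the two arguments coincide in substance.
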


\begin{proof}Noting that the $\dot{A}_i^\delta$, $i\in I$, are universally measurable, we see that for every $i\in I$ there exists a compact set $K_i\subset\dot{A}_i^\delta$ such that $\langle g_i,\sigma^i|_{K_i}\rangle>a_i$ and $|f_i|\leqslant M_i<\infty$ on $K_i$. Then $\boldsymbol\mu\in\mathcal E^{\boldsymbol\sigma}_{\kappa,\mathbf f}(\mathbf A,\mathbf a,\mathbf g)$ with $\mu^i:=a_i\sigma^i|_{K_i}/\langle g_i,\sigma^i|_{K_i}\rangle$, $i\in I$, and (\ref{gauss-finite}) follows.\end{proof}

\begin{remark}\label{rem-3}Assume for a moment that (\ref{eq-intr}) holds. It has been shown by \cite[Theorem~6.2]{Z9} that if, in addition, the kernel $\kappa$ is perfect, all the
$g_i|_{A_i}$, $i\in I$, are bounded, and condition (\ref{r-suff}) is satisfied, then in both Cases~I and~II Problem~\ref{pr2} is solvable for any vector $\mathbf a$ and any constraint $\boldsymbol{\sigma}\in\mathfrak C(\mathbf A)$.\footnote{Actually, the results described in Remarks~\ref{r-3} and \ref{rem-3} have been obtained in \mbox{\cite{Z9}--\cite{ZPot3}} even for {\it infinite\/} dimensional vector measures.\label{foot}} But if requirement (\ref{eq-intr}) is omitted then the approach developed in \cite{Z9} breaks down, and the quoted result is no longer valid. This can be seen for $\kappa_2$ on $\mathbb R^n$, $n\geqslant3$, if we restrict ourselves to $\boldsymbol\mu\in\mathcal E^{\boldsymbol\sigma}_{\kappa_2,\mathbf f}(\mathbf A,\mathbf a,\mathbf g)$ with $\sigma^i:=\sum_{k\in\mathbb N}\,\lambda_{k,r_{i,k}}$, $i=1,2$, where $\mathbf A$, $\mathbf a$, $\mathbf g$, $\mathbf f$ and $\lambda_{k,r_{i,k}}$ are as chosen in Theorem~\ref{pr:nons} and its proof. Observe that these $\sigma^i$, $i=1,2$, are {\it unbounded\/}; compare with Theorem~\ref{th-main} below.\end{remark}

Let $\mathfrak S^{\boldsymbol{\sigma}}_{\kappa,\mathbf f}(\mathbf A,\mathbf a,\mathbf g)$ (possibly empty) consist of all the solutions to Problem~\ref{pr2}.

\begin{lemma}\label{lemma:unique:}
If\/ $\boldsymbol{\lambda}$ and\/ $\widehat{\boldsymbol{\lambda}}$ are two elements of\/ $\mathfrak S^{\boldsymbol{\sigma}}_{\kappa,\mathbf f}(\mathbf A,\mathbf a,\mathbf g)$ then\/
$\|\boldsymbol{\lambda}-\widehat{\boldsymbol{\lambda}}\|_{\mathcal
E^+_\kappa(\mathbf{A})}=0$, and hence\/ $\boldsymbol{\lambda}$ and\/ $\widehat{\boldsymbol{\lambda}}$ are\/ $R$-equivalent in\/ $\mathcal E_\kappa^+(\mathbf A)$. Thus a solution to Problem\/~{\rm{\ref{pr2}}} is unique\/ {\rm(}provided it exists\/{\rm)} whenever all the\/ $A_i$, $i\in I$, are mutually essentially disjoint.
\end{lemma}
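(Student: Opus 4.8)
The plan is to exploit the convexity of the admissible class together with the pre-Hilbert (parallelogram) structure transported from $\mathcal E_\kappa(X)$ via the isometry $R$. First I would observe that $\mathcal E^{\boldsymbol\sigma}_{\kappa,\mathbf f}(\mathbf A,\mathbf a,\mathbf g)$ is a convex subset of $\mathcal E^+_\kappa(\mathbf A)$: if $\boldsymbol\mu,\boldsymbol\nu$ lie in it then so does $t\boldsymbol\mu+(1-t)\boldsymbol\nu$ for $t\in[0,1]$, since the conditions $\mu^i\leqslant\sigma^i$, $\langle g_i,\mu^i\rangle=a_i$, and finiteness of $\langle\mathbf f,\boldsymbol\mu\rangle$ are all preserved under convex combinations (here one uses that $\mathbf f$ is, in Case~I, $\Psi(X)$-valued so $\langle\mathbf f,\cdot\rangle$ is affine on measures, and in Case~II it is $s_i\kappa(\cdot,\zeta)$, again affine).

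Next I would write the Gauss functional in the form that makes its convexity transparent. Using (\ref{isom}) and (\ref{C2})/(\ref{wen}), for $\boldsymbol\mu\in\mathcal E^{\boldsymbol\sigma}_{\kappa,\mathbf f}(\mathbf A,\mathbf a,\mathbf g)$ one has $G_{\kappa,\mathbf f}(\boldsymbol\mu)=\|R\boldsymbol\mu\|_\kappa^2+2\langle\mathbf f,\boldsymbol\mu\rangle$, where $\boldsymbol\mu\mapsto\|R\boldsymbol\mu\|_\kappa^2=\kappa(\boldsymbol\mu,\boldsymbol\mu)$ is a quadratic form arising from the positive semidefinite bilinear form $\kappa(\cdot,\cdot)$ (Lemma~\ref{l-Ren}), hence convex, and $\langle\mathbf f,\boldsymbol\mu\rangle$ is affine. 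The standard parallelogram computation then gives, for any $\boldsymbol\lambda,\widehat{\boldsymbol\lambda}$ in the admissible class,
\[
G_{\kappa,\mathbf f}(\boldsymbol\lambda)+G_{\kappa,\mathbf f}(\widehat{\boldsymbol\lambda})-2\,G_{\kappa,\mathbf f}\Bigl(\tfrac{\boldsymbol\lambda+\widehat{\boldsymbol\lambda}}{2}\Bigr)=\tfrac12\,\bigl\|R\boldsymbol\lambda-R\widehat{\boldsymbol\lambda}\bigr\|_\kappa^2=\tfrac12\,\|\boldsymbol\lambda-\widehat{\boldsymbol\lambda}\|^2_{\mathcal E^+_\kappa(\mathbf A)},
\]
using (\ref{isom}). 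If both $\boldsymbol\lambda$ and $\widehat{\boldsymbol\lambda}$ are minimizers, the left-hand side is $\leqslant G^{\boldsymbol\sigma}_{\kappa,\mathbf f}(\mathbf A,\mathbf a,\mathbf g)+G^{\boldsymbol\sigma}_{\kappa,\mathbf f}(\mathbf A,\mathbf a,\mathbf g)-2\,G^{\boldsymbol\sigma}_{\kappa,\mathbf f}(\mathbf A,\mathbf a,\mathbf g)=0$ because $(\boldsymbol\lambda+\widehat{\boldsymbol\lambda})/2$ is itself admissible by convexity; since the left-hand side is also manifestly $\geqslant0$, it vanishes, whence $\|\boldsymbol\lambda-\widehat{\boldsymbol\lambda}\|_{\mathcal E^+_\kappa(\mathbf A)}=0$. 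By Lemma~\ref{l-Ren} (or directly from (\ref{isom})), this forces $R\boldsymbol\lambda=R\widehat{\boldsymbol\lambda}$ in $\mathcal E_\kappa(X)$, i.e.\ $\boldsymbol\lambda$ and $\widehat{\boldsymbol\lambda}$ are $R$-equivalent. Finally, when all the $A_i$ are mutually essentially disjoint, Theorem~\ref{lemma:semimetric} says the semimetric is a metric, so $\|\boldsymbol\lambda-\widehat{\boldsymbol\lambda}\|_{\mathcal E^+_\kappa(\mathbf A)}=0$ gives $\boldsymbol\lambda=\widehat{\boldsymbol\lambda}$, proving uniqueness.

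I expect no serious obstacle here; the one point requiring a little care is checking that the midpoint $(\boldsymbol\lambda+\widehat{\boldsymbol\lambda})/2$ genuinely lies in $\mathcal E^{\boldsymbol\sigma}_{\kappa,\mathbf f}(\mathbf A,\mathbf a,\mathbf g)$ — in particular that it has finite energy and finite $\mathbf f$-weighted energy — but this is immediate since $\mathcal E^+_\kappa(\mathbf A)$ is a convex cone and $\langle\mathbf f,\cdot\rangle$ is affine and finite at both endpoints. One should also note that the argument only uses that $G^{\boldsymbol\sigma}_{\kappa,\mathbf f}(\mathbf A,\mathbf a,\mathbf g)$ is finite (guaranteed by (\ref{-finite}) and (\ref{gauss-finite})), so no completeness or vague-compactness input is needed — existence of minimizers is a separate matter handled later in the paper.
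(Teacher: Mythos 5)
Your proof is correct and follows essentially the same route as the paper: convexity of $\mathcal E^{\boldsymbol\sigma}_{\kappa,\mathbf f}(\mathbf A,\mathbf a,\mathbf g)$, the parallelogram identity in $\mathcal E_\kappa(X)$ transported through the isometry (\ref{isom}), and then Lemma~\ref{inj}/Theorem~\ref{lemma:semimetric} for the uniqueness statement when the plates are mutually essentially disjoint. Your cleaner second-difference formulation (quadratic part via parallelogram, affine $\mathbf f$-part cancelling) is just a repackaging of the paper's computation, so no further comment is needed.
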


\begin{proof} Since the class $\mathcal E^{\boldsymbol{\sigma}}_{\kappa,\mathbf f}(\mathbf A,\mathbf a,\mathbf g)$ is convex, we conclude from (\ref{wen}) and (\ref{Re})
that
\[4G_{\kappa,\mathbf f}^{\boldsymbol{\sigma}}(\mathbf A,\mathbf a,\mathbf g)\leqslant
4G_{\kappa,\mathbf f}\Bigl(\frac{\boldsymbol{\lambda}+\widehat{\boldsymbol{\lambda}}}{2}\Bigr)=
\|R\boldsymbol{\lambda}+R\widehat{\boldsymbol{\lambda}}\|_\kappa^2+
4\langle\mathbf{f},\boldsymbol{\lambda}+\widehat{\boldsymbol{\lambda}}\rangle.\]
On the other hand, applying the parallelogram identity in the
pre-Hilbert space $\mathcal E_\kappa(X)$ to $R\boldsymbol{\lambda}$
and $R\widehat{\boldsymbol{\lambda}}$ and then adding and
subtracting
$4\langle\mathbf{f},\boldsymbol{\lambda}+\widehat{\boldsymbol{\lambda}}\rangle$
we get
\[\|R\boldsymbol{\lambda}-R\widehat{\boldsymbol{\lambda}}\|_\kappa^2=
-\|R\boldsymbol{\lambda}+R\widehat{\boldsymbol{\lambda}}\|_\kappa^2-4\langle\mathbf{f},\boldsymbol{\lambda}+
\widehat{\boldsymbol{\lambda}}\rangle+2G_{\kappa,\mathbf{f}}(\boldsymbol{\lambda})+
2G_{\kappa,\mathbf{f}}(\widehat{\boldsymbol{\lambda}}).\] When combined
with the preceding relation, this yields
\[0\leqslant\|R\boldsymbol{\lambda}-R\widehat{\boldsymbol{\lambda}}\|^2_\kappa\leqslant-
4G_{\kappa,\mathbf{f}}^{\boldsymbol{\sigma}}(\mathbf{A},\mathbf{a},\mathbf{g})+2G_{\kappa,\mathbf{f}}(\boldsymbol{\lambda})+
2G_{\kappa,\mathbf{f}}(\widehat{\boldsymbol{\lambda}})=0,\] which
establishes the former assertion of the lemma because of (\ref{isom}). On account of Lemma~\ref{inj}, this completes the proof.\end{proof}

The following example shows that in general a solution to Problem~\ref{pr2} is not unique.

\begin{example}Let $\kappa_2$ be the Newtonian kernel on $\mathbb R^n$ with $n\geqslant3$, $I=I^+=\{1,2\}$, $\mathbf a=\mathbf 1$, $\mathbf f=\mathbf 0$, $\mathbf g=\mathbf 1$, and let $A_1=A_2$ be the $(n-1)$-dimensional unit sphere $S:=S(0,1)$. Let $\lambda$ denote the $\kappa_2$-capacitary measure on $S$; then $S(\lambda)=S$. Choose $\sigma^1=\sigma^2=3\lambda$. Then $\boldsymbol\lambda=(\lambda,\lambda)$ is obviously a solution to Problem~5.1. Choose compact disjoint sets $K_k\subset S$, $k=1,2$, so that $c_2(K_1)=c_2(K_2)>0$ and define $\lambda_k=\lambda|_{K_k}/2$. Then $\boldsymbol\lambda_k=(\lambda-\lambda_k,\lambda+\lambda_k)$, $k=1,2$, are admissible measures for Problem~\ref{pr2} with the data chosen above such that $R\boldsymbol\lambda_k=R\boldsymbol\lambda$, $k=1,2$, and hence $\kappa_2(\boldsymbol\lambda_k,\boldsymbol\lambda_k)=\kappa_2(\boldsymbol\lambda,\boldsymbol\lambda)$, $k=1,2$. Thus each of the non-equal, although $R$-equivalent vector measures $\boldsymbol\lambda$, $\boldsymbol\lambda_1$, and $\boldsymbol\lambda_2$ is a solution to Problem~\ref{pr2}.\end{example}

\subsection{Auxiliary results} In view on the definition of the vague topology on $\mathfrak M^+(\mathbf A)$, we call a set $\mathfrak F\subset\mathfrak M^+(\mathbf A)$ {\it
vaguely bounded\/} if for every $i\in I$ and every $\varphi\in C_0(X)$,
\[\sup_{\boldsymbol{\mu}\in\mathfrak F}\,|\mu^i(\varphi)|<\infty.\]

\begin{lemma}\label{lem:vaguecomp} If\/ $\mathfrak F\subset\mathfrak
M^+(\mathbf A)$ is vaguely bounded, then it is vaguely relatively
compact.\end{lemma}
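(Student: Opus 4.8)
The plan is to reduce the statement to the classical scalar fact that a vaguely bounded subset of $\mathfrak M^+(X)$ is vaguely relatively compact, and then to exploit the product structure $\mathfrak M^+(\mathbf A)=\prod_{i\in I}\mathfrak M^+(A_i)$ together with Tychonoff's theorem.

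First, for each $i\in I$ let $\mathfrak F^i:=\{\mu^i:\boldsymbol\mu\in\mathfrak F\}\subset\mathfrak M^+(A_i)$ be the image of $\mathfrak F$ under the $i$-th coordinate projection $\boldsymbol\mu\mapsto\mu^i$. The hypothesis that $\mathfrak F$ is vaguely bounded says precisely that, for every $i\in I$, $\mathfrak F^i$ is a vaguely bounded subset of $\mathfrak M^+(X)$, i.e.\ $\sup_{\nu\in\mathfrak F^i}|\nu(\varphi)|<\infty$ for all $\varphi\in C_0(X)$. By the cited classical result (see \cite{B2,E2}), $\mathfrak F^i$ is then vaguely relatively compact in $\mathfrak M^+(X)$; write $\overline{\mathfrak F^i}$ for its vague closure there. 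Since $A_i$ is closed in $X$, the cone $\mathfrak M^+(A_i)$ is vaguely closed in $\mathfrak M^+(X)$, hence $\overline{\mathfrak F^i}\subset\mathfrak M^+(A_i)$, so that $\overline{\mathfrak F^i}$ is a vaguely compact subset of $\mathfrak M^+(A_i)$.

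Next consider $\mathbf K:=\prod_{i\in I}\overline{\mathfrak F^i}$. By Tychonoff's theorem, $\mathbf K$ is compact in the product topology of $\mathfrak M^+(\mathbf A)$, which by Definition~\ref{def-vague} is exactly the vague topology, and clearly $\mathfrak F\subset\mathbf K$. Since $\mathfrak M^+(\mathbf A)$ is Hausdorff (being a finite product of vaguely Hausdorff spaces, cf.\ the discussion following Definition~\ref{def-vague}), the compact set $\mathbf K$ is closed in $\mathfrak M^+(\mathbf A)$; hence the vague closure of $\mathfrak F$ in $\mathfrak M^+(\mathbf A)$ is a closed subset of $\mathbf K$ and is therefore vaguely compact. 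This proves that $\mathfrak F$ is vaguely relatively compact.

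The only substantive ingredient is the scalar assertion ``vaguely bounded $\Rightarrow$ vaguely relatively compact'' on $\mathfrak M^+(X)$, which holds for an arbitrary locally compact space $X$; it follows, for instance, by embedding $\mathfrak F^i$ into the compact product $\prod_{\varphi\in C_0(X)}[-c_\varphi,c_\varphi]$ with $c_\varphi:=\sup_{\nu\in\mathfrak F^i}|\nu(\varphi)|$ and noting that the relations defining a positive Radon measure (additivity, positive homogeneity, and positivity on $C_0(X)$) pass to the vague limit. No countability hypothesis on $X$ is needed, which is precisely why nets are used throughout (cf.\ Remark~\ref{rem-net}).
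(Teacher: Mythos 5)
Your proof is correct and follows essentially the same route as the paper's: project $\mathfrak F$ onto its coordinates, invoke the classical scalar fact that a vaguely bounded subset of $\mathfrak M^+(X)$ is vaguely relatively compact, and conclude via Tychonoff's theorem. You merely spell out details the paper leaves implicit (passing to the closures $\overline{\mathfrak F^i}\subset\mathfrak M^+(A_i)$ and the Hausdorff argument), so no further comment is needed.
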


\begin{proof} It is clear from the above definition that for every $i\in I$ the set
\[\mathfrak F^i:=\bigl\{\mu^i\in\mathfrak M^+(A_i):  \ \boldsymbol\mu=(\mu^j)_{j\in I}\in\mathfrak F\bigr\}\]
is vaguely bounded in $\mathfrak M^+(X)$, and hence $\mathfrak F^i$ is vaguely relatively compact in $\mathfrak M(X)$ \cite[Chapter~III, Section~2, Proposition~9]{B2}.
Since $\mathfrak F\subset\prod_{i\in I}\,\mathfrak F^i$, Tychonoff's theorem on the product of compact spaces \cite[Chapter~I, Section~9, Theorem~3]{B1} implies the lemma.\end{proof}

Let $\mathfrak M^+(\mathbf A,\leqslant\!\mathbf a,\mathbf g)$ consist of all $\boldsymbol{\mu}\in\mathfrak M^+(\mathbf A)$ with $\langle g_i,\mu^i\rangle\leqslant a_i$ for all $i\in I$. We also write $\mathfrak M^{\boldsymbol{\sigma}}(\mathbf A,\leqslant\!\mathbf a,\mathbf g):=\mathfrak M^{\boldsymbol{\sigma}}(\mathbf A)\cap\mathfrak M^+(\mathbf A,\leqslant\!\mathbf a,\mathbf g)$. By (\ref{infg}),
\begin{equation}\label{bbbound}\mu^i(A_i)\leqslant a_ig_{i,\inf}^{-1}<\infty\text{ \ for all \ }\boldsymbol{\mu}\in\mathfrak M^+(\mathbf A,\leqslant\!\mathbf a,\mathbf g).\end{equation}

\begin{lemma}\label{lemma-rel-comp} $\mathfrak M^+(\mathbf A,\leqslant\!\mathbf a,\mathbf g)$ and\/ $\mathfrak M^{\boldsymbol{\sigma}}(\mathbf A,\leqslant\!\mathbf a,\mathbf g)$ are vaguely bounded and closed, and hence they both are vaguely compact. If the condenser\/ $\mathbf A$ is compact then the same holds for\/ $\mathfrak M^+(\mathbf A,\mathbf a,\mathbf g)$ and\/ $\mathfrak M^{\boldsymbol{\sigma}}(\mathbf A,\mathbf a,\mathbf g)$.\end{lemma}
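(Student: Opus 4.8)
The plan is to prove the first assertion (vague boundedness, vague closedness, hence vague compactness of $\mathfrak M^+(\mathbf A,\leqslant\!\mathbf a,\mathbf g)$ and of $\mathfrak M^{\boldsymbol\sigma}(\mathbf A,\leqslant\!\mathbf a,\mathbf g)$) first, and then deduce the compact-condenser statement from it. For \emph{vague boundedness}: fix $i\in I$ and $\varphi\in C_0(X)$. Since $\varphi$ has compact support, $|\varphi|\leqslant c\cdot 1_X$ fails in general, but $|\mu^i(\varphi)|\leqslant\|\varphi\|_\infty\,\mu^i(S(\varphi))\leqslant\|\varphi\|_\infty\,\mu^i(A_i)$, and by (\ref{bbbound}) this is bounded by $\|\varphi\|_\infty\,a_ig_{i,\inf}^{-1}<\infty$ uniformly over $\boldsymbol\mu$ in the class. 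This handles both classes at once, since $\mathfrak M^{\boldsymbol\sigma}(\mathbf A,\leqslant\!\mathbf a,\mathbf g)\subset\mathfrak M^+(\mathbf A,\leqslant\!\mathbf a,\mathbf g)$. By Lemma~\ref{lem:vaguecomp}, each class is then vaguely relatively compact, so it remains only to check vague \emph{closedness}.

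For \emph{vague closedness} of $\mathfrak M^+(\mathbf A,\leqslant\!\mathbf a,\mathbf g)$: we already know $\mathfrak M^+(\mathbf A)$ is vaguely closed, so it suffices to show that the constraint $\langle g_i,\mu^i\rangle\leqslant a_i$ is preserved under vague limits. This is where the main subtlety lies: $g_i$ is continuous and strictly positive but need \emph{not} have compact support, so $\mu\mapsto\langle g_i,\mu\rangle$ is not obviously vaguely continuous. However, $g_i\in\Psi(X)$ in the relevant sense only if $g_i\geqslant 0$ (true) — more precisely $g_i$ is l.s.c.\ and lower bounded, hence $g_i\in\Psi(X)$ when $X$ is noncompact, and when $X$ is compact it is lower bounded anyway; so by Lemma~\ref{lemma-semi}, $\boldsymbol\mu\mapsto\langle g_i,\mu^i\rangle$ is vaguely l.s.c.\ on $\mathfrak M^+(\mathbf A)$. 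Thus if $\boldsymbol\mu_s\to\boldsymbol\mu$ vaguely with $\langle g_i,\mu_s^i\rangle\leqslant a_i$ for all $s$, then $\langle g_i,\mu^i\rangle\leqslant\liminf_s\langle g_i,\mu_s^i\rangle\leqslant a_i$. Hence the class is vaguely closed, and being also vaguely relatively compact it is vaguely compact. For $\mathfrak M^{\boldsymbol\sigma}(\mathbf A,\leqslant\!\mathbf a,\mathbf g)$ one additionally notes that $\mathfrak M^{\boldsymbol\sigma}(\mathbf A)$ is vaguely closed (already recorded in the text, since $\mathfrak M^+(X)$ is vaguely closed and $\mu^i\leqslant\sigma^i$ is preserved under vague limits: $\sigma^i-\mu_s^i\in\mathfrak M^+(X)$ converges vaguely to $\sigma^i-\mu^i$); intersecting two vaguely closed sets gives a vaguely closed set, so the same conclusion follows.

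For the \emph{compact-condenser} case: when every $A_i$ is compact, I claim $\mathfrak M^+(\mathbf A,\mathbf a,\mathbf g)$ and $\mathfrak M^{\boldsymbol\sigma}(\mathbf A,\mathbf a,\mathbf g)$ are vaguely compact. Vague boundedness and relative compactness follow exactly as above (the mass bound (\ref{up-est}) still applies). For closedness, the point is that now the \emph{equality} $\langle g_i,\mu^i\rangle=a_i$ is preserved: on a compact $A_i$, the restriction $g_i|_{A_i}$ is continuous with compact support after extension, or more directly, one uses that both $g_i$ and — since $X\setminus A_i$ being open means one can also apply l.s.c.\ of $\mu\mapsto\langle g_i,\mu^i\rangle$ \emph{and} upper semicontinuity coming from $\mu^i(A_i)\leqslant\mu^i(X)$ being controlled — actually the clean argument is: for $\boldsymbol\mu$ supported in the fixed compact $\prod A_i$, pick $\varphi\in C_0(X)$ with $\varphi\equiv 1$ on $A_i$; then $\langle g_i,\mu^i\rangle=\langle g_i\varphi,\mu^i\rangle$ and $g_i\varphi\in C_0(X)$, so $\boldsymbol\mu\mapsto\langle g_i,\mu^i\rangle$ is vaguely continuous on the vaguely closed set of vector measures carried by $\prod A_i$. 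Hence $\{\langle g_i,\mu^i\rangle=a_i\ \forall i\}$ is vaguely closed there, and intersecting with the (vaguely closed) sets $\mathfrak M^+(\mathbf A)$, resp.\ $\mathfrak M^{\boldsymbol\sigma}(\mathbf A)$, yields vague closedness; together with relative compactness this gives vague compactness. The main obstacle throughout is the noncompactness of the supports of the $g_i$ (and of $X$): without it the constraint functionals would be vaguely continuous, but with it we must settle for lower semicontinuity in the inequality-constrained classes — which is exactly why $\leqslant\!\mathbf a$ rather than $\mathbf a$ appears in the general statement.
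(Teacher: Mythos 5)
Your proposal is correct and follows essentially the same route as the paper: the mass bound (\ref{bbbound}) gives vague boundedness, Lemma~\ref{lem:vaguecomp} gives relative compactness, vague lower semicontinuity of $\mu\mapsto\langle g_i,\mu\rangle$ (Lemma~\ref{lemma-semi}, since $g_i>0$ is continuous) preserves the inequality constraints, and vague closedness of $\mathfrak M^{\boldsymbol\sigma}(\mathbf A)$ handles the constrained class. In the compact case the paper simply asserts that $\nu\mapsto\langle g_i,\nu\rangle$ is vaguely continuous on $\mathfrak M^+(A_i)$; your cutoff argument with $\varphi\in C_0(X)$, $\varphi\equiv1$ on $A_i$, is exactly the standard justification of that assertion, so the two proofs coincide in substance.
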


\begin{proof} It is seen from (\ref{bbbound}) that $\mathfrak M^+(\mathbf A,\leqslant\!\mathbf a,\mathbf g)$ is vaguely bounded. Hence, by Lem\-ma~\ref{lem:vaguecomp}, for any net $(\boldsymbol{\mu}_s)_{s\in S}\subset\mathfrak M^+(\mathbf A,\leqslant\!\mathbf a,\mathbf g)$ there exists a vague cluster point $\boldsymbol{\mu}$. Since $\mathfrak M^+(\mathbf A)$ is vaguely closed, we have $\boldsymbol{\mu}\in\mathfrak M^+(\mathbf A)$. Choose a subnet  $(\boldsymbol{\mu}_{t})_{t\in T}$ of $(\boldsymbol{\mu}_s)_{s\in S}$ converging vaguely to $\boldsymbol{\mu}$. As $g_i$ is positive and continuous, we get from Lemma~\ref{lemma-semi}
\begin{equation}\label{rel-lemma-rel-comp}\langle g_i,\mu^i\rangle\leqslant\liminf_{t\in T}\,\langle g_i,\mu_{t}^i\rangle\leqslant a_i\quad\text{for all \ }i\in I.\end{equation}
Thus $\boldsymbol{\mu}\in\mathfrak M^+(\mathbf A,\leqslant\!\mathbf a,\mathbf g)$, which shows that indeed $\mathfrak M^+(\mathbf A,\leqslant\!\mathbf a,\mathbf g)$ is vaguely closed and compact. Since $\mathfrak M^{\boldsymbol{\sigma}}(\mathbf A)$ is vaguely closed in $\mathfrak M^+(\mathbf A)$, the first assertion of the lemma follows.
Assume now that $\mathbf A$ is compact, and let the above $(\boldsymbol{\mu}_s)_{s\in S}$ be taken from $\mathfrak M^+(\mathbf A,\mathbf a,\mathbf g)$. Then $\nu\mapsto\langle g_i,\nu\rangle$ is vaguely continuous on $\mathfrak M^+(A_i)$ for every $i\in I$, and therefore all the inequalities in (\ref{rel-lemma-rel-comp}) are in fact equalities. Thus $\mathfrak M^+(\mathbf A,\mathbf a,\mathbf g)$ and hence also $\mathfrak M^{\boldsymbol{\sigma}}(\mathbf A,\mathbf a,\mathbf g)$ are vaguely closed and  compact.\end{proof}

\begin{lemma}\label{lemma-rel-cl} $\mathfrak M^{\boldsymbol{\sigma}}(\mathbf A,\mathbf a,\mathbf g)$ is vaguely compact for any\/ $\boldsymbol{\sigma}\in\mathfrak C(\mathbf A)$ possessing the property\footnote{For a compact $\mathbf A$ relation (\ref{boundd}) holds automatically, and Lemma~\ref{lemma-rel-cl} then in fact reduces to Lemma~\ref{lemma-rel-comp}.}
\begin{equation}\label{boundd}\langle g_i,\sigma^i\rangle<\infty\quad\text{for all \ }i\in I.\end{equation}
\end{lemma}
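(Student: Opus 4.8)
The plan is to exhibit $\mathfrak M^{\boldsymbol{\sigma}}(\mathbf A,\mathbf a,\mathbf g)$ as a vaguely closed subset of the vaguely compact set $\mathfrak M^{\boldsymbol{\sigma}}(\mathbf A,\leqslant\!\mathbf a,\mathbf g)$ furnished by Lemma~\ref{lemma-rel-comp}; a closed subset of a compact space being compact, this will do it. Concretely, I would take an arbitrary net $(\boldsymbol{\mu}_s)_{s\in S}\subset\mathfrak M^{\boldsymbol{\sigma}}(\mathbf A,\mathbf a,\mathbf g)\subset\mathfrak M^{\boldsymbol{\sigma}}(\mathbf A,\leqslant\!\mathbf a,\mathbf g)$, use vague compactness of the latter to pass to a subnet $(\boldsymbol{\mu}_t)_{t\in T}$ converging vaguely to some $\boldsymbol{\mu}\in\mathfrak M^{\boldsymbol{\sigma}}(\mathbf A,\leqslant\!\mathbf a,\mathbf g)$, and then show that in fact $\langle g_i,\mu^i\rangle=a_i$ for every $i\in I$, so that $\boldsymbol{\mu}\in\mathfrak M^{\boldsymbol{\sigma}}(\mathbf A,\mathbf a,\mathbf g)$; since $\mathfrak M^+(\mathbf A)$ is vaguely Hausdorff, this is exactly what is needed.

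One of the two inequalities is immediate: each $g_i$ is positive and continuous, hence lies in $\Psi(X)$, so Lemma~\ref{lemma-semi} gives $\langle g_i,\mu^i\rangle\leqslant\liminf_{t}\langle g_i,\mu_t^i\rangle=a_i$. The substance — and the only place where hypothesis (\ref{boundd}) is used — is the reverse inequality $\langle g_i,\mu^i\rangle\geqslant a_i$, i.e.\ ruling out loss of mass at the point at infinity. Given $\varepsilon>0$ I would, using that $g_i\cdot\sigma^i$ is a \emph{bounded} positive Radon measure of total mass $\langle g_i,\sigma^i\rangle<\infty$, pick $\varphi_i\in C_0(X)$ with $0\leqslant\varphi_i\leqslant1$ and $\langle g_i(1-\varphi_i),\sigma^i\rangle<\varepsilon$ (the total mass of a bounded positive Radon measure is the supremum of its integrals against such $\varphi$; equivalently, choose by inner regularity a compact $K_i$ with $(g_i\cdot\sigma^i)(X\setminus K_i)<\varepsilon$ and then $\varphi_i\in C_0(X)$ with $\mathbf 1_{K_i}\leqslant\varphi_i\leqslant1$, noting $1-\varphi_i\leqslant\mathbf 1_{X\setminus K_i}$). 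Then $g_i\varphi_i\in C_0(X)$, so vague convergence yields $\langle g_i\varphi_i,\mu_t^i\rangle\to\langle g_i\varphi_i,\mu^i\rangle$, while from $\mu_t^i\leqslant\sigma^i$ and $g_i(1-\varphi_i)\geqslant0$,
\[\langle g_i\varphi_i,\mu_t^i\rangle=\langle g_i,\mu_t^i\rangle-\langle g_i(1-\varphi_i),\mu_t^i\rangle\geqslant a_i-\langle g_i(1-\varphi_i),\sigma^i\rangle>a_i-\varepsilon.\]
Passing to the limit and using $\varphi_i\leqslant1$ gives $\langle g_i,\mu^i\rangle\geqslant\langle g_i\varphi_i,\mu^i\rangle\geqslant a_i-\varepsilon$, and letting $\varepsilon\downarrow0$ closes the argument.

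The one point requiring care — and the conceptual heart of the matter — is precisely this last estimate: the mass functional $\boldsymbol{\mu}\mapsto\langle g_i,\mu^i\rangle$ is only vaguely lower semicontinuous, so the equality constraint is not automatically preserved under vague limits; it is the domination $\mu^i\leqslant\sigma^i$ together with $\langle g_i,\sigma^i\rangle<\infty$ that forces the tails to be uniformly small and thereby supplies the missing upper semicontinuity. A minor technical wrinkle is that $g_i$ itself need be neither bounded nor compactly supported, but it is bounded on the compact support of $\varphi_i$, so $g_i\varphi_i$ is indeed a bounded continuous function of compact support. Finally, for a compact condenser $\mathbf A$ the hypothesis (\ref{boundd}) holds automatically and the statement collapses to Lemma~\ref{lemma-rel-comp}, in agreement with the footnote; the above localization is just the ``no mass disappears at infinity'' heuristic of the Introduction made rigorous.
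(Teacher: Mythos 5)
Your proof is correct and follows essentially the same route as the paper: reduce to the vague compactness of $\mathfrak M^{\boldsymbol{\sigma}}(\mathbf A,\leqslant\!\mathbf a,\mathbf g)$ from Lemma~\ref{lemma-rel-comp}, then use the domination $\mu_t^i\leqslant\sigma^i$ together with $\langle g_i,\sigma^i\rangle<\infty$ to make the tails uniformly small and so recover the equality $\langle g_i,\mu^i\rangle=a_i$ in the limit. The only (inessential) difference is technical: you localize with continuous cutoffs $g_i\varphi_i\in C_0(X)$, whereas the paper exhausts $A_i$ by compact sets and works with the upper semicontinuous indicators $1_K$ via Lemma~\ref{lemma-semi} and \cite[Lemma~1.2.2]{F1}.
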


\begin{proof} Fix a vague cluster point $\boldsymbol{\mu}$ of a net $(\boldsymbol{\mu}_s)_{s\in S}\subset\mathfrak M^{\boldsymbol{\sigma}}(\mathbf A,\mathbf a,\mathbf g)$. By Lemma~\ref{lemma-rel-comp}, such a $\boldsymbol{\mu}$ exists and belongs to $\mathfrak M^{\boldsymbol{\sigma}}(\mathbf A,\leqslant\!\mathbf a,\mathbf g)$. We only need to show that, under requirement (\ref{boundd}), $\langle g_i,\mu^i\rangle=a_i$ for every $i\in I$. Passing to a subnet and changing notations assume that $\boldsymbol{\mu}$ is the vague limit of $(\boldsymbol{\mu}_s)_{s\in S}$.
Consider an exhaustion of $A_i$ by an upper directed family of compact sets $K\subset A_i$.\footnote{A family $\mathfrak Q$ of sets $Q\subset X$ is said to be {\it upper directed\/} if for any $Q_1,Q_2\in\mathfrak Q$ there exists $Q_3\in\mathfrak Q$ such that $Q_1\cup Q_2\subset Q_3$.}
Since the indicator function $1_K$ of $K$ is upper semicontinuous, we get from Lemma~\ref{lemma-semi} (with $\psi=-g_i1_{K}\in\Psi(X)$) and \cite[Lemma~1.2.2]{F1}
\begin{align*}a_i&\geqslant\langle g_i,\mu^i\rangle=\lim_{K\uparrow A_i}\,\langle g_i1_{K},\mu^i\rangle\geqslant\lim_{K\uparrow A_i}\,\limsup_{s\in S}\,\langle g_i1_{K},\mu_{s}^i\rangle\\&{}
=a_i-\lim_{K\uparrow A_i}\,\liminf_{s\in S}\,\langle g_i1_{A_i\setminus K},\mu_{s}^i\rangle.\end{align*}
Hence, the lemma will follow once we show that
\begin{equation}\label{g0}\lim_{K\uparrow A_i}\,\liminf_{s\in S}\,\langle g_i1_{A_i\setminus K},\mu_{s}^i\rangle=0.\end{equation}
Since, by (\ref{boundd}),
\[\infty>\langle g_i,\sigma^i\rangle=\lim_{K\uparrow A_i}\,\langle g_i1_{K},\sigma^i\rangle,\]
we have
\[\lim_{K\uparrow A_i}\,\langle g_i1_{A_i\setminus K},\sigma^i\rangle=0.\]
When combined with
\[\langle g_i1_{A_i\setminus K},\mu_{s}^i\rangle\leqslant\langle g_i1_{A_i\setminus K},\sigma^i\rangle\quad\text{for every \ }s\in S,\]
this implies (\ref{g0}) as desired.\end{proof}

\begin{lemma}\label{gauss:lsc} In Case\/~{\rm I} the mapping\/ $\boldsymbol\mu\mapsto G_{\kappa,\mathbf f}(\boldsymbol{\mu})$ is vaguely l.s.c.\ on\/ $\mathcal E_{\kappa,\mathbf f}^+(\mathbf A)$, and it is strongly continuous if Case\/~{\rm II} holds.\end{lemma}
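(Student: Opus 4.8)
The plan is to establish the two assertions of Lemma~\ref{gauss:lsc} separately, using the decomposition $G_{\kappa,\mathbf f}(\boldsymbol\mu)=\kappa(\boldsymbol\mu,\boldsymbol\mu)+2\langle\mathbf f,\boldsymbol\mu\rangle$ together with Lemma~\ref{l-Ren}, which identifies $\kappa(\boldsymbol\mu,\boldsymbol\mu)$ with $\|R\boldsymbol\mu\|_\kappa^2=\kappa(R\boldsymbol\mu,R\boldsymbol\mu)$.

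\emph{Case~I (vague lower semicontinuity).} First I would handle the energy term $\kappa(\boldsymbol\mu,\boldsymbol\mu)=\sum_{i,j\in I}s_is_j\kappa(\mu^i,\mu^j)$. The off-diagonal cross terms with $s_is_j=-1$ are the delicate ones, since $-\kappa$ is upper semicontinuous, not lower. However, since $\kappa$ is strictly positive definite, one can rewrite $\kappa(\boldsymbol\mu,\boldsymbol\mu)$ as an energy of the signed measure $R\boldsymbol\mu$, and then use the standard fact (this is essentially the content of the classical lower semicontinuity principle for energies of perfect, or more generally strictly positive definite, kernels; see \cite{F1}) that $\nu\mapsto\|\nu\|_\kappa^2$ is vaguely l.s.c.\ on the set of signed measures whose positive and negative parts are carried by fixed disjoint sets. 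Here the positive part $(R\boldsymbol\mu)^+$ is carried by $A^+_\delta$ and the negative part by $A^-_\delta$, and these are disjoint; moreover vague convergence $\boldsymbol\mu_s\to\boldsymbol\mu$ in $\mathfrak M^+(\mathbf A)$ gives vague convergence $(R\boldsymbol\mu_s)^\pm\to(R\boldsymbol\mu)^\pm$. Thus $\liminf_s\kappa(\boldsymbol\mu_s,\boldsymbol\mu_s)\geqslant\kappa(\boldsymbol\mu,\boldsymbol\mu)$. For the linear term $2\langle\mathbf f,\boldsymbol\mu\rangle=2\sum_{i\in I}\langle f_i,\mu^i\rangle$, each $f_i\in\Psi(X)$, so Lemma~\ref{lemma-semi} gives $\liminf_s\langle f_i,\mu^i_s\rangle\geqslant\langle f_i,\mu^i\rangle$ for each $i$; summing over the finite index set $I$ preserves the inequality. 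Adding the two liminf estimates yields vague lower semicontinuity of $G_{\kappa,\mathbf f}$ on $\mathcal E^+_{\kappa,\mathbf f}(\mathbf A)$.

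\emph{Case~II (strong continuity).} Here $f_i(x)=s_i\kappa(x,\zeta)$ for a fixed $\zeta\in\mathcal E_\kappa(X)$, and by the computation \eqref{C2} we have the clean identity $G_{\kappa,\mathbf f}(\boldsymbol\mu)=\|R\boldsymbol\mu+\zeta\|_\kappa^2-\|\zeta\|_\kappa^2$. By the very definition \eqref{isom} of the semimetric on $\mathcal E^+_\kappa(\mathbf A)$, the map $\boldsymbol\mu\mapsto R\boldsymbol\mu$ is an isometry into $\mathcal E_\kappa(X)$, hence strongly continuous; translating by the fixed vector $\zeta$ and taking the (squared) norm is continuous in the pre-Hilbert space $\mathcal E_\kappa(X)$; subtracting the constant $\|\zeta\|_\kappa^2$ changes nothing. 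Therefore $G_{\kappa,\mathbf f}$ is strongly continuous on $\mathcal E^+_{\kappa,\mathbf f}(\mathbf A)$ (indeed on all of $\mathcal E^+_\kappa(\mathbf A)$, where it is finite by \eqref{GII}).

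The main obstacle is the justification of the vague lower semicontinuity of the signed-measure energy $\nu\mapsto\|\nu\|_\kappa^2$ when $\nu^+$ and $\nu^-$ live on disjoint sets; this is where the disjointness of $A^+_\delta$ and $A^-_\delta$ (a consequence of $c_\kappa(\delta_{\mathbf A})=0$ together with the fact that finite-energy measures do not charge sets of zero capacity, cf.\ the discussion around \eqref{delta}) is essential, and one must invoke the appropriate classical semicontinuity result for strictly positive definite kernels rather than na\"ively splitting into $i,j$ terms. Everything else is a routine application of Lemma~\ref{lemma-semi}, Lemma~\ref{l-Ren}, and the isometry \eqref{isom}.
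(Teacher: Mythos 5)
Your Case~II argument coincides with the paper's: by (\ref{C2}), $G_{\kappa,\mathbf f}(\boldsymbol\mu)=\|R\boldsymbol\mu+\zeta\|_\kappa^2-\|\zeta\|_\kappa^2$, and since (\ref{isom}) makes $\boldsymbol\mu\mapsto R\boldsymbol\mu$ an isometry, strong continuity is immediate. Your treatment of the external-field term in Case~I via Lemma~\ref{lemma-semi} is likewise exactly what the paper does, and the same lemma, applied on $X\times X$ to the product measures $\mu_s^i\otimes\mu_s^j$, also covers the energy terms $s_is_j\kappa(\mu^i,\mu^j)$ with $s_is_j=+1$.

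The gap is the step you yourself single out as the main obstacle. You dispose of the cross terms by appealing to a ``standard fact'' from \cite{F1} that $\nu\mapsto\|\nu\|_\kappa^2$ is vaguely l.s.c.\ on signed measures whose positive and negative parts are carried by fixed disjoint sets. No such result is in \cite{F1}: the classical lower semicontinuity principle there (Lemma~\ref{lemma-semi}) concerns positive measures, and the troublesome contribution $-2\kappa\bigl((R\boldsymbol\mu)^+,(R\boldsymbol\mu)^-\bigr)$ enters with the wrong semicontinuity under vague convergence of the two parts; disjointness of $A^+_\delta$ and $A^-_\delta$ (or the capacity-zero intersection of the plates) does not by itself restore lower semicontinuity. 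Indeed, controlling exactly this cross term is why condition (\ref{eq-intr}) was imposed in the earlier work, why Remark~\ref{r-2} (Ohtsuka) assumes $\kappa$ continuous on $A^+\times A^-$, why Theorem~\ref{th-main-comp} assumes continuity of $\kappa_\alpha(\cdot,\sigma^i)$ so that, via Lemma~\ref{cont-pot}, the mutual energies actually converge, and why Section~\ref{sec7} has to invoke the strong completeness results built on \cite{ZUmzh} and Deny's theorem, available only for Riesz kernels. So your Case~I argument, as written, does not prove lower semicontinuity of the energy part; it only renames the difficulty. For comparison, the paper's own proof settles Case~I by Lemma~\ref{lemma-semi} alone, and in every later application of the lemma the vague convergence of the minimizing (sub)sequence comes together with strong convergence, so the energy term is there controlled through the norm rather than through vague semicontinuity of the cross terms; if you insist on the full Case~I statement by your route, you must actually prove the signed-measure l.s.c.\ claim in this setting rather than cite it.
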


\begin{proof}This is obtained from Lemma~\ref{lemma-semi} and relation (\ref{C2}), respectively.\end{proof}

\begin{definition}A net $(\boldsymbol\mu_s)_{s\in S}\subset\mathcal E^{\boldsymbol{\sigma}}_{\kappa,\mathbf f}(\mathbf A,\mathbf a,\mathbf g)$ is said to be {\it minimizing\/} in Problem~\ref{pr2} if
\begin{equation}\label{min-seq:}\lim_{s\in S}\,G_{\kappa,\mathbf f}(\boldsymbol\mu_s)=G_{\kappa,\mathbf f}^{\boldsymbol{\sigma}}(\mathbf A,\mathbf a,\mathbf g).\end{equation}
Let $\mathbb M^{\boldsymbol{\sigma}}_{\kappa,\mathbf f}(\mathbf A,\mathbf a,\mathbf g)$ consist of all these nets $(\boldsymbol\mu_s)_{s\in S}$; it is nonempty because of (\ref{gauss-finite}).
\end{definition}

\begin{lemma}For any\/ $(\boldsymbol\mu_s)_{s\in S}$ and\/ $(\boldsymbol\nu_t)_{t\in T}$ in\/ $\mathbb M^{\boldsymbol{\sigma}}_{\kappa,\mathbf f}(\mathbf A,\mathbf a,\mathbf g)$ we have
\begin{equation}
\lim_{(s,t)\in S\times T}\,\|\boldsymbol{\mu}_s-\boldsymbol{\nu}_t\|_{\mathcal E^+_\kappa(\mathbf A)}=0,\label{fund:}
\end{equation}
where\/ $S\times T$ is the upper directed product\footnote{See e.g.\ \cite[Chapter~2, Section~3]{K}.} of the upper directed sets\/ $S$
and\/~$T$.
\end{lemma}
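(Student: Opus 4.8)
The plan is to run the convexity-plus-parallelogram argument from the proof of Lemma~\ref{lemma:unique:}, but now applied to the two \emph{nets} $(\boldsymbol\mu_s)_{s\in S}$ and $(\boldsymbol\nu_t)_{t\in T}$ instead of to a pair of genuine minimizers, and then to pass to the limit along the product directed set $S\times T$.

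First I would fix $s\in S$ and $t\in T$ and, exactly as in Lemma~\ref{lemma:unique:}, use the convexity of $\mathcal E^{\boldsymbol\sigma}_{\kappa,\mathbf f}(\mathbf A,\mathbf a,\mathbf g)$ to see that the midpoint $\boldsymbol\eta_{s,t}:=\tfrac12(\boldsymbol\mu_s+\boldsymbol\nu_t)$ again belongs to this class, so that $G^{\boldsymbol\sigma}_{\kappa,\mathbf f}(\mathbf A,\mathbf a,\mathbf g)\leqslant G_{\kappa,\mathbf f}(\boldsymbol\eta_{s,t})$. Combining (\ref{wen}), (\ref{Re}) and the linearity of $\boldsymbol\mu\mapsto\langle\mathbf f,\boldsymbol\mu\rangle$ with the parallelogram identity in the pre-Hilbert space $\mathcal E_\kappa(X)$ applied to $R\boldsymbol\mu_s$ and $R\boldsymbol\nu_t$, and using the isometry (\ref{isom}), one obtains the identity
\[\|\boldsymbol\mu_s-\boldsymbol\nu_t\|^2_{\mathcal E^+_\kappa(\mathbf A)}=2G_{\kappa,\mathbf f}(\boldsymbol\mu_s)+2G_{\kappa,\mathbf f}(\boldsymbol\nu_t)-4G_{\kappa,\mathbf f}(\boldsymbol\eta_{s,t}),\]
and hence, since $\boldsymbol\eta_{s,t}$ is admissible, the estimate
\[0\leqslant\|\boldsymbol\mu_s-\boldsymbol\nu_t\|^2_{\mathcal E^+_\kappa(\mathbf A)}\leqslant 2G_{\kappa,\mathbf f}(\boldsymbol\mu_s)+2G_{\kappa,\mathbf f}(\boldsymbol\nu_t)-4G^{\boldsymbol\sigma}_{\kappa,\mathbf f}(\mathbf A,\mathbf a,\mathbf g).\]

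Then I would let $(s,t)$ increase along $S\times T$. Since both nets are minimizing, (\ref{min-seq:}) gives $\lim_{s\in S}G_{\kappa,\mathbf f}(\boldsymbol\mu_s)=\lim_{t\in T}G_{\kappa,\mathbf f}(\boldsymbol\nu_t)=G^{\boldsymbol\sigma}_{\kappa,\mathbf f}(\mathbf A,\mathbf a,\mathbf g)$, and by the elementary properties of convergence along a product directed set the right-hand side of the last display tends to $2G^{\boldsymbol\sigma}_{\kappa,\mathbf f}(\mathbf A,\mathbf a,\mathbf g)+2G^{\boldsymbol\sigma}_{\kappa,\mathbf f}(\mathbf A,\mathbf a,\mathbf g)-4G^{\boldsymbol\sigma}_{\kappa,\mathbf f}(\mathbf A,\mathbf a,\mathbf g)=0$. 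Squeezing, $\|\boldsymbol\mu_s-\boldsymbol\nu_t\|^2_{\mathcal E^+_\kappa(\mathbf A)}\to0$ along $S\times T$, which is (\ref{fund:}).

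The computation is entirely routine; the only points needing a word of care are (i) the admissibility of the midpoint $\boldsymbol\eta_{s,t}$ — finiteness of $G_{\kappa,\mathbf f}$ as well as the mass constraints $\langle g_i,\eta^i_{s,t}\rangle=a_i$ and the upper constraint $\eta^i_{s,t}\leqslant\sigma^i$ — which is precisely the convexity already exploited in Lemma~\ref{lemma:unique:}, and (ii) the passage from convergence of the two individual energy nets along $S$ and along $T$ to convergence of $G_{\kappa,\mathbf f}(\boldsymbol\mu_s)+G_{\kappa,\mathbf f}(\boldsymbol\nu_t)$ along the product $S\times T$, a standard fact about nets (given $\varepsilon>0$, choose $s_0\in S$ and $t_0\in T$ controlling each summand to within $\varepsilon/2$). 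I do not expect any genuine obstacle; this lemma is simply the net-theoretic Cauchy estimate underpinning the completeness-type arguments that follow.
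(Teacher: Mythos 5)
Your proof is correct and is essentially the paper's argument: the paper likewise derives the bound $0\leqslant\|R\boldsymbol\mu_s-R\boldsymbol\nu_t\|^2_\kappa\leqslant 2G_{\kappa,\mathbf f}(\boldsymbol\mu_s)+2G_{\kappa,\mathbf f}(\boldsymbol\nu_t)-4G^{\boldsymbol\sigma}_{\kappa,\mathbf f}(\mathbf A,\mathbf a,\mathbf g)$ by running the convexity-plus-parallelogram computation of Lemma~\ref{lemma:unique:} on the pair $(\boldsymbol\mu_s,\boldsymbol\nu_t)$, and then concludes via (\ref{isom}), (\ref{min-seq:}) and the finiteness of $G^{\boldsymbol\sigma}_{\kappa,\mathbf f}(\mathbf A,\mathbf a,\mathbf g)$, exactly as you do along the product directed set $S\times T$.
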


\begin{proof} In the same manner as in the proof of Lemma~\ref{lemma:unique:} we get
\begin{equation*}0\leqslant\|R\boldsymbol{\mu}_s-R\boldsymbol{\nu}_t\|^2_\kappa\leqslant-
4G^{\boldsymbol{\sigma}}_{\kappa,\mathbf f}(\mathbf A,\mathbf a,\mathbf g)+2G_{\kappa,\mathbf f}(\boldsymbol{\mu}_s)+2G_{\kappa,\mathbf f}(\boldsymbol{\nu}_t),\end{equation*}
which gives (\ref{fund:}) when combined with (\ref{isom}), (\ref{min-seq:}) and the finiteness of $G_{\kappa,\mathbf f}^{\boldsymbol{\sigma}}(\mathbf A,\mathbf a,\mathbf g)$.\end{proof}

Taking here $(\boldsymbol\mu_s)_{s\in S}$ and $(\boldsymbol\nu_t)_{t\in T}$ to be equal, we  arrive at the following conclusion.

\begin{corollary}\label{cor:fund}Every\/ $(\boldsymbol\mu_s)_{s\in S}\in\mathbb M^{\boldsymbol{\sigma}}_{\kappa,\mathbf f}(\mathbf A,\mathbf a,\mathbf g)$ is strong Cauchy in\/ $\mathcal E^+_\kappa(\mathbf A)$.\end{corollary}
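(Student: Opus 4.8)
The plan is to obtain the corollary as a one-line specialization of the preceding lemma. Given $(\boldsymbol\mu_s)_{s\in S}\in\mathbb M^{\boldsymbol{\sigma}}_{\kappa,\mathbf f}(\mathbf A,\mathbf a,\mathbf g)$, I would apply that lemma with the second minimizing net taken to be the first one, i.e.\ with $(\boldsymbol\nu_t)_{t\in T}:=(\boldsymbol\mu_s)_{s\in S}$ (so $T:=S$ and $\boldsymbol\nu_s=\boldsymbol\mu_s$). Then \eqref{fund:} becomes
\[\lim_{(s,s')\in S\times S}\,\|\boldsymbol{\mu}_s-\boldsymbol{\mu}_{s'}\|_{\mathcal E^+_\kappa(\mathbf A)}=0,\]
the limit being taken along the upper directed product $S\times S$. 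By the definition of a strong Cauchy net in the semimetric space $\bigl(\mathcal E^+_\kappa(\mathbf A),\|\cdot\|_{\mathcal E^+_\kappa(\mathbf A)}\bigr)$ (see Theorem~\ref{lemma:semimetric}), this displayed relation is precisely what has to be shown.

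There is no real obstacle here: the only point to verify is that the displayed limit over $S\times S$ is equivalent to the usual $\varepsilon$-formulation of the Cauchy property --- for every $\varepsilon>0$ there is $s_0\in S$ with $\|\boldsymbol{\mu}_s-\boldsymbol{\mu}_{s'}\|_{\mathcal E^+_\kappa(\mathbf A)}<\varepsilon$ whenever $s,s'\geqslant s_0$ --- and this is immediate from the definition of the product direction on $S\times S$. If one prefers to argue from scratch rather than invoke the preceding lemma, the same estimate can be reproduced directly: apply the parallelogram identity in the pre-Hilbert space $\mathcal E_\kappa(X)$ to $R\boldsymbol{\mu}_s$ and $R\boldsymbol{\mu}_{s'}$, use the convexity of $\mathcal E^{\boldsymbol{\sigma}}_{\kappa,\mathbf f}(\mathbf A,\mathbf a,\mathbf g)$ together with \eqref{wen} and \eqref{Re}, and then pass to the limit via \eqref{min-seq:} and the isometry \eqref{isom}. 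Either way the corollary follows at once.
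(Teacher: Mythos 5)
Your proposal is correct and is exactly the paper's argument: the corollary is obtained by taking the two minimizing nets in the preceding lemma to be equal, so that \eqref{fund:} over $S\times S$ yields the Cauchy property via \eqref{isom}. The optional direct argument you sketch merely reproduces that lemma's proof, so nothing further is needed.
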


The result below will be used in subsequent work of the authors.

\begin{theorem}\label{ex-gnr}Let the kernel\/ $\kappa$ be perfect and let either\/ $I^+$ or\/ $I^-$ be empty. If moreover\/ {\rm(\ref{boundd})} holds, then in both Cases\/~{\rm I} and\/~{\rm II} Problem\/~{\rm\ref{pr2}} is solvable for any vector\/ $\mathbf a$. Furthermore, every\/ $(\mu_s)_{s\in S}\in\mathbb M^{\boldsymbol{\sigma}}_{\kappa,\mathbf f}(\mathbf A,\mathbf a,\mathbf g)$ converges to any\/ $\boldsymbol\lambda\in\mathfrak S^{\boldsymbol{\sigma}}_{\kappa,\mathbf f}(\mathbf A,\mathbf a,\mathbf g)$ strongly in\/ $\mathcal E_\kappa^+(\mathbf A)$; and hence also vaguely whenever the $A_i$, $i\in I$, are mutually essentially disjoint.\end{theorem}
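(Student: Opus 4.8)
The plan is to transport Problem~\ref{pr2} to the cone $\mathcal E^+_\kappa(X)$ via the isometry $R$, using the strong completeness of $\mathcal E^+_\kappa(X)$ together with the vague compactness of the admissible class. Assume without loss of generality that $I^-=\varnothing$, so that all signs $s_i=+1$ and $R\boldsymbol\mu=\sum_{i\in I}\mu^i$ is a \emph{positive} measure; hence $R$ maps $\mathcal E^+_\kappa(\mathbf A)$ into the cone $\mathcal E^+_\kappa(X)$. (If instead $I^+=\varnothing$, replace $R$ by $-R$ throughout.) Since $\kappa$ is perfect, $\mathcal E^+_\kappa(X)$ is strongly complete and its strong topology is finer than the vague one (Theorem~\ref{fu-complete}); and by (\ref{boundd}) and Lemma~\ref{lemma-rel-cl} the class $\mathfrak M^{\boldsymbol\sigma}(\mathbf A,\mathbf a,\mathbf g)$ is vaguely compact. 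These two facts are the engine of the argument.

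First I would fix a minimizing net $(\boldsymbol\mu_s)_{s\in S}\in\mathbb M^{\boldsymbol\sigma}_{\kappa,\mathbf f}(\mathbf A,\mathbf a,\mathbf g)$ (nonempty by (\ref{gauss-finite})). By Corollary~\ref{cor:fund} it is strong Cauchy in $\mathcal E^+_\kappa(\mathbf A)$, which by (\ref{isom}) means that $(R\boldsymbol\mu_s)_{s\in S}$ is a strong Cauchy net in $\mathcal E^+_\kappa(X)$. By Theorem~\ref{fu-complete} there is $\nu\in\mathcal E^+_\kappa(X)$ with $R\boldsymbol\mu_s\to\nu$ strongly and also vaguely. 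Next, by Lemma~\ref{lemma-rel-cl} the net $(\boldsymbol\mu_s)$ has a vague cluster point $\boldsymbol\lambda\in\mathfrak M^{\boldsymbol\sigma}(\mathbf A,\mathbf a,\mathbf g)$; passing to a subnet converging vaguely to $\boldsymbol\lambda$ we get $R\boldsymbol\mu_s\to R\boldsymbol\lambda$ vaguely (a finite sum of vaguely convergent components), so $R\boldsymbol\lambda=\nu\in\mathcal E^+_\kappa(X)$ by uniqueness of the vague limit. Since $\lambda^i\leqslant R\boldsymbol\lambda$ for every $i$, this forces $\kappa(\lambda^i,\lambda^i)\leqslant\|\nu\|_\kappa^2<\infty$, whence $\boldsymbol\lambda\in\mathcal E^+_\kappa(\mathbf A)\cap\mathfrak M^{\boldsymbol\sigma}(\mathbf A,\mathbf a,\mathbf g)$, and moreover $\|\boldsymbol\mu_s-\boldsymbol\lambda\|_{\mathcal E^+_\kappa(\mathbf A)}=\|R\boldsymbol\mu_s-\nu\|_\kappa\to0$.

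It then remains to see that $\boldsymbol\lambda$ is a minimizer. In Case~II, (\ref{C2}) shows $G_{\kappa,\mathbf f}$ is strongly continuous, so $G_{\kappa,\mathbf f}(\boldsymbol\mu_s)\to G_{\kappa,\mathbf f}(\boldsymbol\lambda)$, while $G_{\kappa,\mathbf f}(\boldsymbol\mu_s)\to G^{\boldsymbol\sigma}_{\kappa,\mathbf f}(\mathbf A,\mathbf a,\mathbf g)$ by the definition of a minimizing net; hence equality. In Case~I, $\|R\boldsymbol\mu_s\|_\kappa^2\to\|R\boldsymbol\lambda\|_\kappa^2$ by strong convergence, while $\liminf\langle f_i,\mu^i_s\rangle\geqslant\langle f_i,\lambda^i\rangle$ along the vague subnet by Lemma~\ref{lemma-semi} (recall $f_i\in\Psi(X)$); combining these gives $G_{\kappa,\mathbf f}(\boldsymbol\lambda)\leqslant\lim_sG_{\kappa,\mathbf f}(\boldsymbol\mu_s)=G^{\boldsymbol\sigma}_{\kappa,\mathbf f}(\mathbf A,\mathbf a,\mathbf g)$. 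In either case $G_{\kappa,\mathbf f}(\boldsymbol\lambda)\leqslant G^{\boldsymbol\sigma}_{\kappa,\mathbf f}(\mathbf A,\mathbf a,\mathbf g)<\infty$, so $\boldsymbol\lambda\in\mathcal E^{\boldsymbol\sigma}_{\kappa,\mathbf f}(\mathbf A,\mathbf a,\mathbf g)$ and the reverse inequality is automatic; thus $\boldsymbol\lambda\in\mathfrak S^{\boldsymbol\sigma}_{\kappa,\mathbf f}(\mathbf A,\mathbf a,\mathbf g)$. For the convergence assertion, given any $\boldsymbol\lambda_0\in\mathfrak S^{\boldsymbol\sigma}_{\kappa,\mathbf f}(\mathbf A,\mathbf a,\mathbf g)$ the constant net $\equiv\boldsymbol\lambda_0$ is minimizing, so the Lemma yielding (\ref{fund:}), applied to $(\boldsymbol\mu_s)$ and $(\boldsymbol\lambda_0)$, gives $\lim_s\|\boldsymbol\mu_s-\boldsymbol\lambda_0\|_{\mathcal E^+_\kappa(\mathbf A)}=0$, the claimed strong convergence. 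If finally the $A_i$ are mutually essentially disjoint, then $R$ is injective (Lemma~\ref{inj}); every vague cluster point $\widetilde{\boldsymbol\lambda}$ of $(\boldsymbol\mu_s)$ lies in the vaguely compact class $\mathfrak M^{\boldsymbol\sigma}(\mathbf A,\mathbf a,\mathbf g)$ and satisfies $R\widetilde{\boldsymbol\lambda}=\nu=R\boldsymbol\lambda_0$ by the argument above, hence $\widetilde{\boldsymbol\lambda}=\boldsymbol\lambda_0$; a net in a compact space with a unique cluster point converges to it, so $\boldsymbol\mu_s\to\boldsymbol\lambda_0$ vaguely.

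The main obstacle is precisely the identification step in the second paragraph: strong completeness of the \emph{positive} cone only produces an abstract strong limit $\nu\in\mathcal E^+_\kappa(X)$, and one must recognize $\nu$ as $R\boldsymbol\lambda$ for a vector measure $\boldsymbol\lambda$ admissible for Problem~\ref{pr2}. This is where all three hypotheses are used simultaneously: the emptiness of $I^+$ or $I^-$ keeps the resultants positive (so Theorem~\ref{fu-complete} applies — recall $\mathcal E_\kappa(X)$ itself is strongly incomplete, cf.\ Remark~\ref{remma}), the bound (\ref{boundd}) furnishes the vague cluster point through Lemma~\ref{lemma-rel-cl}, and the comparison of topologies on $\mathcal E^+_\kappa(X)$ lets the strong and vague limits of $(R\boldsymbol\mu_s)$ be matched; dropping the sign restriction or the boundedness of $\boldsymbol\sigma$ destroys the argument, as Theorem~\ref{pr:nons} and Remark~\ref{rem-3} show.
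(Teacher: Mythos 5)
Your argument is correct and follows essentially the same route as the paper's proof: vague compactness of $\mathfrak M^{\boldsymbol\sigma}(\mathbf A,\mathbf a,\mathbf g)$ via Lemma~\ref{lemma-rel-cl}, the strong Cauchy property from Corollary~\ref{cor:fund}, transport through the isometry $R$ combined with the perfectness of $\kappa$ (Theorem~\ref{fu-complete}), identification of the limit as a minimizer via strong continuity in Case~II and vague lower semicontinuity in Case~I, and Lemmas~\ref{lemma:unique:} and~\ref{inj} for convergence to an arbitrary solution and for the vague convergence under essential disjointness. The only differences are cosmetic (you extract the strong limit from completeness first and then match it with a vague cluster point, whereas the paper takes the vague cluster point first and invokes the perfectness property), so no changes are needed.
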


\begin{proof}Assume for definiteness that $I^-=\varnothing$ and fix $(\boldsymbol\mu_s)_{s\in S}\in\mathbb M^{\boldsymbol{\sigma}}_{\kappa,\mathbf f}(\mathbf A,\mathbf a,\mathbf g)$. By Lemma~\ref{lemma-rel-cl}, there is a subnet $(\boldsymbol\mu_t)_{t\in T}$ of $(\boldsymbol\mu_s)_{s\in S}$ converging vaguely to some $\boldsymbol\mu\in\mathfrak M^{\boldsymbol{\sigma}}(\mathbf A,\mathbf a,\mathbf g)$.
The net $(\boldsymbol\mu_t)_{t\in T}$ belongs to $\mathbb M^{\boldsymbol{\sigma}}_{\kappa,\mathbf f}(\mathbf A,\mathbf a,\mathbf g)$, for so does $(\boldsymbol\mu_s)_{s\in S}$, and hence it is strong Cauchy in the semimetric space $\mathcal E^+_\kappa(\mathbf A)$ by Corollary~\ref{cor:fund}. Application of relations (\ref{Re}) and (\ref{isom}) then shows that the net of positive scalar measures $R\boldsymbol\mu_t$, $t\in T$, is strong Cauchy in the metric space $\mathcal E_\kappa^+(A)$, and therefore it is strongly bounded. Furthermore, since $R\boldsymbol\mu_t\to R\boldsymbol\mu$ vaguely in $\mathfrak M^+(X)$, we have $R\boldsymbol\mu_t\otimes R\boldsymbol\mu_t\to R\boldsymbol\mu\otimes R\boldsymbol\mu$ vaguely in $\mathfrak M^+(X\times X)$ \cite[Chapter~3, Section~5, Exercise~5]{B2}. Applying
Lemma~\ref{lemma-semi} to $X\times X$ and $\psi=\kappa$, we thus get $R\boldsymbol\mu\in\mathcal E^+_\kappa(A)$. Moreover, $(R\boldsymbol\mu_t)_{t\in T}$ converges to $R\boldsymbol\mu$ strongly in $\mathcal E_\kappa^+(X)$, which follows from the above in view of the perfectness of $\kappa$. Hence, by (\ref{isom}), $(\boldsymbol\mu_t)_{t\in T}$ converges to $\boldsymbol\mu$ strongly in $\mathcal E_\kappa^+(\mathbf A)$.

In either Case~I or Case~II, we get from (\ref{min-seq:}) and Lemma~\ref{gauss:lsc}
\begin{equation}\label{di-ag}-\infty<G_{\kappa,\mathbf f}(\boldsymbol\mu)\leqslant\lim_{t\in T}\,G_{\kappa,\mathbf f}(\boldsymbol\mu_t)=G_{\kappa,\mathbf f}^{\boldsymbol\sigma}(\mathbf A,\mathbf a,\mathbf g)<\infty,\end{equation}
where the first inequality is valid by (\ref{GII}) and (\ref{GI}), while the last one holds by (\ref{gauss-finite}).
This yields that $\boldsymbol\mu\in\mathcal E_{\kappa,\mathbf f}^{\boldsymbol{\sigma}}(\mathbf A,\mathbf a,\mathbf g)$, and therefore $G_{\kappa,\mathbf f}(\boldsymbol\mu)\geqslant G_{\kappa,\mathbf f}^{\boldsymbol\sigma}(\mathbf A,\mathbf a,\mathbf g)$. It is seen from (\ref{di-ag}) that in fact equality prevails in the last relation, and hence indeed
$\boldsymbol\mu\in\mathfrak S^{\boldsymbol{\sigma}}_{\kappa,\mathbf f}(\mathbf A,\mathbf a,\mathbf g)$.

Since a strong Cauchy net converges strongly to any of its strong cluster points (even in the present case of a semimetric space), it follows from the above that $\boldsymbol\mu_s\to\boldsymbol\mu$ strongly in $\mathcal E_\kappa^+(\mathbf A)$. Finally, if $\boldsymbol\lambda$ is any other element of the class $\mathfrak S^{\boldsymbol{\sigma}}_{\kappa,\mathbf f}(\mathbf A,\mathbf a,\mathbf g)$, then also $\boldsymbol\mu_s\to\boldsymbol\lambda$ strongly in $\mathcal E_\kappa^+(\mathbf A)$ because $\|\boldsymbol\mu-\boldsymbol\lambda\|_{\mathcal E_\kappa^+(\mathbf A)}=0$ according to Lemma~\ref{lemma:unique:}.

It has been shown that any of the vague cluster points of $(\boldsymbol\mu_s)_{s\in S}$ belongs to $\mathfrak S^{\boldsymbol{\sigma}}_{\kappa,\mathbf f}(\mathbf A,\mathbf a,\mathbf g)$. Assuming now that the $A_i$, $i\in I$, are mutually essentially disjoint, we see from the latter assertion of Lemma~\ref{lemma:unique:} that then the vague cluster set of $(\boldsymbol\mu_s)_{s\in S}$ reduces to the measure $\boldsymbol\mu$, chosen at the beginning of the proof. As $\mathfrak M^+(\mathbf A)$ is Hausdorff in the vague topology, $\boldsymbol\mu_s\to\boldsymbol\mu$ also vaguely \cite[Chapter~I, Section~9, n$^\circ$\,1, Corollary]{B1}.
\end{proof}

\section{On the solvability of Problem~\ref{pr2} for Riesz kernels.~I}\label{sec:solvI}

Throughout Sections~\ref{sec:solvI}--\ref{sec:dual}, let $n\geqslant3$, $n\in\mathbb N$, and $\alpha\in(0,n)$ be fixed. On $X=\mathbb R^n$, consider the $\alpha$-{\it Riesz kernel\/} $\kappa(x,y)=\kappa_\alpha(x,y):=|x-y|^{\alpha-n}$ of order $\alpha$.
The $\alpha$-Riesz kernel is known to be strictly positive definite and moreover perfect \cite{D1,D2}, and hence the metric space $\mathcal E^+_{\kappa_\alpha}(\mathbb R^n)$ is complete in the induced strong topology. However, by
Cartan \cite{Car}, the whole pre-Hil\-bert space $\mathcal
E_{\kappa_\alpha}(\mathbb R^n)$ for $\alpha\in(1,n)$ is strongly incomplete (compare with Theorem~\ref{ZU} above, as well as with Theorems~\ref{complete}, \ref{cor-complete} and Remark~\ref{rem-str} below).

From now on we shall write simply $\alpha$ instead of $\kappa_\alpha$ if $\kappa_\alpha$ serves as an index. For example, $c_\alpha(\cdot)=c_{\kappa_\alpha}(\cdot)$ denotes the
$\alpha$-Riesz inner capacity of a set.

\begin{theorem}\label{th-main-comp}Let a generalized condenser\/ $\mathbf A$ be compact and let each of the potentials\/ $\kappa_\alpha(\cdot,\sigma^i)$, $i\in I$, be continuous on\/ $A_i$, $\boldsymbol{\sigma}\in\mathfrak C(\mathbf A)$ being given. Then in either Case\/~{\rm I} or Case\/~{\rm II} Problem\/~{\rm\ref{pr2}} is solvable for any given\/ $\mathbf a$ and\/ $\mathbf g$, and the class\/ $\mathfrak S^{\boldsymbol{\sigma}}_{\alpha,\mathbf f}(\mathbf A,\mathbf a,\mathbf g)$ of all its solutions is vaguely compact. Furthermore, every minimizing sequence\/ $\{\boldsymbol\mu_k\}_{k\in\mathbb N}\in\mathbb M^{\boldsymbol{\sigma}}_{\alpha,\mathbf f}(\mathbf A,\mathbf a,\mathbf g)$ converges to every\/ $\boldsymbol\lambda\in\mathfrak S^{\boldsymbol{\sigma}}_{\alpha,\mathbf f}(\mathbf A,\mathbf a,\mathbf g)$ strongly in\/ $\mathcal E_\alpha^+(\mathbf A)$; and hence also vaguely provided that all the\/ $A_i$, $i\in I$, are mutually essentially disjoint.\end{theorem}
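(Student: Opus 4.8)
The plan is to push the compact case through by combining the vague compactness from Lemma~\ref{lemma-rel-comp} with a \emph{vague continuity} — not merely lower semicontinuity — of the energy on the constrained class, the hypothesis on the potentials $\kappa_\alpha(\cdot,\sigma^i)$ being exactly what makes this available. Since $\mathbb R^n$ is metrizable and countable at infinity and $\mathbf A$ is compact, the vaguely compact set $\mathfrak M^{\boldsymbol{\sigma}}(\mathbf A,\mathbf a,\mathbf g)$ is moreover metrizable, so I may argue throughout with sequences. First I would upgrade the hypothesis: by the classical continuity principle for $\alpha$-Riesz potentials (see \cite[Theorem~1.7]{L}), continuity of $\kappa_\alpha(\cdot,\sigma^i)$ on $A_i$ — which by (\ref{g-mass}) is the support of $\sigma^i$ — entails continuity of $\kappa_\alpha(\cdot,\sigma^i)$ on all of $\mathbb R^n$; being continuous on the compact set $A$ it is there bounded, so $\kappa_\alpha(\sigma^i,\sigma^i)=\langle\kappa_\alpha(\cdot,\sigma^i),\sigma^i\rangle<\infty$, and since $\mu^i\leqslant\sigma^i$ for $\boldsymbol{\mu}\in\mathfrak M^{\boldsymbol{\sigma}}(\mathbf A)$, automatically $\boldsymbol{\mu}\in\mathcal E^+_\alpha(\mathbf A)$.

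The crux is the assertion that $\boldsymbol{\mu}_k\to\boldsymbol{\mu}$ vaguely in $\mathfrak M^{\boldsymbol{\sigma}}(\mathbf A,\mathbf a,\mathbf g)$ implies $R\boldsymbol{\mu}_k\to R\boldsymbol{\mu}$ \emph{strongly} in $\mathcal E_\alpha(\mathbb R^n)$ (equivalently, by (\ref{isom}), $\boldsymbol{\mu}_k\to\boldsymbol{\mu}$ strongly in $\mathcal E^+_\alpha(\mathbf A)$). To prove it I would fix $\ell\in I$ and note that, for $\mu^\ell\leqslant\sigma^\ell$, the potential $\kappa_\alpha(\cdot,\mu^\ell)=\kappa_\alpha(\cdot,\sigma^\ell)-\kappa_\alpha(\cdot,\sigma^\ell-\mu^\ell)$ is continuous on $\mathbb R^n$ (a continuous function minus an l.s.c.\ one, and itself l.s.c.) and dominated by $\kappa_\alpha(\cdot,\sigma^\ell)$; from
\[|\kappa_\alpha(x,\mu^\ell)-\kappa_\alpha(x',\mu^\ell)|\leqslant\int\bigl||x-y|^{\alpha-n}-|x'-y|^{\alpha-n}\bigr|\,d\sigma^\ell(y)\]
together with the finiteness of $\kappa_\alpha(x,\sigma^\ell)$ for every $x$, one gets that the family $\{\kappa_\alpha(\cdot,\mu^\ell):\mu^\ell\leqslant\sigma^\ell\}$ is uniformly bounded and equicontinuous on $A$ (the latter by the established continuity of $\kappa_\alpha(\cdot,\sigma^\ell)$). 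If $\mu^\ell_k\to\mu^\ell$ vaguely with $\mu^\ell_k\leqslant\sigma^\ell$, then $\kappa_\alpha(x,\mu^\ell_k)\to\kappa_\alpha(x,\mu^\ell)$ for each $x$: the bound $\liminf_k\kappa_\alpha(x,\mu^\ell_k)\geqslant\kappa_\alpha(x,\mu^\ell)$ is Lemma~\ref{lemma-semi}, while $\limsup_k\kappa_\alpha(x,\mu^\ell_k)\leqslant\kappa_\alpha(x,\mu^\ell)$ follows by splitting $|x-\cdot|^{\alpha-n}$ into its truncation at a level $M$ (continuous, hence passing to the vague limit) and a tail controlled by $\int(|x-y|^{\alpha-n}-M)^+\,d\sigma^\ell(y)\to0$ as $M\to\infty$. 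By Arzel\`a--Ascoli the convergence $\kappa_\alpha(\cdot,\mu^\ell_k)\to\kappa_\alpha(\cdot,\mu^\ell)$ is then uniform on $A$. Since the masses $\mu^i_k(A_i)$ stay bounded (by (\ref{bbbound})), for all $i,\ell\in I$ one gets $\kappa_\alpha(\mu^i_k,\mu^\ell_k)=\langle\kappa_\alpha(\cdot,\mu^\ell_k),\mu^i_k\rangle\to\langle\kappa_\alpha(\cdot,\mu^\ell),\mu^i\rangle=\kappa_\alpha(\mu^i,\mu^\ell)$ and likewise $\kappa_\alpha(\mu^i_k,\mu^\ell)\to\kappa_\alpha(\mu^i,\mu^\ell)$; weighting by $s_is_\ell$ and summing gives $\kappa_\alpha(\boldsymbol{\mu}_k,\boldsymbol{\mu}_k)\to\kappa_\alpha(\boldsymbol{\mu},\boldsymbol{\mu})$ and $\kappa_\alpha(\boldsymbol{\mu}_k,\boldsymbol{\mu})\to\kappa_\alpha(\boldsymbol{\mu},\boldsymbol{\mu})$, and since $\|R\boldsymbol{\mu}_k-R\boldsymbol{\mu}\|_\alpha^2=\kappa_\alpha(\boldsymbol{\mu}_k,\boldsymbol{\mu}_k)-2\kappa_\alpha(\boldsymbol{\mu}_k,\boldsymbol{\mu})+\kappa_\alpha(\boldsymbol{\mu},\boldsymbol{\mu})$ by (\ref{metr-def}) and (\ref{Re}), the claim follows.

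Granted the crux, the rest is routine. Given $\{\boldsymbol{\mu}_k\}\in\mathbb M^{\boldsymbol{\sigma}}_{\alpha,\mathbf f}(\mathbf A,\mathbf a,\mathbf g)$, extract by Lemma~\ref{lemma-rel-comp} a subsequence converging vaguely to some $\boldsymbol{\mu}\in\mathfrak M^{\boldsymbol{\sigma}}(\mathbf A,\mathbf a,\mathbf g)$; along it $\kappa_\alpha(\boldsymbol{\mu}_k,\boldsymbol{\mu}_k)\to\kappa_\alpha(\boldsymbol{\mu},\boldsymbol{\mu})<\infty$. In Case~I, $\langle\mathbf f,\boldsymbol{\mu}\rangle\leqslant\liminf_k\langle\mathbf f,\boldsymbol{\mu}_k\rangle$ by Lemma~\ref{lemma-semi}, whence $G_{\alpha,\mathbf f}(\boldsymbol{\mu})\leqslant\liminf_k G_{\alpha,\mathbf f}(\boldsymbol{\mu}_k)=G^{\boldsymbol{\sigma}}_{\alpha,\mathbf f}(\mathbf A,\mathbf a,\mathbf g)$; in Case~II, $G_{\alpha,\mathbf f}$ is strongly continuous (Lemma~\ref{gauss:lsc}), so $G_{\alpha,\mathbf f}(\boldsymbol{\mu})=\lim_k G_{\alpha,\mathbf f}(\boldsymbol{\mu}_k)=G^{\boldsymbol{\sigma}}_{\alpha,\mathbf f}(\mathbf A,\mathbf a,\mathbf g)$. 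Either way $\boldsymbol{\mu}\in\mathcal E^{\boldsymbol{\sigma}}_{\alpha,\mathbf f}(\mathbf A,\mathbf a,\mathbf g)$, and the resulting inequality $G_{\alpha,\mathbf f}(\boldsymbol{\mu})\leqslant G^{\boldsymbol{\sigma}}_{\alpha,\mathbf f}(\mathbf A,\mathbf a,\mathbf g)$ must be an equality, so $\boldsymbol{\mu}\in\mathfrak S^{\boldsymbol{\sigma}}_{\alpha,\mathbf f}(\mathbf A,\mathbf a,\mathbf g)$. By Corollary~\ref{cor:fund} the whole sequence $\{\boldsymbol{\mu}_k\}$ is strong Cauchy in $\mathcal E^+_\alpha(\mathbf A)$, hence converges strongly to the cluster point $\boldsymbol{\mu}$, and since $\|\boldsymbol{\lambda}-\boldsymbol{\mu}\|_{\mathcal E^+_\alpha(\mathbf A)}=0$ for every $\boldsymbol{\lambda}\in\mathfrak S^{\boldsymbol{\sigma}}_{\alpha,\mathbf f}(\mathbf A,\mathbf a,\mathbf g)$ (Lemma~\ref{lemma:unique:}), in fact $\boldsymbol{\mu}_k\to\boldsymbol{\lambda}$ strongly for \emph{every} solution. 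The set $\mathfrak S^{\boldsymbol{\sigma}}_{\alpha,\mathbf f}(\mathbf A,\mathbf a,\mathbf g)$ sits inside the vaguely compact $\mathfrak M^{\boldsymbol{\sigma}}(\mathbf A,\mathbf a,\mathbf g)$ and is vaguely closed — a vague limit of solutions is, as a constant minimizing sequence, again a solution by the argument just given — hence vaguely compact. Finally, if the $A_i$, $i\in I$, are mutually essentially disjoint, Lemma~\ref{lemma:unique:} (cf.\ Lemma~\ref{inj}) makes the solution unique, so the nonempty vague cluster set of any $\{\boldsymbol{\mu}_k\}\in\mathbb M^{\boldsymbol{\sigma}}_{\alpha,\mathbf f}(\mathbf A,\mathbf a,\mathbf g)$ reduces to that single solution; as $\mathfrak M^+(\mathbf A)$ is vaguely Hausdorff, the sequence then converges to it vaguely as well \cite[Chapter~I, Section~9, n$^\circ$\,1, Corollary]{B1}.

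The hard part is the crux: turning vague lower semicontinuity of the energy (which would already settle existence in Case~I) into genuine vague continuity of $\boldsymbol{\mu}\mapsto R\boldsymbol{\mu}$ into the strong topology, \emph{uniformly} over measures dominated by $\boldsymbol{\sigma}$. This is precisely where the continuity of the constraint potentials, the domination $\mu^i\leqslant\sigma^i$, and a careful truncation of the Riesz kernel near its singularity are all needed; without continuity of the $\kappa_\alpha(\cdot,\sigma^i)$ the cross term $\kappa_\alpha((R\boldsymbol{\mu})^+,(R\boldsymbol{\mu})^-)$ is only vaguely lower semicontinuous — the wrong direction — which is what forces the noncompact counterpart (Theorem~\ref{th-main}) to rely on the deeper completeness theorems rather than on compactness alone.
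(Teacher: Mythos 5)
Your proof is correct, and its overall architecture coincides with the paper's: vague compactness of $\mathfrak M^{\boldsymbol\sigma}(\mathbf A,\mathbf a,\mathbf g)$ via Lemma~\ref{lemma-rel-comp} (with sequences sufficing by Remark~\ref{rem-net}), continuity of the potentials of constrained measures via Lemma~\ref{cont-pot} and \cite[Theorem~1.7]{L}, identification of a vague cluster point of a minimizing sequence as a solution via Lemma~\ref{gauss:lsc} together with (\ref{GII})--(\ref{GI}), and the same endgame using Corollary~\ref{cor:fund} and Lemma~\ref{lemma:unique:}, including the vague compactness of $\mathfrak S^{\boldsymbol\sigma}_{\alpha,\mathbf f}(\mathbf A,\mathbf a,\mathbf g)$ and the Hausdorff argument in the mutually essentially disjoint case. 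The one place where you genuinely diverge is the ``crux'': the paper obtains the needed energy convergence more economically, by the iterated-limit computation $\lim_m\lim_\ell\kappa_\alpha(\mu^i_{k_m},\mu^j_{k_\ell})=\kappa_\alpha(\lambda^i,\lambda^j)$, which uses only that each individual potential $\kappa_\alpha(\cdot,\mu^i_{k_m})$ and $\kappa_\alpha(\cdot,\lambda^j)$ is continuous, hence bounded, on the compact plates, combined with the strong Cauchy property of the minimizing sequence; you instead prove the stronger statement that on $\mathfrak M^{\boldsymbol\sigma}(\mathbf A,\mathbf a,\mathbf g)$ vague convergence already implies strong convergence of the resultants, via uniform convergence of the potentials obtained from equicontinuity of the family $\{\kappa_\alpha(\cdot,\mu^\ell):\mu^\ell\leqslant\sigma^\ell\}$ and Arzel\`a--Ascoli. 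That buys a clean vague-to-strong continuity statement (of independent interest), but it requires one justification you only gesture at: equicontinuity on $A$, and the uniformity in $x\in A$ of the tail bound $\int(|x-y|^{\alpha-n}-M)^+\,d\sigma^\ell(y)\to0$ as $M\to\infty$, do not follow merely from pointwise continuity of $\kappa_\alpha(\cdot,\sigma^\ell)$; you should add that the truncated potentials $x\mapsto\int\min\bigl(|x-y|^{\alpha-n},M\bigr)\,d\sigma^\ell(y)$ are continuous (here $\sigma^\ell$ is bounded, $A_\ell$ being compact) and increase to the continuous function $\kappa_\alpha(\cdot,\sigma^\ell)$, so that Dini's theorem on the compact set $A$ yields the required uniform tail estimate, after which your equicontinuity and the rest of the argument are complete.
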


Theorem~\ref{th-main-comp} is inspired partly by \cite{BC} and will be proved in Section~\ref{th-main-comp-proof}. The following lemma goes back to \cite{Ra,DS}.

\begin{lemma}\label{cont-pot}Let\/ $\mathbf A$ be an arbitrary\/ {\rm(}not necessarily compact\/{\rm)} generalized condenser, and let each of the\/ $\kappa_\alpha(\cdot,\sigma^i)$, $i\in I$, be continuous on\/ $A_i$, $\boldsymbol{\sigma}\in\mathfrak C(\mathbf A)$ being given. Then for every\/ $\boldsymbol\mu\in\mathfrak M^{\boldsymbol{\sigma}}(\mathbf A)$ and every\/ $i\in I$, $\kappa_\alpha(\cdot,\mu^i)$ is continuous on\/~$\mathbb R^n$.\end{lemma}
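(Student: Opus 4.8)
The plan is to reduce Lemma~\ref{cont-pot} to the classical continuity principle for $\alpha$-Riesz potentials by an elementary lower–semicontinuity splitting, and then to remove the compactness hypothesis in that principle by means of a compact exhaustion together with a uniform estimate on the tail of the potential. Fix $i\in I$ and abbreviate $\mu:=\mu^i$, $\sigma:=\sigma^i$, $A:=A_i=S^{\sigma^i}_X$; we may assume $\mu\ne0$. First I would observe that, since $\mu\leqslant\sigma$, the measure $\sigma-\mu$ is positive, so on $\mathbb R^n$ one has $\kappa_\alpha(\cdot,\sigma)=\kappa_\alpha(\cdot,\mu)+\kappa_\alpha(\cdot,\sigma-\mu)$ with both summands lower semicontinuous (being potentials of positive measures) and, on $A$, dominated by the finite continuous function $\kappa_\alpha(\cdot,\sigma)|_A$, hence finite there. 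Each summand is thus, on $A$, both l.s.c.\ and equal to a continuous function minus an l.s.c.\ function, hence u.s.c., hence continuous; in particular $\kappa_\alpha(\cdot,\mu)|_A$ is continuous and finite, and so is its restriction to $S(\mu)\subset A$, which also yields $\kappa_\alpha(x_1,\mu)<\infty$ for a fixed $x_1\in S(\mu)$.

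Next I would choose compact sets $K_m$ with $K_m\subset\mathrm{int}\,K_{m+1}$ and $\bigcup_m K_m=S(\mu)$, and set $\mu_m:=\mu|_{K_m}\leqslant\mu$. Applying the same splitting on the subset $S(\mu_m)\subset S(\mu)$ to $\kappa_\alpha(\cdot,\mu)=\kappa_\alpha(\cdot,\mu_m)+\kappa_\alpha(\cdot,\mu-\mu_m)$ shows that $\kappa_\alpha(\cdot,\mu_m)$ restricted to its (now compact) support is continuous and finite, and therefore, by the classical continuity principle for $\alpha$-Riesz potentials (see \cite{L}), $\kappa_\alpha(\cdot,\mu_m)$ is continuous on all of $\mathbb R^n$.

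The core of the argument is then to show that $\kappa_\alpha(\cdot,\mu-\mu_m)\to0$ uniformly on every compact $L\subset\mathbb R^n$; granting this, $\kappa_\alpha(\cdot,\mu_m)\to\kappa_\alpha(\cdot,\mu)$ locally uniformly, and a locally uniform limit of continuous functions is continuous, which finishes the proof. To this end I would split $S(\mu)\setminus K_m$ into $Y'_m:=\{y\in S(\mu)\setminus K_m:\mathrm{dist}(y,L)\leqslant1\}$ and $Y''_m:=\{y\in S(\mu)\setminus K_m:\mathrm{dist}(y,L)>1\}$. Since $\{y\in S(\mu):\mathrm{dist}(y,L)\leqslant1\}$ is compact it is absorbed by some $K_{m_0}$, so $Y'_m=\varnothing$ for $m\geqslant m_0$. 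On $\{\mathrm{dist}(\cdot,L)>1\}$ the function $y\mapsto\sup_{x\in L}|x-y|^{\alpha-n}=\mathrm{dist}(y,L)^{\alpha-n}$ is $\mu$-integrable: once $|y-x_1|$ exceeds twice the diameter of $L\cup\{x_1\}$ one has $\mathrm{dist}(y,L)\geqslant|y-x_1|/2$, so it is bounded there by $2^{n-\alpha}|y-x_1|^{\alpha-n}$, which is $\mu$-integrable because $\int|y-x_1|^{\alpha-n}\,d\mu(y)=\kappa_\alpha(x_1,\mu)<\infty$, while over the remaining bounded region it is bounded and $\mu$ is finite. As $\bigcap_m(S(\mu)\setminus K_m)=\varnothing$, dominated convergence gives, for $m\geqslant m_0$,
\[\sup_{x\in L}\kappa_\alpha(x,\mu-\mu_m)\leqslant\int_{Y''_m}\sup_{x\in L}|x-y|^{\alpha-n}\,d\mu(y)\longrightarrow0.\]

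The main obstacle is exactly the non-compactness of the plates: the continuity principle is classically formulated only for compactly supported measures, so the real work is the reduction above — above all the uniform decay of the tail potential — which depends on the finiteness of $\kappa_\alpha(\cdot,\mu^i)$ at a point of $S(\mu^i)$ obtained in the first step. (Alternatively one could invoke a version of the continuity principle valid without a compactness assumption, but I find the reduction given here more self-contained.)
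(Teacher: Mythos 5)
Your proof is correct, and its engine is the same semicontinuity splitting the paper uses: from $\mu^i\leqslant\sigma^i$ write $\kappa_\alpha(\cdot,\mu^i)=\kappa_\alpha(\cdot,\sigma^i)-\kappa_\alpha(\cdot,\sigma^i-\mu^i)$, so that a potential which is automatically l.s.c.\ becomes also u.s.c.\ (continuous minus l.s.c.) wherever $\kappa_\alpha(\cdot,\sigma^i)$ is finite and continuous. The difference is where the continuity principle enters. The paper's proof is two lines: since $S(\sigma^i)=A_i$ by (\ref{g-mass}), the hypothesis says $\kappa_\alpha(\cdot,\sigma^i)$ is continuous on the support of $\sigma^i$, so \cite[Theorem~1.7]{L} upgrades this to continuity on all of $\mathbb R^n$, after which a single global application of the splitting finishes the argument. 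You instead apply the splitting only on $A_i$, getting relative continuity and finiteness of $\kappa_\alpha(\cdot,\mu^i)$ on $S(\mu^i)$, and then globalize by hand: the truncations $\mu^i|_{K_m}$ fall under the compact-support form of the continuity principle, and the tail potentials $\kappa_\alpha(\cdot,\mu^i-\mu^i|_{K_m})$ tend to zero uniformly on each compact $L$, because $\mathrm{dist}(\cdot,L)^{\alpha-n}$ is $\mu^i$-integrable off a neighborhood of $L$ once the potential is finite at one point of the support. I checked the details of that tail estimate (the absorption of $\bigl\{y\in S(\mu^i):\mathrm{dist}(y,L)\leqslant1\bigr\}$ into some $K_{m_0}$, the bound $\mathrm{dist}(y,L)\geqslant|y-x_1|/2$ far from $x_1$, and the dominated-convergence step), and they hold, so the locally uniform limit argument is sound. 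What each route buys: the paper's version is shorter but invokes Landkof's continuity principle for a measure $\sigma^i$ whose support $A_i$ may be noncompact; yours uses only the classical compactly supported statement and in effect re-proves the noncompact case in the situation at hand --- longer, but more self-contained, exactly as you note in your closing remark.
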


\begin{proof} Actually, $\kappa_\alpha(\cdot,\sigma^i)$ is continuous on all of $\mathbb R^n$ by \cite[Theorem~1.7]{L}. Since $\kappa_\alpha(\cdot,\mu^i)$ is l.s.c.\ and since $\kappa_\alpha(\cdot,\mu^i)=\kappa_\alpha(\cdot,\sigma^i)-\kappa_\alpha(\cdot,\sigma^i-\mu^i)$ with $\kappa_\alpha(\cdot,\sigma^i)$ continuous and
$\kappa_\alpha(\cdot,\sigma^i-\mu^i)$ l.s.c., $\kappa_\alpha(\cdot,\mu^i)$ is also upper semicontinuous, hence continuous.\end{proof}

\begin{example}\label{ex-2} Let $\mathbf A=(A_i)_{i\in I}$ be as in Example~\ref{ex-1} (see Figure~\ref{fig3.3}), and let $\alpha\in(0,2)$. Also assume that $\mathbf g=\mathbf 1$ and that either Case~II holds or $f_i(x)<\infty$ n.e.\ on~$A_i$, $i=1,2$. Let $\lambda_i$ denote the (unique) $\kappa_\alpha$-capacitary measure on $A_i$ (see Remark~\ref{remark}); then $\kappa_\alpha(\cdot,\lambda_i)$ is continuous on $\mathbb R^n$ and $S^{\lambda_i}_{\mathbb R^n}=A_i$ \cite[Chapter~II, Section~3, n$^\circ$\,13]{L}. For any $\mathbf a=(a_i)_{i\in I}$ define $\sigma^i:=c_i\lambda_i$, $i\in I$, where $a_i<c_i<\infty$. As $\boldsymbol\sigma=(\sigma^i)_{i\in I}$ clearly has finite $\alpha$-Riesz energy, relation (\ref{gauss-finite}) holds by Lemma~\ref{suff-fin}, and since $\mathbf A$ is compact, Problem~\ref{pr2} admits a solution according to Theorem~\ref{th-main-comp}. Thus, no short-cir\-cuit occurs between the oppositely charged plates of the condenser $\mathbf A$, though they intersect each other over the set $\delta_{\mathbf A}=\{\xi_5,\xi_6\}$.\end{example}

\subsection{Proof of Theorem~\ref{th-main-comp}}\label{th-main-comp-proof}

Fix any $\{\boldsymbol\mu_k\}_{k\in\mathbb N}\in\mathbb M^{\boldsymbol{\sigma}}_{\alpha,\mathbf f}(\mathbf A,\mathbf a,\mathbf g)$; it exists because of the (standing) assumption (\ref{gauss-finite}). By Lemma~\ref{lemma-rel-comp}, any of its vague cluster points $\boldsymbol\lambda$ (which exist) belongs to $\mathfrak M^{\boldsymbol{\sigma}}(\mathbf A,\mathbf a,\mathbf g)$. As $\mathfrak M^{\boldsymbol{\sigma}}(\mathbf A,\mathbf a,\mathbf g)$ is in fact sequentially vaguely compact (see Remark~\ref{rem-net}), one can select  a subsequence $\{\boldsymbol\mu_{k_m}\}_{m\in\mathbb N}$ of $\{\boldsymbol\mu_k\}_{k\in\mathbb N}$ such that
\begin{equation}\label{123}\boldsymbol\mu_{k_m}\to\boldsymbol\lambda\text{ \  vaguely as \ }m\to\infty.\end{equation}
Since by Lemma~\ref{cont-pot} each of $\kappa_\alpha(\cdot,\mu_k^i)$ and $\kappa_\alpha(\cdot,\lambda^i)$, $k\in\mathbb N$, $i\in I$, are continuous and hence bounded on the (compact) set $A_j$, $j\in I$, the preceding display yields
\begin{align*}\lim_{m\to\infty}\,\lim_{\ell\to\infty}\kappa_\alpha(\mu_{k_m}^i,\mu_{k_\ell}^j)&=
\lim_{m\to\infty}\,\lim_{\ell\to\infty}\int\kappa_\alpha(\cdot,\mu_{k_m}^i)\,d\mu_{k_\ell}^j=
\lim_{m\to\infty}\,\int\kappa_\alpha(\cdot,\mu_{k_m}^i)\,d\lambda^j\\{}&=\lim_{m\to\infty}\,\int\kappa_\alpha(\cdot,\lambda^j)\,d\mu_{k_m}^i=\kappa_\alpha(\lambda^j,\lambda^i)<\infty\text{ \ for all \ }i,j\in I.\end{align*}
Hence, $\boldsymbol\lambda\in\mathcal E^+_\alpha(\mathbf A)$ and moreover
\begin{equation}\label{123'}\boldsymbol\mu_{k_m}\to\boldsymbol\lambda\text{ \  strongly as \ }m\to\infty.\end{equation}
We assert that this $\boldsymbol\lambda$ solves Problem~\ref{pr2}.

Applying Lemma~\ref{gauss:lsc}, from (\ref{min-seq:}), (\ref{123}) and (\ref{123'}) we obtain
\[-\infty<G_{\alpha,\mathbf f}(\boldsymbol\lambda)\leqslant\lim_{m\to\infty}\,G_{\alpha,\mathbf f}(\boldsymbol\mu_{k_m})=
G^{\boldsymbol\sigma}_{\alpha,\mathbf f}(\mathbf A,\mathbf a,\mathbf g)<\infty,\]
where the first inequality is valid by (\ref{GII}) and (\ref{GI}), while the last one holds according to the (standing) assumption (\ref{gauss-finite}). Hence $\boldsymbol\lambda\in\mathcal E^{\boldsymbol{\sigma}}_{\alpha,\mathbf f}(\mathbf A,\mathbf a,\mathbf g)$, and $\boldsymbol\lambda\in\mathfrak S^{\boldsymbol{\sigma}}_{\alpha,\mathbf f}(\mathbf A,\mathbf a,\mathbf g)$ follows.

Note that the minimizing sequence $\{\boldsymbol\mu_k\}_{k\in\mathbb N}$ is strong Cauchy by Corollary~\ref{cor:fund}. Since a strong Cauchy sequence converges strongly to any of its strong cluster points, we infer from (\ref{123'}) that $\{\boldsymbol\mu_k\}_{k\in\mathbb N}$ converges to $\boldsymbol\lambda$ strongly. The same holds for any other
$\boldsymbol\nu\in\mathfrak S^{\boldsymbol{\sigma}}_{\alpha,\mathbf f}(\mathbf A,\mathbf a,\mathbf g)$, for $\|\boldsymbol\lambda-\boldsymbol\nu\|_{\mathcal E^+_\alpha(\mathbf A)}=0$ according to Lemma~\ref{lemma:unique:}.

To prove  that $\mathfrak S^{\boldsymbol{\sigma}}_{\alpha,\mathbf f}(\mathbf A,\mathbf a,\mathbf g)$ is vaguely compact, consider a sequence $\{\boldsymbol\lambda_k\}_{k\in\mathbb N}$ of its elements. Since it belongs to $\mathbb M^{\boldsymbol{\sigma}}_{\alpha,\mathbf f}(\mathbf A,\mathbf a,\mathbf g)$, it is clear from what has been shown above that any of the vague cluster points
 of $\{\boldsymbol\lambda_k\}_{k\in\mathbb N}$ belongs to $\mathfrak S^{\boldsymbol{\sigma}}_{\alpha,\mathbf f}(\mathbf A,\mathbf a,\mathbf g)$.

Assume finally that the $A_i$, $i\in I$, are mutually essentially disjoint. Then, by Lemma~\ref{lemma:unique:}, a solution to Problem~\ref{pr2} is unique, which yields that the vague cluster set of the given minimizing sequence $\{\boldsymbol\mu_k\}_{k\in\mathbb N}$ reduces to the unique measure $\boldsymbol\lambda$. Since the
vague topology is Hausdorff, $\boldsymbol{\lambda}$ is
actually the vague limit of $\{\boldsymbol{\mu}_k\}_{k\in\mathbb N}$ \cite[Chapter~I, Section~9, n$^\circ$\,1]{B1}.\qed

\section{On the solvability of Problem~\ref{pr2} for Riesz kernels.~II}\label{sec7}

Let $\overline{Q}$ be the closure of $Q\subset\mathbb R^n$ in $\overline{\mathbb R^n}:=\mathbb R^n\cup\{\omega_{\mathbb R^n}\}$, the one-point compactification of $\mathbb R^n$.
The following theorem provides sufficient conditions for the solvability of Problem~\ref{pr2} in the case where $A_i$, $i\in I$, are not necessarily compact; compare with Theorem~\ref{th-main-comp} above.\footnote{In a particular case of a condenser with two oppositely charged plates some of the results presented in this section have been obtained earlier in~\cite{ZBull}.} Regarding the uniqueness of a solution to Problem~\ref{pr2}, see Lemma~\ref{lemma:unique:}.

\begin{theorem}\label{th-main}
Let the set\/ $\overline{A^+}\cap\overline{A^-}$ consist of at most one point, i.e.
\begin{equation}\label{one}\text{either \ }\overline{A^+}\cap\overline{A^-}=\varnothing,\text{ \ or \ } \overline{A^+}\cap\overline{A^-}=\{x_0\}\quad\text{where \ }x_0\in\overline{\mathbb R^n},\end{equation}
and let\/ the given\/ $\mathbf g$ and\/ $\boldsymbol\sigma\in\mathfrak C(\mathbf A)$ satisfy\/ {\rm(\ref{boundd})}. Then in either Case\/~{\rm I} or Case\/~{\rm II} Problem\/~{\rm\ref{pr2}} is solvable for any given\/ $\mathbf a$, and the class of all its solutions is vaguely compact. Furthermore, every minimizing sequence converges to every\/ $\boldsymbol\lambda\in\mathfrak S^{\boldsymbol{\sigma}}_{\alpha,\mathbf f}(\mathbf A,\mathbf a,\mathbf g)$ strongly in\/ $\mathcal E_\alpha^+(\mathbf A)$; hence also vaguely whenever all the\/ $A_i$, $i\in I$, are mutually essentially disjoint.
\end{theorem}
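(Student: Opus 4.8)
The plan is to follow the pattern of the proofs of Theorem~\ref{th-main-comp} and Theorem~\ref{ex-gnr}, combining vague precompactness of the admissible class (coming from boundedness of the constraint, (\ref{boundd})) with strong completeness of the pertinent semimetric space (coming from hypothesis~(\ref{one})); the latter is exactly where Theorems~\ref{complete} and~\ref{cor-complete} do the work that continuity of potentials did in the compact case.

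\emph{Setup and precompactness.} Fix $\{\boldsymbol\mu_k\}_{k\in\mathbb N}\in\mathbb M^{\boldsymbol{\sigma}}_{\alpha,\mathbf f}(\mathbf A,\mathbf a,\mathbf g)$, which is nonempty by~(\ref{gauss-finite}). Since $\boldsymbol\sigma$ satisfies~(\ref{boundd}), Lemma~\ref{lemma-rel-cl} shows that $\mathfrak M^{\boldsymbol{\sigma}}(\mathbf A,\mathbf a,\mathbf g)$ is vaguely compact, hence (as $\mathbb R^n$ is metrizable and countable at infinity, cf.\ Remark~\ref{rem-net}) sequentially vaguely compact; so one may extract a subsequence with $\boldsymbol\mu_{k_m}\to\boldsymbol\lambda\in\mathfrak M^{\boldsymbol{\sigma}}(\mathbf A,\mathbf a,\mathbf g)$ vaguely. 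On the other hand, Corollary~\ref{cor:fund} shows that $\{\boldsymbol\mu_k\}$ is strong Cauchy in the semimetric space $\mathcal E^+_\alpha(\mathbf A)$.

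\emph{The crucial step: $\boldsymbol\lambda\in\mathcal E^+_\alpha(\mathbf A)$ and $\boldsymbol\mu_{k_m}\to\boldsymbol\lambda$ strongly.} Through the isometry~(\ref{isom}), the resultants $R\boldsymbol\mu_{k_m}$ form a strong Cauchy sequence of signed measures in $\mathcal E_\alpha(\mathbb R^n)$ whose positive and negative parts are carried by $\overline{A^+}$ and $\overline{A^-}$, respectively. Under~(\ref{one}) these two closed subsets of $\overline{\mathbb R^n}$ are disjoint, or meet in a single point $x_0$ (possibly $x_0=\omega_{\mathbb R^n}$) which has $\alpha$-Riesz capacity zero and hence is not charged by measures of finite energy; after grouping the positive and the negative plates, and, when $x_0=\omega_{\mathbb R^n}$, performing a Kelvin inversion about a sphere centered off $A$, this places us in the situation of Theorem~\ref{ZU} as extended by Theorems~\ref{complete} and~\ref{cor-complete}. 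Consequently the relevant semimetric subspace of $\mathcal E^+_\alpha(\mathbf A)$ containing the minimizing sequence is strongly complete, with strong topology finer than the vague one, so $\{\boldsymbol\mu_{k_m}\}$ converges strongly to an element of $\mathcal E^+_\alpha(\mathbf A)$ whose vague limit equals that strong limit; since $\boldsymbol\mu_{k_m}\to\boldsymbol\lambda$ vaguely and vague limits are unique, $\boldsymbol\lambda\in\mathcal E^+_\alpha(\mathbf A)$ and $\boldsymbol\mu_{k_m}\to\boldsymbol\lambda$ strongly in $\mathcal E^+_\alpha(\mathbf A)$.

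\emph{Conclusion.} By Lemma~\ref{gauss:lsc} ($G_{\alpha,\mathbf f}$ is vaguely l.s.c.\ in Case~I, strongly continuous in Case~II) together with~(\ref{min-seq:}), (\ref{GII}) and~(\ref{GI}),
\[-\infty<G_{\alpha,\mathbf f}(\boldsymbol\lambda)\leqslant\lim_{m\to\infty}\,G_{\alpha,\mathbf f}(\boldsymbol\mu_{k_m})=G^{\boldsymbol\sigma}_{\alpha,\mathbf f}(\mathbf A,\mathbf a,\mathbf g)<\infty,\]
whence $\boldsymbol\lambda\in\mathcal E^{\boldsymbol{\sigma}}_{\alpha,\mathbf f}(\mathbf A,\mathbf a,\mathbf g)$ and therefore $\boldsymbol\lambda\in\mathfrak S^{\boldsymbol{\sigma}}_{\alpha,\mathbf f}(\mathbf A,\mathbf a,\mathbf g)$. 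A strong Cauchy sequence converges strongly to any of its strong cluster points, so $\{\boldsymbol\mu_k\}\to\boldsymbol\lambda$ strongly, and by Lemma~\ref{lemma:unique:} ($\|\boldsymbol\lambda-\boldsymbol\nu\|_{\mathcal E^+_\alpha(\mathbf A)}=0$ for every solution $\boldsymbol\nu$) also $\{\boldsymbol\mu_k\}\to\boldsymbol\nu$ strongly for every $\boldsymbol\nu\in\mathfrak S^{\boldsymbol{\sigma}}_{\alpha,\mathbf f}(\mathbf A,\mathbf a,\mathbf g)$. Applying this with an arbitrary sequence of solutions in place of $\{\boldsymbol\mu_k\}$ (such a sequence is automatically minimizing) shows that every vague cluster point of a sequence of solutions is again a solution; hence $\mathfrak S^{\boldsymbol{\sigma}}_{\alpha,\mathbf f}(\mathbf A,\mathbf a,\mathbf g)$ is a vaguely closed subset of the vaguely compact $\mathfrak M^{\boldsymbol{\sigma}}(\mathbf A,\mathbf a,\mathbf g)$, hence itself vaguely compact. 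Finally, if all the $A_i$ are mutually essentially disjoint, Lemma~\ref{lemma:unique:} forces the solution to be unique, so the vague cluster set of $\{\boldsymbol\mu_k\}$ reduces to $\{\boldsymbol\lambda\}$; as the vague topology is Hausdorff, $\boldsymbol\mu_k\to\boldsymbol\lambda$ vaguely. The main obstacle is the crucial step: extracting from the strong Cauchy minimizing sequence a strong limit that is simultaneously its vague limit. Without~(\ref{one}) this can genuinely fail (Theorem~\ref{pr:nons} exhibits a standard condenser for which the infimum is not attained), and it is Theorems~\ref{complete} and~\ref{cor-complete} --- resting on Deny's Fourier-analytic completion of $\mathcal E_{\kappa_\alpha}(\mathbb R^n)$, on the completeness recorded here as Theorem~\ref{ZU}, and on the Kelvin transformation --- that supply the needed completeness; carrying out the reduction to those theorems is the technical heart of the argument.
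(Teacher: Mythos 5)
Your proposal is correct and follows essentially the same route as the paper's own proof: strong Cauchyness of minimizing sequences (Corollary~\ref{cor:fund}), vague compactness of $\mathfrak M^{\boldsymbol{\sigma}}(\mathbf A,\mathbf a,\mathbf g)$ under (\ref{boundd}) via Lemma~\ref{lemma-rel-cl}, the completeness Theorems~\ref{complete} and~\ref{cor-complete} to conclude that the vague cluster point lies in $\mathcal E^{\boldsymbol{\sigma}}_\alpha(\mathbf A,\mathbf a,\mathbf g)$ and is the strong limit, and then Lemma~\ref{gauss:lsc} together with Lemma~\ref{lemma:unique:} for solvability, strong convergence to every solution, vague compactness of the solution set, and the essentially disjoint case. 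The only blemish is your parenthetical sketch of how those completeness theorems are obtained: the Kelvin inversion is performed about $S(x_0,1)$ precisely when the touching point $x_0$ is \emph{finite}, reducing that case to touching at $\omega_{\mathbb R^n}$, which is the case covered directly by Theorem~\ref{ZU}; since you invoke Theorems~\ref{complete} and~\ref{cor-complete} as established results, this misdescription does not affect your argument.
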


Theorem~\ref{th-main} is sharp in the sense that it is no longer valid if assumption (\ref{boundd}) is omitted (see Theorem~\ref{th:sharp} below).

The proof of Theorem~\ref{th-main} is given in Section~\ref{sec-proof-main}; it is based on the approach that has been developed in Sections~\ref{sec-cond} and~\ref{sec-Gauss-c}, as well as on Theorems~\ref{complete} and~\ref{cor-complete} below providing strong completeness results for semimetric subspaces of $\mathcal E^+_\alpha(\mathbf A)$, properly chosen. In turn, the proofs of Theorems~\ref{complete} and~\ref{cor-complete} substantially use Theorem~\ref{ZU} on the strong completeness of $\mathcal E^+_\alpha(\mathbf A)$ in the case of a standard condenser.

\begin{example}\label{ex-2'} Let $\mathbf A=(A_i)_{i\in I}$ be as in Example~\ref{ex-1'} (see Figure~\ref{fig3.3'}) and let $\alpha=2$; then $c_2(A_i)<\infty$, $i=1,2$  \cite[Example~8.2]{ZPot2}, and hence there exists a (unique) $\kappa_2$-cap\-acitary measure $\lambda_i$ on $A_i$ (see Remark~\ref{remark}). Let $\mathbf g=\mathbf 1$, and let
either Case~II hold or $f_i(x)<\infty$ n.e.\ on $A_i$, $i=1,2$.  For any $\mathbf a=(a_1,a_2)$ define $\sigma^i:=c_i\lambda_i$, $i=1,2$, where $a_i<c_i<\infty$.\footnote{Under the assumptions of Example~\ref{ex-2'}, (\ref{g-mass}) holds since for the given $A_i$, $i=1,2$, and $\kappa=\kappa_2$ we have $S^{\lambda_i}_{\mathbb R^3}=A_i$. This is  seen from the construction of the $\kappa_2$-cap\-acitary measure described in \cite[Theorem~5.1]{L}.} Then $\langle g_i,\sigma^i\rangle=\sigma^i(A_i)=c_i<\infty$, and hence (\ref{boundd}) is fulfilled. As $\boldsymbol\sigma=(\sigma^i)_{i\in I}$ has finite Newtonian energy, (\ref{gauss-finite}) holds by Lemma~\ref{suff-fin}, and Problem~\ref{pr2} admits a solution according to Theorem~\ref{th-main}, which is unique by Lemma~\ref{lemma:unique:}. Thus, no short-cir\-cuit occurs between $A_1$ and $A_2$, though these oppositely charged conductors touch each other at the point $\omega_{\mathbb R^3}$. Note that, although $c_2(A)<\infty$, Theorem~6.2 from \cite{Z9} on the solvability of Problem~\ref{pr2} for a standard condenser cannot be applied, which is caused by the unboundedness of the Coulomb kernel $\kappa_2$ on~$A_1\times A_2$.\end{example}

\subsection{Strong completeness theorems for semimetric subspaces of $\mathcal E^+_\alpha(\mathbf A)$}\label{sec-str}
The (convex) sets
$\mathcal E^+_{\alpha}(\mathbf A,\leqslant\!\mathbf a,\mathbf g):=\mathcal E^+_{\alpha}(\mathbf A)\cap\mathfrak M^+(\mathbf A,\leqslant\!\mathbf a,\mathbf g)$ and
$\mathcal E^{\boldsymbol\sigma}_{\alpha}(\mathbf A,\mathbf a,\mathbf g):=\mathcal E^+_{\alpha}(\mathbf A)\cap\mathfrak M^{\boldsymbol\sigma}(\mathbf A,\mathbf a,\mathbf g)$,
$\boldsymbol\sigma\in\mathfrak C(\mathbf A)$ being given,
can certainly be thought of as semimetric subspaces of $\mathcal E^+_{\alpha}(\mathbf A)$; their topologies will likewise be called {\it strong}.

\begin{theorem}\label{complete} Suppose that a generalized condenser\/ $\mathbf A$ satisfies condition\/ {\rm(\ref{one})}. Then for any given\/ $\mathbf g$ and \/ $\mathbf a$ the semimetric space\/ $\mathcal E^+_\alpha(\mathbf A,\leqslant\!\mathbf a,\mathbf g)$ is strongly complete. In more detail, any strong Cauchy sequence\/ $\{\boldsymbol\mu_k\}_{k\in\mathbb N}\subset\mathcal E^+_\alpha(\mathbf A,\leqslant\!\mathbf a,\mathbf g)$ converges strongly to any of its vague cluster points. If moreover all the\/ $A_i$, $i\in I$, are mutually essentially disjoint, then the strong topology on the space\/ $\mathcal E^+_\alpha(\mathbf A,\leqslant\!\mathbf a,\mathbf g)$ is finer than the induced vague topology.\end{theorem}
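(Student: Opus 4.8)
The plan is to transfer the problem, via the isometry of Theorem~\ref{lemma:semimetric}, from $\mathcal E^+_\alpha(\mathbf A,\leqslant\!\mathbf a,\mathbf g)$ to the pre-Hilbert space $\mathcal E_\alpha(\mathbb R^n)$, and there to invoke the strong completeness (and strong--finer--than--vague property) of the metric subspace formed by signed measures whose positive and negative parts are carried by two \emph{disjoint} closed sets --- that is, Theorem~\ref{ZU} and \cite[Theorem~1, Corollary~1]{ZUmzh} --- after an inversion that reduces the at-most-one-point set $\overline{A^+}\cap\overline{A^-}$ from (\ref{one}) to the empty case. First I would fix a strong Cauchy sequence $\{\boldsymbol\mu_k\}\subset\mathcal E^+_\alpha(\mathbf A,\leqslant\!\mathbf a,\mathbf g)$ and an arbitrary vague cluster point $\boldsymbol\mu$ of it; by Lemma~\ref{lemma-rel-comp} such $\boldsymbol\mu$ exists and lies in $\mathfrak M^+(\mathbf A,\leqslant\!\mathbf a,\mathbf g)$, and by Remark~\ref{rem-net} one may choose a subsequence with $\boldsymbol\mu_{k_m}\to\boldsymbol\mu$ vaguely, hence $\nu_m:=R\boldsymbol\mu_{k_m}\to R\boldsymbol\mu$ vaguely in $\mathfrak M(\mathbb R^n)$. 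By (\ref{isom}) the sequence $\{\nu_m\}$ is strong Cauchy in $\mathcal E_\alpha(\mathbb R^n)$, with $\nu_m^+$ carried by $A^+\setminus\delta_{\mathbf A}$ and $\nu_m^-$ by $A^-\setminus\delta_{\mathbf A}$, and with $|\nu_m|(\mathbb R^n)$ uniformly bounded by (\ref{bbbound}).

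Suppose first $\overline{A^+}\cap\overline{A^-}=\varnothing$ or $=\{\omega_{\mathbb R^n}\}$; then $A^+$ and $A^-$ are disjoint closed subsets of $\mathbb R^n$. By \cite[Theorem~1]{ZUmzh} the space of all $\nu\in\mathcal E_\alpha(\mathbb R^n)$ with $\nu^+$, $\nu^-$ supported by $A^+$, $A^-$ is strongly complete, and by \cite[Corollary~1]{ZUmzh} its strong topology is finer than the vague one; hence $\{\nu_m\}$ converges strongly --- and therefore also vaguely --- to some $\nu_*$ in that space, and comparison of vague limits forces $\nu_*=R\boldsymbol\mu$. Thus $R\boldsymbol\mu\in\mathcal E_\alpha(\mathbb R^n)$; since $(R\boldsymbol\mu)^+=\sum_{i\in I^+}\mu^i$ and $(R\boldsymbol\mu)^-=\sum_{i\in I^-}\mu^i$ dominate the individual $\mu^i$, each $\mu^i$ has finite energy, so $\boldsymbol\mu\in\mathcal E^+_\alpha(\mathbf A)$, and Lemma~\ref{lemma-rel-comp} gives $\boldsymbol\mu\in\mathcal E^+_\alpha(\mathbf A,\leqslant\!\mathbf a,\mathbf g)$; finally $\|\boldsymbol\mu_{k_m}-\boldsymbol\mu\|_{\mathcal E^+_\alpha(\mathbf A)}=\|\nu_m-R\boldsymbol\mu\|_\alpha\to0$.

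In the remaining case $\overline{A^+}\cap\overline{A^-}=\{x_0\}$ with $x_0\in\mathbb R^n$, I would apply the Kelvin transform $K$ centered at $x_0$. Since a measure of finite energy assigns no mass to the point $x_0$ (which has zero $\alpha$-Riesz capacity), $K$ is an isometric involution of $\mathcal E_\alpha(\mathbb R^n)$ (see \cite{L}), and it carries $A^\pm\setminus\{x_0\}$ onto subsets of $\mathbb R^n$ whose closures in $\overline{\mathbb R^n}$ meet only at the image of $x_0$, i.e.\ onto disjoint closed sets; applying the preceding paragraph to $\{K\nu_m\}$ and transporting the resulting strong limit back by $K$ yields $\nu_*\in\mathcal E_\alpha(\mathbb R^n)$ with $\nu_m\to\nu_*$ strongly, and a short computation --- testing $\nu_m=K(K\nu_m)$ against $\varphi\in C_0(\mathbb R^n)$ vanishing in a neighbourhood of $x_0$, for which $y\mapsto\varphi(Ky)\,|y-x_0|^{\alpha-n}$ again lies in $C_0(\mathbb R^n)$ --- shows that $\nu_*$ agrees with $R\boldsymbol\mu$ on $\mathbb R^n\setminus\{x_0\}$. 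One then has to check that no $\mu^i$ charges $x_0$, after which $\nu_*=R\boldsymbol\mu$ and the argument concludes as before. This last point is the main obstacle: $K$ is not vaguely continuous and may exchange mass between $x_0$ and infinity, so the absence of concentration at $x_0$ must be extracted from the strong Cauchy condition itself. Here the hypothesis that $\overline{A^+}\cap\overline{A^-}$ be a single point is decisive --- because $A^+$ and $A^-$ meet only at $x_0$, an uncancelled accumulation of oppositely charged mass near $x_0$ along the sequence would force $\|R\boldsymbol\mu_k-R\boldsymbol\mu_l\|_\alpha$ to be unbounded --- and making this local energy estimate precise is the technical heart of the proof.

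In all cases $\boldsymbol\mu$ is a strong cluster point of the whole Cauchy sequence $\{\boldsymbol\mu_k\}$, so $\{\boldsymbol\mu_k\}$ converges strongly to $\boldsymbol\mu$; running the argument with an arbitrary vague cluster point then gives the first assertion. For the addendum, if all the $A_i$ are mutually essentially disjoint then $R$ is injective (Lemma~\ref{inj}), so $\|\cdot\|_{\mathcal E^+_\alpha(\mathbf A)}$ is a genuine metric; for $\boldsymbol\mu_k\to\boldsymbol\mu$ strongly the sequence is strong Cauchy, every vague cluster point of it is a strong limit and hence equals $\boldsymbol\mu$ by uniqueness, and the vague compactness from Lemma~\ref{lemma-rel-comp} together with the Hausdorff property of the vague topology yields $\boldsymbol\mu_k\to\boldsymbol\mu$ vaguely, which is the desired comparison of topologies.
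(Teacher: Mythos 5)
Your overall strategy (reduce to the standard-condenser completeness result of \cite[Theorem~1, Corollary~1]{ZUmzh} via Lemma~\ref{lemma-rel-comp}, Remark~\ref{rem-net} and the isometry (\ref{isom}), treating the case $\overline{A^+}\cap\overline{A^-}=\{x_0\}$, $x_0\in\mathbb R^n$, by a Kelvin transform) is the same as the paper's, and your first case and the final deduction of the two assertions are correct. But in the case $x_0\in\mathbb R^n$ you leave a genuine gap, and you acknowledge it yourself: you never prove that the strong limit of the Kelvin-transformed sequence coincides with the transform of $R\boldsymbol\mu$, equivalently that no component of the vague cluster point $\boldsymbol\mu$ charges $x_0$. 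The mechanism you propose for closing it --- extracting ``absence of concentration at $x_0$'' from the strong Cauchy condition, on the grounds that an uncancelled accumulation of oppositely charged mass near $x_0$ would make $\|R\boldsymbol\mu_k-R\boldsymbol\mu_l\|_\alpha$ blow up --- is not a proof and is in fact the wrong lever. The individual components $\mu_k^i$ are \emph{not} strong Cauchy (only the resultants are), and oppositely signed masses can approach one another with vanishing energy (this is exactly the phenomenon of Theorem~\ref{pr:nons}), so the energy seminorm by itself cannot exclude a simultaneous concentration of positive-plate and negative-plate mass at $x_0$; a ``local energy estimate'' of the kind you gesture at is not available.

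The paper closes this step by a different and simpler device, which is where the hypothesis $\boldsymbol\mu_k\in\mathcal E^+_\alpha(\mathbf A,\leqslant\!\mathbf a,\mathbf g)$ (rather than mere finiteness of energy) is really used: the Kelvin transformation is applied \emph{componentwise} to the positive measures $\mu_{k_m}^i$, whose total masses are uniformly bounded by (\ref{bbbound}) thanks to $\langle g_i,\mu^i\rangle\leqslant a_i$ and $g_{i,\inf}>0$. Vague convergence $\mu_{k_m}^i\to\mu^i$ together with this uniform mass bound yields, by \cite[Lemma~4.3]{L}, the vague convergence $\boldsymbol\mu_{k_m}^*\to\boldsymbol\mu^*$ in (\ref{vague'}) (intuitively, mass escaping to infinity contributes negligibly near $x_0$ in the image because of the factor $|x-x_0|^{\alpha-n}$ with $\alpha<n$); since $\{\boldsymbol\mu_{k_m}^*\}$ is strong Cauchy in $\mathcal E^+_\alpha(\mathbf A^*)$ by the isometry (\ref{pres}) and $\mathbf A^*$ is a standard condenser, Theorem~\ref{ZU} identifies the strong limit with $\boldsymbol\mu^*$, and transporting back by the involution gives (\ref{belongs}) and (\ref{conv-str}) with no separate discussion of a possible charge at $x_0$. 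So the missing ingredient in your write-up is precisely the pair ``uniform mass bound (\ref{bbbound}) $+$ \cite[Lemma~4.3]{L}, applied to the components,'' not an energy estimate for the resultants; as it stands, your argument for the case $x_0\in\mathbb R^n$ is incomplete.
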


We first outline the scheme of the proof of Theorem~\ref{complete}. In view of Lemma~\ref{lemma-rel-comp} on the vague compactness of $\mathfrak M^+(\mathbf A,\leqslant\!\mathbf a,\mathbf g)$, we can assume that a strong Cauchy sequence $\{\boldsymbol{\mu}_k\}_{k\in\mathbb N}\subset\mathcal E^+_\alpha(\mathbf A,\leqslant\!\mathbf a,\mathbf g)$ converges vaguely to $\boldsymbol\mu\in\mathfrak M^+(\mathbf A,\leqslant\!\mathbf a,\mathbf g)$. It remains to show that $\boldsymbol{\mu}_k\to\boldsymbol\mu$ in the strong topology of $\mathcal E^+_\alpha(\mathbf A)$, which by the isometry between $\mathcal E^+_\alpha(\mathbf A)$ and its $R$-image is equivalent to the assertion $R\boldsymbol{\mu}_k\to R\boldsymbol\mu$ strongly in $\mathcal E_\alpha(\mathbb R^n)$. The difficulty appearing here is the strong {\it incompleteness\/} of $\mathcal E_\alpha(\mathbb R^n)$. However, if $\overline{A^+}\cap\overline{A^-}$ consists of at most $\omega_{\mathbb R^n}$, the completeness of the metric space of all $\nu\in\mathcal E_\alpha(\mathbb R^n)$ such that $\nu^\pm$ are supported by $A^\pm$ was shown in \cite[Theorem~1]{ZUmzh}. The remaining case $\overline{A^+}\cap\overline{A^-}=\{x_0\}$, $x_0\ne\omega_{\mathbb R^n}$, is reduced to the case $\overline{A^+}\cap\overline{A^-}=\{\omega_{\mathbb R^n}\}$ with the aid of the Kelvin transformation relative to $S(x_0,1)$.

\begin{proof} Fix a strong Cauchy sequence $\{\boldsymbol{\mu}_k\}_{k\in\mathbb N}\subset\mathcal E^+_\alpha(\mathbf A,\leqslant\!\mathbf a,\mathbf g)$. By Lemma~\ref{lemma-rel-comp}, for any of its vague cluster points $\boldsymbol\mu$ (which exist) we have $\boldsymbol{\mu}\in\mathfrak M^+(\mathbf A,\leqslant\!\mathbf a,\mathbf g)$. As $\mathfrak M^+(\mathbf A,\leqslant\!\mathbf a,\mathbf g)$ is sequentially vaguely closed (see Remark~\ref{rem-net}), one can choose a  subsequence
$\{\boldsymbol{\mu}_{k_m}\}_{m\in\mathbb N}$ of $\{\boldsymbol{\mu}_k\}_{k\in\mathbb N}$ converging vaguely to the measure $\boldsymbol{\mu}$, i.e.
\begin{equation}\label{conv-vag}\mu_{k_m}^i\to\mu^i\text{ \ vaguely in \ }\mathfrak M(X), \ i\in I.\end{equation}
It is obvious that $\{\boldsymbol{\mu}_{k_m}\}_{m\in\mathbb N}$ is likewise strong Cauchy in $\mathcal E^+_{\alpha}(\mathbf A)$.

We proceed by showing that $\kappa_\alpha(\boldsymbol{\mu},\boldsymbol{\mu})$ is finite, hence
\begin{equation}\label{belongs}\boldsymbol{\mu}\in\mathcal E^+_\alpha(\mathbf A,\leqslant\!\mathbf a,\mathbf g),\end{equation}
and moreover that $\boldsymbol{\mu}_{k_m}\to\boldsymbol{\mu}$ strongly as $m\to\infty$, i.e.
\begin{equation}\label{conv-str}\lim_{m\to\infty}\,\|\boldsymbol{\mu}_{k_m}-\boldsymbol{\mu}\|_{\mathcal E^+_{\alpha}(\mathbf A)}=0.\end{equation}

Assume first that $\overline{A^+}\cap\overline{A^-}$ either is empty or coincides with $\{\omega_{\mathbb R^n}\}$. Then $\mathbf A$ forms a standard condenser in $\mathbb R^n$ and hence, by (\ref{conv-vag}),  $R\boldsymbol{\mu}_{k_m}\to R\boldsymbol{\mu}$ (as $m\to\infty$) in the vague topology of $\mathfrak M(\mathbb R^n)$. Noting that $\{R\boldsymbol{\mu}_{k_m}\}_{m\in\mathbb N}$ is a strong Cauchy sequence in $\mathcal E_\alpha(\mathbb R^n)$, we conclude from \cite[Theorem~1 and Corollary~1]{ZUmzh} (see also Theorem~\ref{ZU} above) that there exists a unique $\eta\in\mathcal E_\alpha(\mathbb R^n)$ such that
\[R\boldsymbol{\mu}_{k_m}\to\eta\quad\text{strongly and vaguely as \ }m\to\infty.\]
As the vague topology on $\mathfrak M(\mathbb R^n)$ is Hausdorff, we thus have $\eta=R\boldsymbol{\mu}$, which in view of (\ref{Re}), (\ref{isom}) and the last display results in (\ref{belongs}) and (\ref{conv-str}).

We next proceed by analyzing the case
\begin{equation}\label{ex}\overline{A^+}\cap\overline{A^-}=\{x_0\}\quad\text{where \ }x_0\in\mathbb R^n.\end{equation}
Consider the inversion $I_{x_0}$ with respect to $S(x_0,1)$; namely, each point $x\ne
x_0$ is mapped to the point $x^*$ on the ray through $x$ which issues from $x_0$, determined uniquely by
\[|x-x_0|\cdot|x^*-x_0|=1.\]
This is a self-homeomorphism of $\mathbb
R^n\setminus\{x_0\}$; furthermore,
\begin{equation}\label{inv}|x^*-y^*|=\frac{|x-y|}{|x-x_0||y-x_0|}.\end{equation}
Extend it to a self-homeomorphism of $\overline{\mathbb R^n}$ by setting $I_{x_0}(x_0)=\omega_{\mathbb R^n}$ and $I_{x_0}(\omega_{\mathbb R^n})=x_0$.

To each (signed) scalar measure $\nu\in\mathfrak M(\mathbb R^n)$ with
$\nu(\{x_0\})=0$, in particular for every $\nu\in\mathcal E_\alpha(\mathbb R^n)$, there corresponds the Kelvin transform
$\nu^*\in\mathfrak M(\mathbb R^n)$ by means of the
formula
\[d\nu^*(x^*)=|x-x_0|^{\alpha-n}\,d\nu(x),\quad x^*\in\mathbb R^n\]
(see~\cite{R} or \cite[Chapter IV, Section 5, n$^\circ$\,19]{L}). Then, in consequence of (\ref{inv}),
\begin{equation*}\label{KP}\kappa_\alpha(x^*,\nu^*)=|x-x_0|^{n-\alpha}\kappa_\alpha(x,\nu),\quad x^*\in\mathbb R^n,\end{equation*}
and therefore
\begin{equation}\label{K}\kappa_\alpha(\nu^*,\nu_1^*)=\kappa_\alpha(\nu,\nu_1)\end{equation}
for every $\nu_1\in\mathfrak M(\mathbb R^n)$ that does not have an atomic mass at $x_0$. It is clear that the Kelvin transformation is additive and that it is an involution, i.e.
\begin{align}\label{A}\bigl(\nu+\nu_1\bigr)^*&=
\nu^*+\nu_1^*,\\
\label{AA}(\nu^*)^*&=\nu.\end{align}

Write $A_i^*:=I_{x_0}\bigl(\,\overline{A_i}\,\bigr)\cap\mathbb R^n$ and ${\rm sign}\,A_i^*:={\rm sign}\,A_i=s_i$ for each $i\in I$; then $\mathbf A^*=(A_i^*)_{i\in I}$ forms a standard condenser in $\mathbb R^n$, which is seen from (\ref{ex}) in view of the properties of~$I_{x_0}$.

Applying the Kelvin transformation to each of the components $\nu^i$ of any given $\boldsymbol{\nu}=(\nu^i)_{i\in I}\in\mathcal E_\alpha^+(\mathbf A)$ we get $\boldsymbol{\nu}^*:=(\nu^i)^*_{i\in I}\in\mathfrak M^+(\mathbf A^*)$.
Based on Lemma~\ref{l-Ren}, identity (\ref{isom}) and relations (\ref{K})--(\ref{AA}), we also see that
the $\alpha$-Riesz energy of $\boldsymbol{\nu}^*$ is finite, and furthermore
\begin{equation}\label{pres}\|\boldsymbol{\nu}_1^*-\boldsymbol{\nu}_2^*\|_{\mathcal E^+_\alpha(\mathbf A^*)}=\|\boldsymbol{\nu}_1-\boldsymbol{\nu}_2\|_{\mathcal E^+_\alpha(\mathbf A)}\quad\text{for all \ }\boldsymbol{\nu}_1,\boldsymbol{\nu}_2\in\mathcal E^+_\alpha(\mathbf A).\end{equation}
Summarizing the above, because of (\ref{AA}) we arrive at the following observation: {\it the Kelvin transformation is a bijective isometry of\/ $\mathcal E^+_\alpha(\mathbf A)$ onto\/ $\mathcal E^+_\alpha(\mathbf A^*)$}.

Let $\boldsymbol{\mu}_{k_m}$, $m\in\mathbb N$, and $\boldsymbol{\mu}$ be the measures chosen at the beginning of the proof. In view of (\ref{bbbound}) and (\ref{conv-vag}), for each $i\in I$ one can apply \cite[Lemma~4.3]{L} to
$\mu_{k_m}^i$, $m\in\mathbb N$, and $\mu^i$, and consequently
\begin{equation}\label{vague'}\boldsymbol{\mu}_{k_m}^*\to\boldsymbol{\mu}^*\quad\text{vaguely as \
}m\to\infty.\end{equation}
But $\{\boldsymbol{\mu}_{k_m}^*\}_{m\in\mathbb N}$ is a strong Cauchy sequence in $\mathcal E_\alpha^+(\mathbf A^*)$, which is clear from (\ref{pres}). This together with (\ref{vague'}) implies with the aid of Theorem~\ref{ZU} that $\boldsymbol{\mu}^*\in\mathcal E_\alpha^+(\mathbf A^*)$ and also that
\[\lim_{m\to\infty}\,\|\boldsymbol{\mu}_{k_m}^*-\boldsymbol{\mu}^*\|_{\mathcal E_\alpha^+(\mathbf A^*)}=0.\]
Another application of the above observation then leads to (\ref{belongs}) and (\ref{conv-str}), as was to be proved.

In turn, (\ref{conv-str}) implies that $\boldsymbol{\mu}_k\to\boldsymbol{\mu}$ strongly in $\mathcal E^+_\alpha(\mathbf A)$ as $k\to\infty$, for $\{\boldsymbol{\mu}_k\}_{k\in\mathbb N}$ is strong Cauchy and hence converges strongly to any of its strong cluster points.
It has thus been established that $\{\boldsymbol{\mu}_k\}_{k\in\mathbb N}$ converges strongly to any of its vague cluster points, which is the first assertion of the theorem. Assume now that all the $A_i$, $i\in I$, are mutually essentially disjoint. Then $\|\boldsymbol{\mu}_1-\boldsymbol{\mu}_2\|_{\mathcal E^+_{\alpha}(\mathbf A)}$ is a metric (Theorem~\ref{lemma:semimetric}), and hence $\boldsymbol{\mu}$ has to be the unique vague cluster point of $\{\boldsymbol{\mu}_k\}_{k\in\mathbb N}$. Since the
vague topology is Hausdorff, $\boldsymbol{\mu}$ is
actually the vague limit of $\{\boldsymbol{\mu}_k\}_{k\in\mathbb N}$ \cite[Chapter~I, Section~9, n$^\circ$\,1]{B1}.\end{proof}

\begin{theorem}\label{cor-complete}Given\/ $\mathbf A$, $\mathbf g$, and\/ $\boldsymbol\sigma\in\mathfrak C(\mathbf A)$, assume that\/ {\rm(\ref{boundd})} and\/ {\rm(\ref{one})} both hold. Then for every vector\/ $\mathbf a$ the semimetric space\/ $\mathcal E^{\boldsymbol\sigma}_\alpha(\mathbf A,\mathbf a,\mathbf g)$ is strongly complete.\end{theorem}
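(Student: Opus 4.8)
The plan is to obtain Theorem~\ref{cor-complete} as a short consequence of Theorem~\ref{complete}, in whose proof the hard analytic work already resides (the Kelvin transformation argument reducing the case $\overline{A^+}\cap\overline{A^-}=\{x_0\}$ with $x_0\in\mathbb R^n$ to the case of a point at infinity), combined with the vague compactness of the constrained class supplied by Lemma~\ref{lemma-rel-cl}. The key observation is that $\mathcal E^{\boldsymbol\sigma}_\alpha(\mathbf A,\mathbf a,\mathbf g)\subset\mathcal E^+_\alpha(\mathbf A,\leqslant\!\mathbf a,\mathbf g)$, so any strong Cauchy sequence in the former is strong Cauchy in the latter; Theorem~\ref{complete} then does almost everything, and the only extra input needed is to locate a vague cluster point of the sequence \emph{inside} $\mathcal E^{\boldsymbol\sigma}_\alpha(\mathbf A,\mathbf a,\mathbf g)$, which is where assumption (\ref{boundd}) enters.

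Concretely, I would fix a strong Cauchy sequence $\{\boldsymbol\mu_k\}_{k\in\mathbb N}\subset\mathcal E^{\boldsymbol\sigma}_\alpha(\mathbf A,\mathbf a,\mathbf g)$. Since $\{\boldsymbol\mu_k\}_{k\in\mathbb N}\subset\mathfrak M^{\boldsymbol\sigma}(\mathbf A,\mathbf a,\mathbf g)$ and (\ref{boundd}) holds, Lemma~\ref{lemma-rel-cl} guarantees that $\mathfrak M^{\boldsymbol\sigma}(\mathbf A,\mathbf a,\mathbf g)$ is vaguely compact, so (cf.\ Remark~\ref{rem-net}) the sequence admits a vague cluster point $\boldsymbol\mu$ belonging to $\mathfrak M^{\boldsymbol\sigma}(\mathbf A,\mathbf a,\mathbf g)$; in particular $\mu^i\leqslant\sigma^i$ and $\langle g_i,\mu^i\rangle=a_i$ for every $i\in I$. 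Next I would view $\{\boldsymbol\mu_k\}_{k\in\mathbb N}$ as a strong Cauchy sequence in $\mathcal E^+_\alpha(\mathbf A,\leqslant\!\mathbf a,\mathbf g)$ having $\boldsymbol\mu$ among its vague cluster points; since (\ref{one}) is assumed, Theorem~\ref{complete} applies and yields $\boldsymbol\mu\in\mathcal E^+_\alpha(\mathbf A,\leqslant\!\mathbf a,\mathbf g)$ (hence $\kappa_\alpha(\mu^i,\mu^i)<\infty$ for all $i\in I$) together with $\|\boldsymbol\mu_k-\boldsymbol\mu\|_{\mathcal E^+_\alpha(\mathbf A)}\to0$. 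Combining the finiteness of the energy of $\boldsymbol\mu$ with the constraints recorded above gives $\boldsymbol\mu\in\mathcal E^{\boldsymbol\sigma}_\alpha(\mathbf A,\mathbf a,\mathbf g)$, so the sequence converges strongly to a point of the space, which is the asserted strong completeness.

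I do not expect a genuine obstacle, the real difficulty lying in Theorem~\ref{complete} itself. The one point demanding care is that one cannot simply invoke the strong completeness of $\mathcal E^+_\alpha(\mathbf A,\leqslant\!\mathbf a,\mathbf g)$ and be done: its vague cluster point could a priori fail to satisfy the equality constraints $\langle g_i,\mu^i\rangle=a_i$, because vague lower semicontinuity of $\boldsymbol\mu\mapsto\langle g_i,\mu^i\rangle$ only preserves the inequality $\leqslant a_i$, with a possible loss of mass near the point at infinity. It is exactly assumption (\ref{boundd})---through Lemma~\ref{lemma-rel-cl}, whose proof shows that $\langle g_i1_{A_i\setminus K},\mu_k^i\rangle$ is uniformly small once $K\subset A_i$ is a large compact set, since $\sigma^i$ has finite $g_i$-mass---that rules out this escape of mass and forces the vague cluster point to remain in the constrained class $\mathcal E^{\boldsymbol\sigma}_\alpha(\mathbf A,\mathbf a,\mathbf g)$.
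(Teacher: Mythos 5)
Your proposal is correct and follows essentially the same route as the paper: the paper's proof likewise fixes a strong Cauchy sequence in $\mathcal E^{\boldsymbol\sigma}_\alpha(\mathbf A,\mathbf a,\mathbf g)$, uses Lemma~\ref{lemma-rel-cl} (via (\ref{boundd})) to place a vague cluster point in $\mathfrak M^{\boldsymbol\sigma}(\mathbf A,\mathbf a,\mathbf g)$, and then invokes Theorem~\ref{complete} (via (\ref{one})) to get finite energy and strong convergence to that cluster point. Your extra remarks about the possible escape of mass at infinity and the role of (\ref{boundd}) are a correct elaboration of why the constrained class, rather than just $\mathcal E^+_\alpha(\mathbf A,\leqslant\!\mathbf a,\mathbf g)$, captures the limit.
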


\begin{proof}Fix a strong Cauchy sequence $\{\boldsymbol{\mu}_k\}_{k\in\mathbb N}\subset\mathcal E^{\boldsymbol\sigma}_\alpha(\mathbf A,\mathbf a,\mathbf g)$. By Lemma~\ref{lemma-rel-cl}, any of its vague cluster points $\boldsymbol{\mu}$ (which exist) belongs to $\mathfrak M^{\boldsymbol\sigma}(\mathbf A,\mathbf a,\mathbf g)$, while according to Theorem~\ref{complete} it has finite $\alpha$-Riesz energy and moreover $\boldsymbol{\mu}_k\to\boldsymbol{\mu}$ strongly as $k\to\infty$.\end{proof}

\begin{remark}Theorem~\ref{cor-complete} does not remain valid if assumption (\ref{boundd}) is omitted from its hypotheses. This is seen from the proof of Theorem~\ref{th:sharp} (see Section~\ref{sec:sharp} below).\end{remark}

\begin{remark}\label{rem-str}Since either of the semimetric spaces $\mathcal
E^+_\alpha(\mathbf A,\leqslant\!\mathbf a,\mathbf g)$ and $\mathcal E^{\boldsymbol\sigma}_\alpha(\mathbf A,\mathbf a,\mathbf g)$ is isometric to its
$R$-image, Theorems~\ref{complete} and~\ref{cor-complete} have singled out strongly
complete topological subspaces of the pre-Hilbert space $\mathcal
E_\alpha(\mathbb R^n)$, whose elements are (signed) Radon measures. This is of independent interest because, by Cartan, $\mathcal E_\alpha(\mathbb R^n)$ is strongly incomplete.\end{remark}

\subsection{Proof of Theorem~\ref{th-main}}\label{sec-proof-main}
Fix any $\{\boldsymbol\mu_k\}_{k\in\mathbb N}\in\mathbb M^{\boldsymbol{\sigma}}_{\alpha,\mathbf f}(\mathbf A,\mathbf a,\mathbf g)$; it exists because of assumption (\ref{gauss-finite}), and it is strong Cauchy in the semimetric space $\mathcal
E^{\boldsymbol{\sigma}}_\alpha(\mathbf A,\mathbf a,\mathbf g)$ by Corollary~\ref{cor:fund}. Since $\mathfrak M^+(\mathbf A)$ is sequentially vaguely closed (see Remark~\ref{rem-net}), by Lemma~\ref{lemma-rel-cl} there is a subsequence
$\{\boldsymbol{\mu}_{k_m}\}_{m\in\mathbb N}$ of $\{\boldsymbol{\mu}_k\}_{k\in\mathbb N}$ converging vaguely to some $\boldsymbol\mu\in\mathfrak M^{\boldsymbol{\sigma}}(\mathbf A,\mathbf a,\mathbf g)$, while by Theorem~\ref{cor-complete} we actually have $\boldsymbol\mu\in\mathcal
E^{\boldsymbol{\sigma}}_\alpha(\mathbf A,\mathbf a,\mathbf g)$ and
\begin{equation}\label{starr1}\lim_{k\to\infty}\,\|\boldsymbol{\mu}_k-\boldsymbol{\mu}\|_{\mathcal E^+_\alpha(\mathbf A)}=0.\end{equation}
Also note that, by relations (\ref{gauss-finite}), (\ref{min-seq:}) and Lemma~\ref{gauss:lsc},
\[-\infty<G_{\alpha,\mathbf f}(\boldsymbol{\mu})\leqslant\lim_{m\to\infty}\,G_{\alpha,\mathbf f}(\boldsymbol{\mu}_{k_m})=G^{\boldsymbol{\sigma}}_{\alpha,\mathbf f}(\mathbf A,\mathbf a,\mathbf g)<\infty,\]
the first inequality being valid according to (\ref{GII}) and (\ref{GI}). Thus $\boldsymbol{\mu}\in\mathcal E^{\boldsymbol{\sigma}}_{\alpha,\mathbf f}(\mathbf A,\mathbf a,\mathbf g)$ and therefore $G_{\alpha,\mathbf f}(\boldsymbol{\mu})\geqslant
G^{\boldsymbol{\sigma}}_{\alpha,\mathbf f}(\mathbf A,\mathbf a,\mathbf g)$. All this combined implies that $\boldsymbol{\mu}\in\mathfrak S^{\boldsymbol{\sigma}}_{\alpha,\mathbf f}(\mathbf A,\mathbf a,\mathbf g)$.

To verify that $\mathfrak S^{\boldsymbol{\sigma}}_{\alpha,\mathbf f}(\mathbf A,\mathbf a,\mathbf g)$ is vaguely compact, fix a sequence
$\{\boldsymbol\lambda_k\}_{k\in\mathbb N}$ of its elements. By Lemma~\ref{lemma:unique:}, it is strong Cauchy in $\mathcal E^{\boldsymbol{\sigma}}_\alpha(\mathbf A,\mathbf a,\mathbf g)$, and the same arguments as above show that the (nonempty) vague cluster set of $\{\boldsymbol\lambda_k\}_{k\in\mathbb N}$ is contained in $\mathfrak S^{\boldsymbol{\sigma}}_{\alpha,\mathbf f}(\mathbf A,\mathbf a,\mathbf g)$.

If $\boldsymbol\lambda$ is any element of $\mathfrak S^{\boldsymbol{\sigma}}_{\alpha,\mathbf f}(\mathbf A,\mathbf a,\mathbf g)$, then by Lemma~\ref{lemma:unique:}, $\boldsymbol\lambda$ belongs to the $R$-equivalence class $[\boldsymbol{\mu}]$, which in view of (\ref{starr1}) implies that $\boldsymbol\mu_k\to\boldsymbol\lambda$ strongly in $\mathcal E_\alpha^+(\mathbf A)$. Assuming now that all the $A_i$, $i\in I$, are mutually essentially disjoint, we see from the last assertion of Theorem~\ref{complete} that $\boldsymbol\mu_k\to\boldsymbol\lambda$ also vaguely.\qed

\subsection{On the sharpness of Theorem~\ref{th-main}}\label{sec:sharp}

The purpose of this section is to show that Theorem~\ref{th-main} on the solvability of Problem~\ref{pr2} is no longer valid if assumption (\ref{boundd}) is dropped from its hypotheses.

\begin{definition}[{\rm see  \cite[Theorem~VII.13]{Brelo2}}] A closed set $F\subset\mathbb R^n$ is said to be {\it $\alpha$-thin
at\/} $\omega_{\mathbb R^n}$ if either $F$ is compact, or the inverse of $F$ relative to $S(0,1)$ has $x=0$ as an $\alpha$-irregular boundary point (cf.\ \cite[Theorem~5.10]{L}).\end{definition}

\begin{remark}\label{rem-thin} Alternatively, by Wiener's criterion of $\alpha$-irregularity of a point, a closed set $F\subset\mathbb R^n$ is $\alpha$-thin at $\omega_{\mathbb R^n}$ if and only if
\[\sum_{k\in\mathbb N}\,\frac{c_\alpha(F_k)}{q^{k(n-\alpha)}}<\infty,\]
where $q>1$ and $F_k:=F\cap\{x\in\mathbb R^n: q^k\leqslant|x|<q^{k+1}\}$. Since, by \cite[Lemma~5.5]{L}, $c_\alpha(F)=\infty$ is equivalent to the relation
\[\sum_{k\in\mathbb N}\,c_\alpha(F_k)=\infty,\]
one can define a closed set $Q\subset\mathbb R^n$ with $c_\alpha(Q)=\infty$, but $\alpha$-thin at $\omega_{\mathbb R^n}$ (see also \cite[pp.~276--277]{Ca2}).
In the case $n=3$ and $\alpha=2$, such a $Q$ can be given as follows:
\[Q:=\bigl\{x\in\mathbb R^3: \ 0\leqslant x_1<\infty, \ x_2^2+x_3^2\leqslant\exp(-2x_1^r)\text{ \ with \ }r\in(0,1]\bigr\};\]
note that $Q$ thus defined has finite $c_\alpha(\cdot)$ if $r$ in its definition is ${}>1$ \cite[Example~8.2]{ZPot2}.\end{remark}

Assume for simplicity that $\mathbf g=\mathbf1$, $\mathbf a=\mathbf1$ and $\mathbf f=\mathbf 0$.
Furthermore, let $0<\alpha\leqslant2$, $I^+=\{1\}$, $I^-=\{2\}$, and let $A_2\subset\mathbb R^n$ be a closed set with $c_\alpha(A_2)=\infty$, though $\alpha$-thin at $\omega_{\mathbb R^n}$ (see Remark~\ref{rem-thin}). Assume moreover that the (open) set $D:=A_2^c$ is connected and that $A_1$ is a compact subset of $D$ with $c_\alpha(A_1)>0$. Given the (standard) condenser $\mathbf A:=(A_1,A_2)$ and a constraint $\boldsymbol\sigma\in\mathfrak C(\mathbf A)$, let $\mathcal E^{\boldsymbol\sigma}_\alpha(\mathbf A,\mathbf 1)$ stand for the class of vector measures admissible in Problem~\ref{pr2} with those data.
The sharpness of condition (\ref{boundd}) for the validity of Theorem~\ref{th-main} is illustrated by the following assertion.

\begin{theorem}\label{th:sharp}Under the above assumptions there exists a constraint\/ $\boldsymbol\sigma\in\mathfrak C(\mathbf A)$ with\/ $1<\sigma^1(A_1)<\infty$ and\/ $\sigma^2(A_2)=\infty$ such that
\[\kappa_\alpha(\boldsymbol\nu,\boldsymbol\nu)>\inf_{\boldsymbol\mu\in\mathcal E^{\boldsymbol\sigma}_\alpha(\mathbf A,\mathbf 1)}\,\kappa_\alpha(\boldsymbol\mu,\boldsymbol\mu)=:w^{\boldsymbol\sigma}_\alpha(\mathbf A,\mathbf 1)\text{ \ for every \ }\boldsymbol\nu\in\mathcal E^{\boldsymbol\sigma}_\alpha(\mathbf A,\mathbf 1).\]
\end{theorem}

Crucial for the proof to be given below is to show that for a certain $\boldsymbol\sigma\in\mathfrak C(\mathbf A)$, $w^{\boldsymbol\sigma}_\alpha(\mathbf A,\mathbf 1)$ equals the $G$-energy of the $G$-capacitary measure $\lambda$ on $A_1$, where $G$ denotes the $\alpha$-Green kernel on $D$. Intuitively this is clear since for any given $\mu\in\mathcal E^+_\alpha(A_1,1)$, $\mu-\mu'$ minimizes the $\alpha$-Riesz energy among all $\mu-\nu$, $\nu$ ranging over $\mathcal E^+_\alpha(A_2)$. (Here and in the sequel $\mu'$ denotes the {\it $\alpha$-Riesz balayage\/} of $\mu\in\mathfrak M^+(\mathbb R^n)$ onto $A_2$, uniquely determined in the frame of the classical approach by \cite[Theorem~3.6]{FZ}.) And since $\|\mu-\mu'\|_\alpha=\|\mu\|_G$, further minimizing over $\mu\in\mathcal E^+_\alpha(A_1,1)$ (equivalently, over $\mathcal E^+_g(A_1,1)$) leads to the claimed equality. However, $q:=1-\lambda'(A_2)>0$, because $A_2$ is $\alpha$-thin at $\omega_{\mathbb R^n}$. Thus, in view of $c_\alpha(A_2)=\infty$, one can choose $\tau_\ell\in\mathcal E^+_\alpha(A_2,q)$, $\ell\in\mathbb N$, so that $\|\tau_\ell\|_\alpha\to0$ as $\ell\to\infty$ and $S^{\tau_\ell}_{\mathbb R^n}\subset B(0,\ell)^c$. With a constraint $\boldsymbol\sigma$ properly chosen, the sequence $(\lambda,\lambda'+\tau_\ell)\in\mathcal E^+_\alpha(\mathbf A;\mathbf 1)$, $\ell\in\mathbb N$, is therefore minimizing, but it converges vaguely and strongly to $(\lambda,\lambda')$ which is not admissible.

\begin{proof}Denote by $G=G^\alpha_D$ the $\alpha$-Green kernel on the locally compact space $D$, defined by
\[G^\alpha_D(x,y):=\kappa_\alpha(x,\varepsilon_y)-\kappa_\alpha(x,\varepsilon_y'), \ x,y\in D,\]
$\varepsilon_y$ being the unit Dirac measure at a point $y$ \cite{L,FZ}. Since $c_\alpha(A_1)$ is ${}>0$, so is $c_G(A_1)$ \cite[Lemma~2.6]{DFHSZ}, and hence, by the compactness of $A_1$, there is $\lambda\in\mathcal E^+_G(A_1,1)$ with
\[G(\lambda,\lambda)=c_G(A_1)^{-1}<\infty.\]
In fact, such a $\lambda$ is unique, for the $\alpha$-Green kernel is strictly positive definite \cite[Theorem~4.9]{FZ}. As $S^\lambda_D$ is compact, it is seen from \cite[Lemmas~3.5, 3.6]{DFHSZ2} that both $\lambda$ and $\lambda'$ have finite $\alpha$-Riesz energy and moreover
\[\|\lambda\|_G=\|\lambda-\lambda'\|_\alpha.\]
Finally, since $D^c=A_2$ is $\alpha$-thin at $\omega_{\mathbb R^n}$, from \cite[Theorem~3.21]{FZ} (see also the earlier papers \cite[Theorem~B]{Z0} and \cite[Theorem~4]{Z1}) we get
\begin{equation}\label{q}q:=\lambda(A_1)-\lambda'(A_2)>0.\end{equation}

Consider an exhaustion of $A_2$ by an increasing sequence of compact sets $K_\ell$, $\ell\in\mathbb N$. Since $c_\alpha(A_2)=\infty$, the strict positive definiteness of the $\alpha$-Riesz kernel and the subadditivity of $c_\alpha(\cdot)$ on universally measurable sets yield $c_\alpha(A_2\setminus
K_\ell)=\infty$ for all $\ell\in\mathbb N$. Hence, for every $\ell$ one can choose a measure
$\tau_\ell\in\mathcal E_\alpha^+(A_2\setminus K_\ell,q)$ with compact support so that
\[\lim_{\ell\to\infty}\,\|\tau_\ell\|_\alpha=0.\]
Certainly, there is no loss of generality in assuming $K_\ell\cup S_{\mathbb R^n}^{\tau_\ell}\subset K_{\ell+1}$.

Choose a constraint
\[\sigma^1:=\lambda+\delta_1,\quad\sigma^2:=\lambda'+\sum_{\ell\in\mathbb N}\,\tau_\ell+\delta_2,\]
where $\delta_i$, $i=1,2$, is a positive bounded Radon measure whose (closed) support coincides with $A_i$.\footnote{Such a $\delta_i$ can be constructed as follows. Consider a sequence of points $x_j$ of $A_i$ which is dense in $A_i$ and define $\delta_i=\sum_{j\in\mathbb N}\,2^{-j}\varepsilon_{x_j}$. Actually, the summands $\delta_i$, $i=1,2$, are added only in order to satisfy (\ref{g-mass}).}
We assert that the problem of minimizing $\kappa_\alpha(\boldsymbol\mu,\boldsymbol\mu)$ over the class $\mathcal E^{\boldsymbol\sigma}_\alpha(\mathbf A,\mathbf 1)$ with the constraint $\boldsymbol\sigma$ thus defined is unsolvable.

It follows from the above that $\{\boldsymbol\mu_\ell\}_{\ell\in\mathbb N}$ with $\mu_\ell^1=\lambda$ and $\mu_\ell^2=\lambda'+\tau_\ell$, $\ell\in\mathbb N$, belongs to $\mathcal E^{\boldsymbol\sigma}_\alpha(\mathbf A,\mathbf 1)$, so that
\begin{equation}\label{limmm}\kappa_\alpha(\boldsymbol\mu_\ell,\boldsymbol\mu_\ell)\geqslant w_\alpha^{\boldsymbol\sigma}(\mathbf A,\mathbf 1)\quad\text{for all \ }\ell\in\mathbb N,\end{equation}
and moreover
\begin{equation}\label{limm}\lim_{\ell\to\infty}\,\kappa_\alpha(\boldsymbol\mu_\ell,\boldsymbol\mu_\ell)=\lim_{\ell\to\infty}\,\|\lambda-\lambda'-\tau_\ell\|^2_\alpha=
\|\lambda-\lambda'\|^2_\alpha=
\|\lambda\|^2_G.\end{equation}
On the other hand, for any $\zeta\in\mathcal E^+_\alpha(\mathbb R^n)$ the balayage $\zeta'$ is in fact the orthogonal projection of $\zeta$ onto the convex cone $\mathcal E^+_\alpha(D^c)$, i.e.
\begin{equation*}\|\zeta-\zeta'\|_\alpha<\|\zeta-\nu\|_\alpha\quad\text{for all \ }\nu\in\mathcal E^+_\alpha(D^c), \ \nu\ne\zeta'\end{equation*}
(see \cite[Theorem~4.12]{Fu5} or \cite[Theorem~3.1]{FZ}). For any $\boldsymbol\mu\in\mathcal E^{\boldsymbol\sigma}_\alpha(\mathbf A,\mathbf 1)$ we therefore obtain
\begin{equation}\label{li}\kappa_\alpha(\boldsymbol\mu,\boldsymbol\mu)=\|\mu^1-\mu^2\|^2_\alpha\geqslant\|\mu^1-(\mu^1)'\|^2_\alpha=
\|\mu^1\|^2_G\geqslant\|\lambda\|^2_G,\end{equation}
which in view of the arbitrary choice of $\boldsymbol\mu\in\mathcal E^{\boldsymbol\sigma}_\alpha(\mathbf A,\mathbf 1)$ yields
$w_\alpha^{\boldsymbol\sigma}(\mathbf A,\mathbf 1)\geqslant\|\lambda\|^2_G$. As the converse inequality holds in consequence of (\ref{limmm}) and (\ref{limm}), we actually have
\[w_\alpha^{\boldsymbol\sigma}(\mathbf A,\mathbf 1)=\|\lambda\|^2_G.\]

To complete the proof, assume on the contrary that the extremal problem under consideration is solvable. Then there exists the (unique) $\boldsymbol\mu\in\mathcal E^{\boldsymbol\sigma}_\alpha(\mathbf A,\mathbf 1)$ such that all the inequalities in (\ref{li}) are in fact equalities. But this is possible only provided that both
$\lambda=\mu^1$ and $\lambda'(D^c)=1=\lambda(A_1)$ hold, which contradicts (\ref{q}).\end{proof}

\section{On continuity of the minimizers $\boldsymbol\lambda^{\boldsymbol{\sigma}}_{\mathbf A}$ with
respect to $(\mathbf{A},\boldsymbol{\sigma})$}\label{sec:cont}

Recall that we are working with the $\alpha$-Riesz kernel $\kappa_\alpha(x,y)=|x-y|^{\alpha-n}$ of order $\alpha\in(0,n)$ on $\mathbb R^n$, $n\geqslant 3$. Given an arbitrary (generalized) condenser $\mathbf A=(A_i)_{i\in I}$, fix a sequence of (generalized) condensers $\mathbf A_\ell:=(A_i^\ell)_{i\in I}$, $\ell\in\mathbb N$, with ${\rm sign}\,A_i^\ell={\rm sign}\,A_i$ such that
\[A_i^{\ell+1}\subset A_i^{\ell}\text{ \ and \ }A_i=\bigcap_{k\in\mathbb N}\,A_i^k\text{ \ for any $i\in I$, \ $\ell\in\mathbb N$}.\]
Fix also constraints $\boldsymbol\sigma=(\sigma^i)_{i\in I}\in\mathfrak C(\mathbf A)$ and $\boldsymbol\sigma_\ell=(\sigma_\ell^i)_{i\in I}\in\mathfrak C(\mathbf A_\ell)$ with the properties that $\sigma_{\ell}^i\geqslant\sigma_{\ell+1}^i\geqslant\sigma^i$ for all $\ell\in\mathbb N$ and $i\in I$, and
\begin{equation}\label{eq:vague}\boldsymbol\sigma_\ell\to\boldsymbol\sigma\text{ \ vaguely as \ }\ell\to\infty.\end{equation}
Then the following statement on continuity holds.

\begin{theorem}\label{th:cont}Assume in addition that for a certain\/ $\ell_0\in\mathbb N$ all the hypotheses of Theorem\/~{\rm\ref{th-main-comp}} or Theorem\/~{\rm\ref{th-main}} hold for\/ $\mathbf A_{\ell_0}$ and\/ $\boldsymbol\sigma_{\ell_0}$ in place of\/ $\mathbf A$ and\/ $\boldsymbol\sigma$.
Then
\begin{equation}\label{sigma}
G^{\boldsymbol{\sigma}}_{\alpha,\mathbf{f}}(\mathbf{A},\mathbf{a},\mathbf{g})=
\lim_{\ell\to\infty}\,G^{\boldsymbol{\sigma}_\ell}_{\alpha,\mathbf{f}}(\mathbf{A}_\ell,\mathbf{a},\mathbf{g}).\end{equation}
Fix\/ $\boldsymbol{\lambda}^{\boldsymbol{\sigma}}_{\mathbf{A}}\in\mathfrak
S^{\boldsymbol{\sigma}}_{\alpha,\mathbf{f}}(\mathbf{A},\mathbf{a},\mathbf{g})$, and for every\/ $\ell\geqslant\ell_0$ fix\/
$\boldsymbol{\lambda}^{\boldsymbol{\sigma}_{\ell}}_{\mathbf{A}_{\ell}}\in\mathfrak
S^{\boldsymbol{\sigma}_{\ell}}_{\alpha,\mathbf{f}}(\mathbf{A}_{\ell},\mathbf{a},\mathbf{g})$;
such solutions to Problem\/~{\rm\ref{pr2}} with the corresponding data exist. Then for every $m\geqslant\ell_0$ the {\rm(}nonempty{\rm)} vague cluster set of the sequence\/
$\bigl\{\boldsymbol{\lambda}^{\boldsymbol{\sigma}_{\ell}}_{\mathbf{A}_{\ell}}\bigr\}_{\ell\geqslant m}$ is contained in\/ $\mathfrak
S^{\boldsymbol{\sigma}}_{\alpha,\mathbf{f}}(\mathbf{A},\mathbf{a},\mathbf{g})$.
Furthermore,
$\boldsymbol{\lambda}^{\boldsymbol{\sigma}_{\ell}}_{\mathbf{A}_{\ell}}\to
\boldsymbol{\lambda}^{\boldsymbol{\sigma}}_{\mathbf{A}}$ strongly in\/ $\mathcal
E_\alpha^+(\mathbf{A}_{m})$, i.e.
\begin{equation*}\label{sigma1}\lim_{\ell\to\infty}\,\|\boldsymbol{\lambda}^{\boldsymbol{\sigma}_{\ell}}_{\mathbf{A}_{\ell}}-
\boldsymbol{\lambda}^{\boldsymbol{\sigma}}_{\mathbf{A}}\|_{\mathcal
E_\alpha^+(\mathbf{A}_{m})}=0,
\end{equation*}
and hence also vaguely provided that all the\/ $A_i$, $i\in I$, are mutually essentially disjoint.
\end{theorem}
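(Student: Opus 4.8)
The plan is to exploit the monotonicity built into the hypotheses and to reduce everything to the fixed ``large'' semimetric space $\mathcal E^+_\alpha(\mathbf A_{\ell_0})$. First I would record that, since $A_i^{\ell'}\subset A_i^\ell$ and $\sigma^i\leqslant\sigma^i_{\ell'}\leqslant\sigma^i_\ell$ for $\ell_0\leqslant\ell\leqslant\ell'$, and since a measure carried by $A_i$ is carried by every $A_i^\ell$, one has the chain
\[\mathcal E^{\boldsymbol\sigma}_{\alpha,\mathbf f}(\mathbf A,\mathbf a,\mathbf g)\subset\mathcal E^{\boldsymbol\sigma_{\ell'}}_{\alpha,\mathbf f}(\mathbf A_{\ell'},\mathbf a,\mathbf g)\subset\mathcal E^{\boldsymbol\sigma_{\ell}}_{\alpha,\mathbf f}(\mathbf A_{\ell},\mathbf a,\mathbf g)\subset\mathcal E^{\boldsymbol\sigma_{\ell_0}}_{\alpha,\mathbf f}(\mathbf A_{\ell_0},\mathbf a,\mathbf g).\]
Hence $\ell\mapsto G^{\boldsymbol\sigma_\ell}_{\alpha,\mathbf f}(\mathbf A_\ell,\mathbf a,\mathbf g)$ is nondecreasing on $\{\ell\geqslant\ell_0\}$ and bounded above by $G^{\boldsymbol\sigma}_{\alpha,\mathbf f}(\mathbf A,\mathbf a,\mathbf g)<\infty$, so $L:=\lim_{\ell\to\infty}G^{\boldsymbol\sigma_\ell}_{\alpha,\mathbf f}(\mathbf A_\ell,\mathbf a,\mathbf g)$ exists and $L\leqslant G^{\boldsymbol\sigma}_{\alpha,\mathbf f}(\mathbf A,\mathbf a,\mathbf g)$. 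Next I would note that the hypotheses of Theorem~\ref{th-main-comp} (resp. Theorem~\ref{th-main}) persist for $\mathbf A_\ell,\boldsymbol\sigma_\ell$ whenever $\ell\geqslant\ell_0$: condition~(\ref{one}) because $\overline{A^+_\ell}\cap\overline{A^-_\ell}\subset\overline{A^+_{\ell_0}}\cap\overline{A^-_{\ell_0}}$; condition~(\ref{boundd}) because $\langle g_i,\sigma^i_\ell\rangle\leqslant\langle g_i,\sigma^i_{\ell_0}\rangle<\infty$; the finiteness~(\ref{gauss-finite}) because the admissible class in question contains the nonempty class $\mathcal E^{\boldsymbol\sigma}_{\alpha,\mathbf f}(\mathbf A,\mathbf a,\mathbf g)$; and, in the compact case, continuity of $\kappa_\alpha(\cdot,\sigma^i_\ell)$ on $A^\ell_i$ from Lemma~\ref{cont-pot} applied to $\mathbf A_{\ell_0},\boldsymbol\sigma_{\ell_0}$ (as $\sigma^i_\ell\leqslant\sigma^i_{\ell_0}$). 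Thus every $\boldsymbol\lambda^{\boldsymbol\sigma_\ell}_{\mathbf A_\ell}$, $\ell\geqslant\ell_0$, exists and $G_{\alpha,\mathbf f}(\boldsymbol\lambda^{\boldsymbol\sigma_\ell}_{\mathbf A_\ell})=G^{\boldsymbol\sigma_\ell}_{\alpha,\mathbf f}(\mathbf A_\ell,\mathbf a,\mathbf g)$.

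The second step is to show that $\{\boldsymbol\lambda^{\boldsymbol\sigma_\ell}_{\mathbf A_\ell}\}_{\ell\geqslant\ell_0}$ is strong Cauchy in $\mathcal E^+_\alpha(\mathbf A_{\ell_0})$. For $\ell_0\leqslant\ell\leqslant\ell'$ both $\boldsymbol\lambda^{\boldsymbol\sigma_\ell}_{\mathbf A_\ell}$ and $\boldsymbol\lambda^{\boldsymbol\sigma_{\ell'}}_{\mathbf A_{\ell'}}$ lie in the convex set $\mathcal E^{\boldsymbol\sigma_\ell}_{\alpha,\mathbf f}(\mathbf A_\ell,\mathbf a,\mathbf g)$, so, repeating the convexity plus parallelogram computation from the proof of Lemma~\ref{lemma:unique:} on the $R$-images,
\[0\leqslant\|\boldsymbol\lambda^{\boldsymbol\sigma_\ell}_{\mathbf A_\ell}-\boldsymbol\lambda^{\boldsymbol\sigma_{\ell'}}_{\mathbf A_{\ell'}}\|^2_{\mathcal E^+_\alpha(\mathbf A_{\ell_0})}\leqslant 2\bigl(G^{\boldsymbol\sigma_{\ell'}}_{\alpha,\mathbf f}(\mathbf A_{\ell'},\mathbf a,\mathbf g)-G^{\boldsymbol\sigma_\ell}_{\alpha,\mathbf f}(\mathbf A_\ell,\mathbf a,\mathbf g)\bigr),\]
which tends to $0$ as $\ell,\ell'\to\infty$ since $G^{\boldsymbol\sigma_\ell}_{\alpha,\mathbf f}(\mathbf A_\ell,\mathbf a,\mathbf g)\to L$.

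The third step passes to the limit. All $\boldsymbol\lambda^{\boldsymbol\sigma_\ell}_{\mathbf A_\ell}$, $\ell\geqslant\ell_0$, lie in $\mathfrak M^{\boldsymbol\sigma_{\ell_0}}(\mathbf A_{\ell_0},\mathbf a,\mathbf g)$, which is vaguely compact by Lemma~\ref{lemma-rel-cl} (resp. Lemma~\ref{lemma-rel-comp}). Fixing $m\geqslant\ell_0$ and extracting a subsequence of $\{\boldsymbol\lambda^{\boldsymbol\sigma_\ell}_{\mathbf A_\ell}\}_{\ell\geqslant m}$ converging vaguely to some $\boldsymbol\lambda\in\mathfrak M^{\boldsymbol\sigma_{\ell_0}}(\mathbf A_{\ell_0},\mathbf a,\mathbf g)$, I would, in the setting of Theorem~\ref{th-main}, invoke Theorem~\ref{cor-complete} for $(\mathbf A_{\ell_0},\boldsymbol\sigma_{\ell_0})$, and, in the setting of Theorem~\ref{th-main-comp}, reproduce the continuity-of-potentials argument of its proof (all $\kappa_\alpha(\cdot,\mu^i)$ being continuous on $\mathbb R^n$ for $\boldsymbol\mu\leqslant\boldsymbol\sigma_{\ell_0}$ associated with $\mathbf A_{\ell_0}$, by Lemma~\ref{cont-pot}), to conclude $\kappa_\alpha(\boldsymbol\lambda,\boldsymbol\lambda)<\infty$ and that this subsequence — hence, being strong Cauchy with strong cluster point $\boldsymbol\lambda$, the whole sequence $\{\boldsymbol\lambda^{\boldsymbol\sigma_\ell}_{\mathbf A_\ell}\}_{\ell\geqslant m}$ — converges strongly to $\boldsymbol\lambda$ in $\mathcal E^+_\alpha(\mathbf A_{\ell_0})$. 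I would then verify $\boldsymbol\lambda\in\mathfrak M^{\boldsymbol\sigma}(\mathbf A,\mathbf a,\mathbf g)$: the inclusion $S^{\lambda^i}_X\subset A_i$ holds because along the subsequence $\lambda^i$ is a vague limit of measures carried by $A^\ell_i\subset A^k_i$ for $\ell\geqslant k$, so $S^{\lambda^i}_X\subset\bigcap_k A^k_i=A_i$; the domination $\lambda^i\leqslant\sigma^i$ holds because, testing against $\varphi\in C_0(X)$ with $\varphi\geqslant0$ and passing to the limit in $\langle\varphi,\lambda^i_\ell\rangle\leqslant\langle\varphi,\sigma^i_\ell\rangle$ via (\ref{eq:vague}), one gets $\langle\varphi,\lambda^i\rangle\leqslant\langle\varphi,\sigma^i\rangle$; and $\langle g_i,\lambda^i\rangle=a_i$ already because $\boldsymbol\lambda\in\mathfrak M^{\boldsymbol\sigma_{\ell_0}}(\mathbf A_{\ell_0},\mathbf a,\mathbf g)$.

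Finally, by Lemma~\ref{gauss:lsc}, $G_{\alpha,\mathbf f}$ is vaguely l.s.c. (Case~I) resp. strongly continuous (Case~II), so $G_{\alpha,\mathbf f}(\boldsymbol\lambda)\leqslant\liminf G_{\alpha,\mathbf f}(\boldsymbol\lambda^{\boldsymbol\sigma_\ell}_{\mathbf A_\ell})=L$ resp. $G_{\alpha,\mathbf f}(\boldsymbol\lambda)=L$; in either case $-\infty<G_{\alpha,\mathbf f}(\boldsymbol\lambda)\leqslant L<\infty$ by (\ref{GII})--(\ref{GI}), so $\boldsymbol\lambda\in\mathcal E^{\boldsymbol\sigma}_{\alpha,\mathbf f}(\mathbf A,\mathbf a,\mathbf g)$ and therefore $G_{\alpha,\mathbf f}(\boldsymbol\lambda)\geqslant G^{\boldsymbol\sigma}_{\alpha,\mathbf f}(\mathbf A,\mathbf a,\mathbf g)\geqslant L$; all three quantities coincide, yielding (\ref{sigma}) and $\boldsymbol\lambda\in\mathfrak S^{\boldsymbol\sigma}_{\alpha,\mathbf f}(\mathbf A,\mathbf a,\mathbf g)$. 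Applying the same argument to an arbitrary subsequence shows the vague cluster set of $\{\boldsymbol\lambda^{\boldsymbol\sigma_\ell}_{\mathbf A_\ell}\}_{\ell\geqslant m}$ is contained in $\mathfrak S^{\boldsymbol\sigma}_{\alpha,\mathbf f}(\mathbf A,\mathbf a,\mathbf g)$; and since by Lemma~\ref{lemma:unique:} $\|R\boldsymbol\lambda-R\boldsymbol\lambda^{\boldsymbol\sigma}_{\mathbf A}\|_\alpha=0$, the strong convergence just obtained (which reads the same way in $\mathcal E^+_\alpha(\mathbf A_m)$, the semimetric being given by the same map $R$) together with the triangle inequality gives $\boldsymbol\lambda^{\boldsymbol\sigma_\ell}_{\mathbf A_\ell}\to\boldsymbol\lambda^{\boldsymbol\sigma}_{\mathbf A}$ strongly in $\mathcal E^+_\alpha(\mathbf A_m)$; when all $A_i$ are mutually essentially disjoint, $\mathfrak S^{\boldsymbol\sigma}_{\alpha,\mathbf f}(\mathbf A,\mathbf a,\mathbf g)=\{\boldsymbol\lambda^{\boldsymbol\sigma}_{\mathbf A}\}$ by Lemma~\ref{lemma:unique:}, so the unique cluster point of the sequence in the vaguely compact Hausdorff space $\mathfrak M^+(\mathbf A_{\ell_0})$ forces vague convergence to it. The main obstacle is Step~3: matching the vague compactness with the correct completeness statement in the \emph{two separate settings} (Theorem~\ref{cor-complete} is available only under (\ref{boundd}) and (\ref{one}), so in the compact case one re-runs the argument of the proof of Theorem~\ref{th-main-comp} instead), and then checking that the strong-and-vague limit is admissible for the \emph{limiting} data $(\mathbf A,\boldsymbol\sigma)$, which is where $A_i=\bigcap_k A^k_i$ and the vague convergence $\boldsymbol\sigma_\ell\to\boldsymbol\sigma$ enter essentially.
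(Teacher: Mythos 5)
Your proposal is correct and follows essentially the same route as the paper's proof: the monotone nesting (\ref{eq:mon}) of the admissible classes, a convexity-based strong Cauchy estimate for the minimizers, vague compactness via Lemmas~\ref{lemma-rel-comp} and~\ref{lemma-rel-cl}, the dichotomy between the compact setting (continuity of potentials, rerunning the argument of Theorem~\ref{th-main-comp}) and the noncompact setting (the completeness results, Theorems~\ref{complete} and~\ref{cor-complete}), and finally Lemma~\ref{gauss:lsc} together with Lemma~\ref{lemma:unique:} to identify the limit and pass to the fixed solution $\boldsymbol\lambda^{\boldsymbol\sigma}_{\mathbf A}$. The only deviation is that you derive the Cauchy property from the parallelogram identity applied in the nested convex class $\mathcal E^{\boldsymbol\sigma_\ell}_{\alpha,\mathbf f}(\mathbf A_\ell,\mathbf a,\mathbf g)$, giving the bound $2\bigl(G^{\boldsymbol\sigma_{\ell'}}_{\alpha,\mathbf f}(\mathbf A_{\ell'},\mathbf a,\mathbf g)-G^{\boldsymbol\sigma_{\ell}}_{\alpha,\mathbf f}(\mathbf A_{\ell},\mathbf a,\mathbf g)\bigr)$, whereas the paper uses the slightly sharper first-variation inequality (\ref{convex}); both rest on the same inclusions and are equally valid.
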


\begin{proof} From the monotonicity of $\mathbf{A}_\ell$ and $\boldsymbol{\sigma}_{\ell}$ we get
\begin{equation}\label{eq:mon}\mathcal E^{\boldsymbol\sigma}_{\alpha,\mathbf f}(\mathbf A,\mathbf a,\mathbf g)\subset\mathcal E^{\boldsymbol\sigma_{\ell+1}}_{\alpha,\mathbf f}(\mathbf A_{\ell+1},\mathbf a,\mathbf g)\subset\mathcal E^{\boldsymbol\sigma_{\ell}}_{\alpha,\mathbf f}(\mathbf A_{\ell},\mathbf a,\mathbf g),\quad\ell\in\mathbb N,\end{equation}
and therefore
\begin{equation}\label{Gfund}-\infty<G^{\boldsymbol{\sigma}_{\ell_0}}_{\alpha,\mathbf{f}}(\mathbf{A}_{\ell_0},\mathbf{a},\mathbf{g})\leqslant\lim_{\ell\to\infty}\,G^{\boldsymbol{\sigma}_\ell}_{\alpha,\mathbf{f}}(\mathbf{A}_\ell,\mathbf{a},\mathbf{g})\leqslant
G^{\boldsymbol{\sigma}}_{\alpha,\mathbf{f}}(\mathbf{A},\mathbf{a},\mathbf{g})
<\infty,\end{equation} where the first inequality is valid by (\ref{-finite}) with $\mathbf{A}_{\ell_0}$ and $\boldsymbol{\sigma}_{\ell_0}$ in place of $\mathbf A$ and $\boldsymbol\sigma$, respectively, while the last one holds by the (standing) assumption (\ref{gauss-finite}).

According to Theorem~\ref{th-main-comp} and Theorem~\ref{th-main}, under the stated hypotheses for every $\ell\geqslant\ell_0$ there exists a minimizer
$\boldsymbol{\lambda}_\ell:=\boldsymbol{\lambda}^{\boldsymbol{\sigma}_\ell}_{\mathbf{A}_\ell}\in\mathfrak
S^{\boldsymbol{\sigma}_\ell}_{\alpha,\mathbf{f}}(\mathbf{A}_\ell,\mathbf{a},\mathbf{g})$. By (\ref{Gfund}), $\lim_{\ell\to\infty}\,G_{\alpha,\mathbf{f}}(\boldsymbol{\lambda}_\ell)$
exists and
\begin{equation}\label{Gfund2}
-\infty<\lim_{\ell\to\infty}\,G_{\alpha,\mathbf{f}}(\boldsymbol{\lambda}_\ell)\leqslant
G^{\boldsymbol{\sigma}}_{\alpha,\mathbf{f}}(\mathbf{A},\mathbf{a},\mathbf{g})
<\infty.\end{equation}
For an arbitrary fixed $m\geqslant\ell_0$ we also see from (\ref{eq:mon}) that
\[\boldsymbol{\lambda}_\ell\in\mathcal E^{\boldsymbol\sigma_m}_{\alpha,\mathbf f}(\mathbf A_m,\mathbf a,\mathbf g)\quad\text{for all  \ }\ell\geqslant m.\]

We next proceed by showing that
\begin{equation}\label{convex}
\|\boldsymbol{\lambda}_{\ell_2}-
\boldsymbol{\lambda}_{\ell_1}\|^2_{\mathcal E^+_\alpha(\mathbf A_{m})}\leqslant
G_{\alpha,\mathbf{f}}(\boldsymbol{\lambda}_{\ell_2})-G_{\alpha,\mathbf{f}}(\boldsymbol{\lambda}_{\ell_1})\quad\text{whenever
 \ }m\leqslant\ell_1\leqslant\ell_2.
\end{equation}
For any $\tau\in(0,1]$ we have $\boldsymbol{\mu}:=(1-\tau)\boldsymbol{\lambda}_{\ell_1}+
\tau\boldsymbol{\lambda}_{\ell_2}\in\mathcal
E^{\boldsymbol{\sigma}_{\ell_1}}_{\alpha,\mathbf{f}}(\mathbf{A}_{\ell_1},\mathbf{a},\mathbf{g})$, hence
$G_{\alpha,\mathbf{f}}(\boldsymbol{\mu})\geqslant
G_{\alpha,\mathbf{f}}(\boldsymbol{\lambda}_{\ell_1})$. Evaluating $G_{\alpha,\mathbf{f}}(\boldsymbol{\mu})$ and then letting $\tau\to0$, we get
\[-\kappa_\alpha(\boldsymbol{\lambda}_{\ell_1},\boldsymbol{\lambda}_{\ell_1})+
\kappa_\alpha(\boldsymbol{\lambda}_{\ell_1},\boldsymbol{\lambda}_{\ell_2})-
\langle\mathbf{f},\boldsymbol{\lambda}_{\ell_1}\rangle+
\langle\mathbf{f},\boldsymbol{\lambda}_{\ell_2}\rangle\geqslant0,\]
and (\ref{convex}) follows. Noting that, by (\ref{Gfund2}), the sequence $G_{\alpha,\mathbf{f}}(\boldsymbol{\lambda}_\ell)$, $\ell\geqslant\ell_0$, is Cauchy
in $\mathbb R$, we see from (\ref{convex}) that $\{\boldsymbol{\lambda}_{\ell}\}_{\ell\geqslant m}$
is strong Cauchy in $\mathcal E^{\boldsymbol\sigma_m}_\alpha(\mathbf{A}_m,\mathbf{a},\mathbf{g})$.

According to Lemma~\ref{lemma-rel-cl} and Remark~\ref{rem-net}, the set $\mathfrak M^{\boldsymbol\sigma_m}(\mathbf A_m,\mathbf a,\mathbf g)$ is sequentially vaguely compact. Hence there is a (strong Cauchy) subsequence $\{\boldsymbol{\lambda}_{\ell_k}\}$ of $\{\boldsymbol{\lambda}_{\ell}\}_{\ell\geqslant m}$ such that \begin{equation}\label{vaguelast}\boldsymbol{\lambda}_{\ell_k}\to\boldsymbol\lambda\text{ \ vaguely as $k\to\infty$},\end{equation}
where $\boldsymbol\lambda\in\mathfrak M^{\boldsymbol\sigma_m}(\mathbf A_m,\mathbf a,\mathbf g)$. Since the vague limit is unique, $\lambda^i$ is carried by $A^m_i$ for every $m\geqslant\ell_0$, and hence by $A_i=\bigcap_{m\geqslant\ell_0}\,A^m_i$. As the vague limit of the (positive) measures $\sigma_{\ell_k}^i-\lambda_{\ell_k}^i$ is likewise the positive measure $\sigma^i-\lambda^i$ (see\ (\ref{eq:vague}) and (\ref{vaguelast})), we altogether get
\begin{equation}\label{eq:incl}\boldsymbol{\lambda}\in\mathfrak M^{\boldsymbol\sigma}(\mathbf A,\mathbf a,\mathbf g).\end{equation}

Assume first that $\mathbf A_{\ell_0}$ and $\boldsymbol\sigma_{\ell_0}$ satisfy the assumptions of Theorem~\ref{th-main-comp}. Then so do $\mathbf A_{\ell}$ and $\boldsymbol\sigma_{\ell}$ for every $\ell\geqslant\ell_0$ and hence, according to Lemma~\ref{cont-pot}, all the potentials $\kappa_\alpha(x,\lambda_\ell^i)$ with $\ell\geqslant\ell_0$ and $i\in I$ are (bounded and finitely) continuous on the compact sets $A_\ell^i$. Applying arguments similar to those that have been applied in the proof of Theorem~\ref{th-main-comp} we then conclude from (\ref{vaguelast}) and (\ref{eq:incl}) that $\boldsymbol\lambda\in\mathcal E_\alpha^{\boldsymbol\sigma}(\mathbf A,\mathbf a,\mathbf g)$ and moreover
\begin{equation}\label{stronglast}\boldsymbol{\lambda}_{\ell_k}\to\boldsymbol\lambda\text{ \ strongly in $\mathcal
E_\alpha^+(\mathbf{A}_{m})$ as $k\to\infty$}.\end{equation}
In view of Lemma~\ref{gauss:lsc} applied to $\mathbf A_m$ instead of $\mathbf A$, we get from (\ref{Gfund2}), (\ref{vaguelast}), and (\ref{stronglast})
\[-\infty<G_{\alpha,\mathbf f}(\boldsymbol{\lambda})\leqslant\lim_{k\to\infty}\,G_{\alpha,\mathbf f}(\boldsymbol{\lambda}_{\ell_k})=\lim_{k\to\infty}\,G^{\boldsymbol\sigma_{\ell_k}}_{\alpha,\mathbf{f}}(\mathbf{A}_{\ell_k},\mathbf{a},\mathbf{g})\leqslant
G^{\boldsymbol{\sigma}}_{\alpha,\mathbf{f}}(\mathbf{A},\mathbf{a},\mathbf{g})
<\infty,\]
the first inequality being valid by (\ref{GII}) and (\ref{GI}). Thus $\boldsymbol\lambda\in\mathcal E^{\boldsymbol\sigma}_{\alpha,\mathbf f}(\mathbf A,\mathbf a,\mathbf g)$ (see (\ref{eq:incl})), and hence $G_{\alpha,\mathbf f}(\boldsymbol\lambda)\geqslant
G^{\boldsymbol\sigma}_{\alpha,\mathbf f}(\mathbf A,\mathbf a,\mathbf g)$. Combined with the last display, this  proves
(\ref{sigma}) and also
\begin{equation}\label{inclast}\boldsymbol\lambda\in\mathfrak
S^{\boldsymbol\sigma}_{\alpha,\mathbf{f}}(\mathbf{A},\mathbf{a},\mathbf{g}).\end{equation}

Assume now instead that $\mathbf A_{\ell_0}$ and $\boldsymbol\sigma_{\ell_0}$ satisfy the assumptions of Theorem~\ref{th-main}. Applying Theorem~\ref{complete}, we infer from what has been obtained above (see (\ref{vaguelast}) and (\ref{eq:incl})) that $\boldsymbol\lambda\in\mathcal E^{\boldsymbol\sigma}_\alpha(\mathbf A,\mathbf a,\mathbf g)$ and  $\boldsymbol\lambda_{\ell_k}\to\boldsymbol\lambda$ (vaguely and) strongly in $\mathcal
E_\alpha^+(\mathbf{A}_{m})$. Then in the same way as it has been established just above, we again get (\ref{sigma}) and (\ref{inclast}).

It has thus been shown that, under the hypotheses of Theorem~\ref{th:cont}, relation (\ref{sigma}) holds and $\bigl\{\boldsymbol\lambda^{\boldsymbol\sigma_\ell}_{\mathbf A_\ell}\bigr\}_{\ell\geqslant m}$, being strong Cauchy in $\mathcal E^+_\alpha(\mathbf A_m)$,
converges strongly in $\mathcal
E_\alpha^+(\mathbf{A}_{m})$ to any of its vague cluster points $\boldsymbol\lambda$, and also that this $\boldsymbol\lambda$ solves Problem~\ref{pr2} for the condenser $\mathbf A$ and the constraint $\boldsymbol\sigma$. In the case where all the $A_i$, $i\in I$, are mutually essentially disjoint, such a solution is determined uniquely according to Lemma~\ref{lemma:unique:}, so that the vague cluster set of $\bigl\{\boldsymbol\lambda^{\boldsymbol\sigma_\ell}_{\mathbf A_{\ell}}\bigr\}_{\ell\geqslant m}$ reduces to the given $\boldsymbol\lambda$. Hence $\boldsymbol\lambda^{\boldsymbol\sigma_\ell}_{\mathbf A_\ell}\to\boldsymbol\lambda$ also vaguely \cite[Chapter~I, Section~9, n$^\circ$\,1, Corollary]{B1}.
\end{proof}

\section{The $\mathbf f$-weighted vector potential of a minimizer $\boldsymbol\lambda\in\mathfrak S^{\boldsymbol{\sigma}}_{\alpha,\mathbf f}(\mathbf A,\mathbf a,\mathbf g)$}\label{sec:descr}

Theorem~\ref{desc-pot} below establishes a description to the $\mathbf f$-weighted $\alpha$-Riesz vector potentials $\mathbf W_{\alpha,\mathbf f}^{\boldsymbol\lambda}=\bigl(W_{\alpha,\mathbf f}^{\boldsymbol\lambda,i}\bigr)_{i\in I}$ (see (\ref{wpot})) of the solutions to Problem~\ref{pr2} (provided they exist), and it also singles out their characteristic properties.

\begin{lemma}\label{aux43}For\/ $\boldsymbol\lambda\in\mathcal E^{\boldsymbol\sigma}_{\alpha,\mathbf f}(\mathbf A,\mathbf a,\mathbf g)$ to solve Problem\/~{\rm\ref{pr2}}, it is necessary and sufficient that
\begin{equation}\label{aux431}\sum_{i\in I}\,\bigl\langle W^{\boldsymbol\lambda,i}_{\alpha,\mathbf f},\nu^i-\lambda^i\bigr\rangle\geqslant0\quad\text{for all \ }\boldsymbol\nu\in\mathcal E^{\boldsymbol\sigma}_{\alpha,\mathbf f}(\mathbf A,\mathbf a,\mathbf g).\end{equation}
\end{lemma}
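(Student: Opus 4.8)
The plan is to recognize Lemma~\ref{aux43} as the standard first-order variational characterization of a minimizer of a convex functional over a convex set, obtained by differentiating $G_{\alpha,\mathbf f}$ along line segments. First I would fix $\boldsymbol\lambda\in\mathcal E^{\boldsymbol\sigma}_{\alpha,\mathbf f}(\mathbf A,\mathbf a,\mathbf g)$ and an arbitrary competitor $\boldsymbol\nu\in\mathcal E^{\boldsymbol\sigma}_{\alpha,\mathbf f}(\mathbf A,\mathbf a,\mathbf g)$. This class is convex, being the intersection of $\mathfrak M^{\boldsymbol\sigma}(\mathbf A)$, $\mathfrak M^+(\mathbf A,\mathbf a,\mathbf g)$ and $\mathcal E^+_{\alpha,\mathbf f}(\mathbf A)$, so the segment $\boldsymbol\mu_t:=\boldsymbol\lambda+t(\boldsymbol\nu-\boldsymbol\lambda)$, $t\in[0,1]$, stays in it. Using $R\boldsymbol\mu_t=R\boldsymbol\lambda+t(R\boldsymbol\nu-R\boldsymbol\lambda)$ together with Lemma~\ref{l-Ren} and the pre-Hilbert structure of $\mathcal E_\alpha(\mathbb R^n)$, I would expand, by straightforward computation from (\ref{wen}),
\[G_{\alpha,\mathbf f}(\boldsymbol\mu_t)-G_{\alpha,\mathbf f}(\boldsymbol\lambda)=2t\,\Bigl[\kappa_\alpha(\boldsymbol\lambda,\boldsymbol\nu-\boldsymbol\lambda)+\langle\mathbf f,\boldsymbol\nu-\boldsymbol\lambda\rangle\Bigr]+t^2\,\|\boldsymbol\nu-\boldsymbol\lambda\|^2_{\mathcal E^+_\alpha(\mathbf A)}.\]

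The key algebraic point is then to identify the bracket with $\sum_{i\in I}\langle W^{\boldsymbol\lambda,i}_{\alpha,\mathbf f},\nu^i-\lambda^i\rangle$. By the symmetry of the scalar kernel and a relabelling of summation indices, together with (\ref{env}) and (\ref{potv}),
\[\kappa_\alpha(\boldsymbol\lambda,\boldsymbol\nu-\boldsymbol\lambda)=\sum_{i,j\in I}s_is_j\,\kappa_\alpha(\lambda^j,\nu^i-\lambda^i)=\sum_{i\in I}\bigl\langle\kappa^i_{\boldsymbol\lambda},\nu^i-\lambda^i\bigr\rangle,\]
whence, adding $\langle\mathbf f,\boldsymbol\nu-\boldsymbol\lambda\rangle=\sum_i\langle f_i,\nu^i-\lambda^i\rangle$ and using (\ref{wpot}), the bracket becomes $\sum_i\langle\kappa^i_{\boldsymbol\lambda}+f_i,\nu^i-\lambda^i\rangle=\sum_i\langle W^{\boldsymbol\lambda,i}_{\alpha,\mathbf f},\nu^i-\lambda^i\rangle$. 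Before carrying out these regroupings I must verify that every pairing splits into finite pieces: each $\kappa_\alpha(\lambda^j,\nu^i)$ is finite because $\lambda^j,\nu^i$ have finite energy, while each $\langle f_i,\nu^i\rangle$ and $\langle f_i,\lambda^i\rangle$ is finite --- in Case~I because the $f_i$ are bounded below on the relevant supports and $\langle\mathbf f,\cdot\rangle$ is finite, and in Case~II because $f_i=s_i\kappa_\alpha(\cdot,\zeta)$ with $\zeta\in\mathcal E_\alpha(\mathbb R^n)$.

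With the displayed identity in hand, both implications are immediate. For necessity: if $\boldsymbol\lambda$ solves Problem~\ref{pr2}, the left-hand side above is ${}\geqslant0$ for every $t\in(0,1]$; dividing by $t$ and letting $t\downarrow0$ removes the quadratic term and yields (\ref{aux431}). For sufficiency: assuming (\ref{aux431}), take $t=1$, so $\boldsymbol\mu_1=\boldsymbol\nu$; the linear term is ${}\geqslant0$ by hypothesis and $\|\boldsymbol\nu-\boldsymbol\lambda\|^2_{\mathcal E^+_\alpha(\mathbf A)}\geqslant0$ by strict positive definiteness (Lemma~\ref{l-Ren}), so $G_{\alpha,\mathbf f}(\boldsymbol\nu)\geqslant G_{\alpha,\mathbf f}(\boldsymbol\lambda)$, and as $\boldsymbol\nu$ was arbitrary, $\boldsymbol\lambda$ is a minimizer. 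The computation itself is routine; the one point I would write out most carefully --- and the only genuine obstacle --- is the justification that the quadratic expansion of $G_{\alpha,\mathbf f}(\boldsymbol\mu_t)$ and its regrouping into $i$-components involve only finite quantities, so that no $\infty-\infty$ ambiguity arises. This is exactly where the standing membership $\boldsymbol\lambda,\boldsymbol\nu\in\mathcal E^{\boldsymbol\sigma}_{\alpha,\mathbf f}(\mathbf A,\mathbf a,\mathbf g)\subset\mathcal E^+_{\alpha,\mathbf f}(\mathbf A)$, Lemma~\ref{l-Ren}, and the Case~I/Case~II dichotomy are used.
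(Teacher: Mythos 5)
Your proposal is correct and follows essentially the same route as the paper: the paper's proof likewise expands $G_{\alpha,\mathbf f}\bigl(h\boldsymbol\nu+(1-h)\boldsymbol\lambda\bigr)-G_{\alpha,\mathbf f}(\boldsymbol\lambda)=2h\sum_{i\in I}\bigl\langle W^{\boldsymbol\lambda,i}_{\alpha,\mathbf f},\nu^i-\lambda^i\bigr\rangle+h^2\|\boldsymbol\nu-\boldsymbol\lambda\|^2_{\mathcal E^+_\alpha(\mathbf A)}$, uses convexity of the admissible class, divides by $h$ and lets $h\to0$ for necessity, and takes $h=1$ for sufficiency. Your extra attention to the finiteness of the individual pairings (via membership in $\mathcal E^{\boldsymbol\sigma}_{\alpha,\mathbf f}(\mathbf A,\mathbf a,\mathbf g)$ and the Case~I/Case~II dichotomy) is a sound elaboration of what the paper compresses into ``by direct calculation.''
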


\begin{proof}By direct calculation, for any $\boldsymbol\mu,\boldsymbol\nu\in\mathcal E^{\boldsymbol\sigma}_{\alpha,\mathbf f}(\mathbf A,\mathbf a,\mathbf g)$ and any $h\in(0,1]$ we obtain
\[G_{\alpha,\mathbf f}\bigl(h\boldsymbol\nu+(1-h)\boldsymbol\mu\bigr)-G_{\alpha,\mathbf f}(\boldsymbol\mu)=2h\sum_{i\in I}\,\bigl\langle W^{\boldsymbol\mu,i}_{\alpha,\mathbf f},\nu^i-\mu^i\bigr\rangle+h^2\|\boldsymbol\nu-\boldsymbol\mu\|^2_{\mathcal E^+_\alpha(\mathbf A)}.\]
If $\boldsymbol\mu=\boldsymbol\lambda$ solves Problem~\ref{pr2} then the left hand (and hence the right hand) side of this display is ${}\geqslant0$, for the class
$\mathcal E^{\boldsymbol\sigma}_{\alpha,\mathbf f}(\mathbf A,\mathbf a,\mathbf g)$ is convex, which leads to (\ref{aux431}) by letting $h\to0$ (after division by $h$).
Conversely, if (\ref{aux431}) holds, then the preceding formula with $\boldsymbol\mu=\boldsymbol\lambda$ and $h=1$ implies that $G_{\alpha,\mathbf f}(\boldsymbol\nu)\geqslant G_{\alpha,\mathbf f}(\boldsymbol\lambda)$ for all $\boldsymbol\nu\in\mathcal E^{\boldsymbol\sigma}_{\alpha,\mathbf f}(\mathbf A,\mathbf a,\mathbf g)$, hence $\boldsymbol\lambda\in\mathfrak S^{\boldsymbol{\sigma}}_{\alpha,\mathbf f}(\mathbf A,\mathbf a,\mathbf g)$.
\end{proof}

\begin{theorem}\label{desc-pot} Assume that for every\/ $i\in I$, $\kappa_\alpha(\cdot,\sigma^i)$ is {\rm(}finitely\/{\rm)} continuous on\/ $A_i$ and upper bounded on some neighborhood of\/ $\omega_{\mathbb R^n}$, and
\begin{equation}\label{as1}\sigma^i(A_i\setminus\dot{A}_i^\delta)=0,\end{equation}
$\dot{A}_i^\delta$ being defined by\/ {\rm(\ref{circ})}. If moreover Case\/~{\rm I} takes place and
\begin{equation}\label{as2}g_i(x)\leqslant M_i<\infty\text{ \ for all \ }x\in\mathbb R^n, \ i\in I,\end{equation}  then for any given\/ $\boldsymbol\lambda\in\mathcal E^{\boldsymbol\sigma}_{\alpha,\mathbf f}(\mathbf A,\mathbf a,\mathbf g)$ the following two assertions are equivalent:
\begin{itemize}
\item[(i)] $\boldsymbol\lambda\in\mathfrak S^{\boldsymbol{\sigma}}_{\alpha,\mathbf f}(\mathbf A,\mathbf a,\mathbf g)$.
\item[(ii)] There exists\/ $(w_{\boldsymbol\lambda}^i)_{i\in I}\in\mathbb R^{|I|}$ {\rm(}where\/ $|I|:={\rm Card}\,I${\rm)} such that for all\/ $i\in I$
\begin{align}\label{b1}W^{\boldsymbol\lambda,i}_{\alpha,\mathbf f}&\geqslant w_{\boldsymbol\lambda}^ig_i\quad(\sigma^i-\lambda^i)\text{-a.e.\ on \ }A_i,\\
\label{b2}W^{\boldsymbol\lambda,i}_{\alpha,\mathbf f}&\leqslant w_{\boldsymbol\lambda}^ig_i\quad\text{everywhere on \ }S_{\mathbb R^n}^{\lambda^i}.
\end{align}
\end{itemize}
\end{theorem}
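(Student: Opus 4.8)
The backbone is Lemma~\ref{aux43}: $\boldsymbol\lambda\in\mathcal E^{\boldsymbol\sigma}_{\alpha,\mathbf f}(\mathbf A,\mathbf a,\mathbf g)$ solves Problem~\ref{pr2} if and only if $\sum_{i\in I}\langle W^{\boldsymbol\lambda,i}_{\alpha,\mathbf f},\nu^i-\lambda^i\rangle\geqslant0$ for every $\boldsymbol\nu\in\mathcal E^{\boldsymbol\sigma}_{\alpha,\mathbf f}(\mathbf A,\mathbf a,\mathbf g)$; the whole proof consists in converting this variational inequality into the pointwise conditions (\ref{b1})--(\ref{b2}) and back. First I would record, as standing facts under the hypotheses, that by Lemma~\ref{cont-pot} (applicable since $\lambda^j\leqslant\sigma^j$) each potential $\kappa_\alpha(\cdot,\lambda^j)$, and hence each component $\kappa^i_{\boldsymbol\lambda}$ of (\ref{potv}), is finite and continuous on $\mathbb R^n$; that $\kappa_\alpha(\cdot,\sigma^j)$, being continuous on $\mathbb R^n$ and upper bounded near $\omega_{\mathbb R^n}$, is globally bounded above, so that together with $f_i\in\Psi(\mathbb R^n)$ (Case~I, whence $f_i\geqslant0$) the function $W^{\boldsymbol\lambda,i}_{\alpha,\mathbf f}=\kappa^i_{\boldsymbol\lambda}+f_i$ is lower semicontinuous and bounded below by a finite constant; that $\langle W^{\boldsymbol\lambda,i}_{\alpha,\mathbf f},\lambda^i\rangle$ is finite, so $W^{\boldsymbol\lambda,i}_{\alpha,\mathbf f}$ is $\lambda^i$-integrable; and that, by (\ref{as1}), both $\lambda^i$ and $\sigma^i-\lambda^i$ are carried by $\dot A^\delta_i=\{f_i<\infty\}$.

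For the implication (ii)$\Rightarrow$(i) I would fix an admissible $\boldsymbol\nu$ and split $\nu^i-\lambda^i=(\nu^i-\lambda^i)^+-(\nu^i-\lambda^i)^-$. Since $\nu^i\leqslant\sigma^i$ gives $(\nu^i-\lambda^i)^+\leqslant\sigma^i-\lambda^i$, inequality (\ref{b1}) yields $W^{\boldsymbol\lambda,i}_{\alpha,\mathbf f}\geqslant w^i_{\boldsymbol\lambda}g_i$ $(\nu^i-\lambda^i)^+$-a.e.; since $\nu^i\geqslant0$ gives $(\nu^i-\lambda^i)^-\leqslant\lambda^i$, whose support lies in $S^{\lambda^i}_{\mathbb R^n}$, inequality (\ref{b2}) yields $W^{\boldsymbol\lambda,i}_{\alpha,\mathbf f}\leqslant w^i_{\boldsymbol\lambda}g_i$ on that support. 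Using $\langle g_i,\nu^i\rangle=\langle g_i,\lambda^i\rangle=a_i$ we then obtain $\langle W^{\boldsymbol\lambda,i}_{\alpha,\mathbf f},\nu^i-\lambda^i\rangle\geqslant w^i_{\boldsymbol\lambda}\langle g_i,\nu^i-\lambda^i\rangle=0$ for each $i$, the finiteness facts above making every integral here legitimate; summing over $i$ and applying Lemma~\ref{aux43} gives (i).

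For (i)$\Rightarrow$(ii) I would set $w^i_{\boldsymbol\lambda}:=\operatorname{ess\,inf}_{\sigma^i-\lambda^i}\bigl(W^{\boldsymbol\lambda,i}_{\alpha,\mathbf f}/g_i\bigr)$. This is a real number: it exceeds $-\infty$ by the finite lower bound on $W^{\boldsymbol\lambda,i}_{\alpha,\mathbf f}$ together with (\ref{as2}) and (\ref{infg}), and it is $<+\infty$ because $\sigma^i-\lambda^i$ is carried by $\{f_i<\infty\}$, so $W^{\boldsymbol\lambda,i}_{\alpha,\mathbf f}<\infty$ $(\sigma^i-\lambda^i)$-a.e. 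With this choice (\ref{b1}) holds by definition. To obtain the companion inequality I would test Lemma~\ref{aux43} against perturbations touching only the $i$-th plate: given a small ball $B_1$ with $\lambda^i(B_1)>0$ and a compact set $E_2\subset A_i\cap\{f_i\leqslant k\}$ with $(\sigma^i-\lambda^i)(E_2)>0$ (such $E_2$ exist by (\ref{as1}) and inner regularity, the set $\{f_i\leqslant k\}$ being closed because $f_i$ is l.s.c.), put $\nu^j:=\lambda^j$ for $j\neq i$ and $\nu^i:=\lambda^i-t\,\lambda^i|_{B_1}+s\,(\sigma^i-\lambda^i)|_{E_2}$ with $t,s\in(0,1]$ chosen so that $\langle g_i,\nu^i\rangle=a_i$; then $0\leqslant\nu^i\leqslant\sigma^i$, and $\boldsymbol\nu$ has finite $\alpha$-Riesz energy and finite $\mathbf f$-weighted energy because $\kappa_\alpha(\cdot,\sigma^i)$ is continuous (hence bounded on the compact $E_2$) and $f_i\leqslant k$ on $E_2$. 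The variational inequality then reduces to
\[\frac{\langle W^{\boldsymbol\lambda,i}_{\alpha,\mathbf f},(\sigma^i-\lambda^i)|_{E_2}\rangle}{\langle g_i,(\sigma^i-\lambda^i)|_{E_2}\rangle}\geqslant\frac{\langle W^{\boldsymbol\lambda,i}_{\alpha,\mathbf f},\lambda^i|_{B_1}\rangle}{\langle g_i,\lambda^i|_{B_1}\rangle},\]
and letting $B_1$ and $E_2$ contract to density points of $\lambda^i$ and of $\sigma^i-\lambda^i$, respectively, and invoking the Besicovitch differentiation theorem (legitimate since $W^{\boldsymbol\lambda,i}_{\alpha,\mathbf f}$ is locally integrable against these Radon measures and $g_i$ is continuous and positive) yields $W^{\boldsymbol\lambda,i}_{\alpha,\mathbf f}(y)/g_i(y)\geqslant W^{\boldsymbol\lambda,i}_{\alpha,\mathbf f}(x)/g_i(x)$ for $(\sigma^i-\lambda^i)$-a.e.\ $y$ and $\lambda^i$-a.e.\ $x$; taking the infimum over $y$ shows $W^{\boldsymbol\lambda,i}_{\alpha,\mathbf f}\leqslant w^i_{\boldsymbol\lambda}g_i$ $\lambda^i$-a.e. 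Finally $W^{\boldsymbol\lambda,i}_{\alpha,\mathbf f}-w^i_{\boldsymbol\lambda}g_i$ is lower semicontinuous, so $\{W^{\boldsymbol\lambda,i}_{\alpha,\mathbf f}>w^i_{\boldsymbol\lambda}g_i\}$ is open; being $\lambda^i$-negligible, it is disjoint from $S^{\lambda^i}_{\mathbb R^n}$, which upgrades the preceding inequality to (\ref{b2}).

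I expect the (i)$\Rightarrow$(ii) direction to be the main obstacle, for two linked reasons: one must construct admissible comparison measures that simultaneously respect the equality constraint $\langle g_i,\nu^i\rangle=a_i$, the bound $\nu^i\leqslant\sigma^i$, and the finiteness of both the energy and the $\mathbf f$-weighted energy --- this is exactly where (\ref{as1}), the continuity of $\kappa_\alpha(\cdot,\sigma^i)$, and (\ref{as2}) get used --- and one must then pass rigorously from the averaged inequalities to the pointwise ones while certifying that the Lagrange-type constants $w^i_{\boldsymbol\lambda}$ are genuine finite reals rather than $\pm\infty$.
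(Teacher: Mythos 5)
Your proposal is correct, and it shares the paper's skeleton: Lemma~\ref{aux43} as the variational characterization, the constant $w^i_{\boldsymbol\lambda}$ taken as the $(\sigma^i-\lambda^i)$-essential infimum of $W^{\boldsymbol\lambda,i}_{\alpha,\mathbf f}/g_i$ (the paper's $L_i$), and the final lower-semicontinuity upgrade from ``$\lambda^i$-a.e.'' to ``everywhere on $S^{\lambda^i}_{\mathbb R^n}$''. The route to the key inequality (\ref{b2}), however, is genuinely different. The paper first freezes the other components, rewriting the problem as a scalar one with the modified field $\tilde f_i$ of (\ref{ftilde})--(\ref{ww}), and then argues by contradiction: if (\ref{b2}) failed, lower semicontinuity would give some $w_i>L_i$ with $\lambda^i\bigl(A_i^+(w_i)\bigr)>0$ and $(\sigma^i-\lambda^i)\bigl(A_i^-(w_i)\bigr)>0$, and the explicit competitor $\theta^i=\lambda^i-\lambda^i|_{K_1}+c_i(\sigma^i-\lambda^i)|_{K_2}$ built from compact subsets of these two sets violates the (scalar) variational inequality. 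You instead keep the vector inequality, test it with the two-parameter transfers $\lambda^i-t\,\lambda^i|_{B_1}+s\,(\sigma^i-\lambda^i)|_{E_2}$ (which is equivalent in substance to the paper's scalar reduction), and convert the resulting averaged inequalities into pointwise ones via the Besicovitch differentiation theorem, the cutoff $E_2\subset\{f_i\leqslant k\}$ supplying the local integrability this needs; your (ii)$\Rightarrow$(i) via the Hahn--Jordan decomposition of $\nu^i-\lambda^i$ is just a repackaging of the paper's split over $A_i^{\pm}(w_{\lambda^i})$. Both arguments are sound. What the paper's route buys is independence of Euclidean differentiation theory---only lower semicontinuity, inner regularity, and the continuity and strict positivity of $g_i$ enter, so the argument is in the spirit of the general locally compact framework of Sections~\ref{sec-pr}--\ref{sec-Gauss-c}---whereas your route leans on the Besicovitch covering property of $\mathbb R^n$ and the $\{f_i\leqslant k\}$ bookkeeping, in exchange for a more mechanical passage from averaged to pointwise conditions.
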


\begin{proof} Since for every $i\in I$, $\kappa_\alpha(\cdot,\sigma^i)$ is continuous on $A_i$, so is $\kappa_\alpha(\cdot,\mu^i)$, where $(\mu^i)_{i\in I}$ is any measure from $\mathfrak M^{\boldsymbol\sigma}(\mathbf A,\mathbf a,\mathbf g)$ (see Lemma~\ref{cont-pot}).
By \cite[Theorem~1.7]{L}, all these potentials are then continuous on all of $\mathbb R^n$. As $\kappa_\alpha(\cdot,\sigma^i)$, and hence $\kappa_\alpha(\cdot,\mu^i)$ is bounded on some neighborhood of $\omega_{\mathbb R^n}$, it thus follows that all these potentials are bounded on all of $\mathbb R^n$. Another consequence is that
\begin{equation}\label{as3}\sigma^i|_K\in\mathcal E_\alpha^+(K)\text{ \ for any compact \ }K\subset\mathbb R^n,\end{equation}
and hence the measures $\sigma^i$, $i\in I$, are $c_\alpha$-absolutely continuous.

Suppose first that (i) holds, i.e.\ $\boldsymbol\lambda\in\mathcal E^{\boldsymbol\sigma}_{\alpha,\mathbf f}(\mathbf A,\mathbf a,\mathbf g)$ solves Problem~\ref{pr2}. To verify (ii), fix $i\in I$. For every $\boldsymbol\mu=(\mu^\ell)_{\ell\in I}\in\mathcal E^{\boldsymbol\sigma}_{\alpha,\mathbf f}(\mathbf A,\mathbf a,\mathbf g)$ write $\boldsymbol\mu_i:=(\mu_i^\ell)_{\ell\in I}$ where $\mu_i^\ell:=\mu^\ell$ for all $\ell\ne i$ and $\mu_i^i=0$; then $\boldsymbol\mu_i\in\mathcal E^+_{\alpha,\mathbf f}(\mathbf A)$. Also define $\tilde{f}_i:=f_i+(\kappa_\alpha)^i_{\boldsymbol\lambda_i}$; then by substituting (\ref{potv}) we obtain
\begin{equation}\label{ftilde}\tilde{f}_i(x)=f_i(x)+s_i\sum_{\ell\in I, \ \ell\ne i}\,s_\ell\kappa_\alpha(x,\lambda^\ell), \ x\in\mathbb R^n.\end{equation}
Being of the class $\Psi(\mathbb R^n)$, $f_i$ is l.s.c.\ on $\mathbb R^n$ and ${}\geqslant0$. Combined with the properties of $\kappa_\alpha(\cdot,\lambda^\ell)$, $\ell\in I$, established above, this implies that
\begin{equation}\label{sW}W_{\alpha,\tilde{f}_i}^{\lambda^i}:=\kappa_\alpha(\cdot,\lambda^i)+\tilde{f}_i,\end{equation}
is l.s.c.\ on $\mathbb R^n$ and lower bounded. Also note that $W_{\alpha,\tilde{f}_i}^{\lambda^i}$ is finite on $\dot{A}_i^\delta$ (see (\ref{circ})).

Furthermore, by (\ref{env}) and (\ref{wen}) we get for any
$\boldsymbol\mu\in\mathcal E^{\boldsymbol\sigma}_{\alpha,\mathbf f}(\mathbf A,\mathbf a,\mathbf g)$ with the additional property that $\boldsymbol\mu_i=\boldsymbol\lambda_i$ (in particular, for $\boldsymbol\mu=\boldsymbol\lambda$) \[G_{\alpha,\mathbf f}(\boldsymbol\mu)=G_{\alpha,\mathbf f}(\boldsymbol\lambda_i)+G_{\alpha,\tilde{f}_i}(\mu^i).\]
Combined with $G_{\alpha,\mathbf f}(\boldsymbol\mu)\geqslant G_{\alpha,\mathbf f}(\boldsymbol\lambda)$, this yields $G_{\alpha,\tilde{f}_i}(\mu^i)\geqslant G_{\alpha,\tilde{f}_i}(\lambda^i)$, and hence $\lambda^i$ minimizes $G_{\alpha,\tilde{f}_i}(\nu)$ where $\nu$ ranges over $\mathcal E^{\sigma^i}_{\alpha,\tilde{f}_i}(A_i,a_i,g_i)$.
This enables us to show that there exists $w_{\lambda^i}\in\mathbb R$ such that
\begin{align}\label{sing1}W_{\alpha,\tilde{f}_i}^{\lambda^i}&\geqslant w_{\lambda^i}g_i\quad(\sigma^i-\lambda^i)\text{-a.e.\ on \ }A_i,\\
\label{sing2}W_{\alpha,\tilde{f}_i}^{\lambda^i}&\leqslant w_{\lambda^i}g_i\quad\text{everywhere on \ }S^{\lambda^i}_{\mathbb R^n}.\end{align}

Indeed, (\ref{sing1}) holds with
\[w_{\lambda^i}:=L_i:=\sup\,\bigl\{t\in\mathbb R: \ W_{\alpha,\tilde{f}_i}^{\lambda^i}\geqslant tg_i\quad(\sigma^i-\lambda^i)\text{-a.e.\ on \ }A_i\bigr\}.\]
In turn, (\ref{sing1}) with $w_{\lambda^i}=L_i$ implies that $L_i<\infty$, because
\[\widetilde{W}_{\alpha,\tilde{f}_i}^{\lambda^i}(x):=\frac{W_{\alpha,\tilde{f}_i}^{\lambda^i}(x)}{g_i(x)}<\infty\] for all $x\in\dot{A}_i^\delta$ (see (\ref{infg})), hence $(\sigma^i-\lambda^i)$-a.e.\ on $A_i$ by (\ref{as1}). Also, $L_i>-\infty$ since in consequence of (\ref{as2}),
$\widetilde{W}_{\alpha,\tilde{f}_i}^{\lambda^i}$ is lower bounded on~$\mathbb R^n$.

We next proceed by establishing (\ref{sing2}) with $w_{\lambda^i}=L_i$. Assume, on the contrary, that this does not hold. For any $w\in\mathbb R$ write
\begin{align*}A_i^+(w)&:=\bigl\{x\in A_i:\ W_{\alpha,\tilde{f}_i}^{\lambda^i}(x)>wg_i(x)\bigr\},\\
A_i^-(w)&:=\bigl\{x\in A_i:\ W_{\alpha,\tilde{f}_i}^{\lambda^i}(x)<wg_i(x)\bigr\}.\end{align*}
By the lower semicontinuity of $\widetilde{W}_{\alpha,\tilde{f}_i}^{\lambda^i}$ on $\mathbb R^n$, there is $w_i\in(L_i,\infty)$ such that
$\lambda^i(A_i^+(w_i))>0$. At the same time, as $w_i>L_i$, (\ref{sing1}) with $w_{\lambda^i}=L_i$  yields
$(\sigma^i-\lambda^i)(A_i^-(w_i))>0$. Therefore, one can choose compact sets $K_1\subset A_i^+(w_i)$ and $K_2\subset A_i^-(w_i)$ so that
\begin{equation}\label{c01}0<\langle g_i,\lambda^i|_{K_1}\rangle<\langle g_i,(\sigma^i-\lambda^i)|_{K_2}\rangle.\end{equation}

Write $\tau^i:=(\sigma^i-\lambda^i)|_{K_2}$; then $\kappa_\alpha(\tau^i,\tau^i)<\infty$ by (\ref{as3}). Since $\langle W_{\alpha,\tilde{f}_i}^{\lambda^i},\tau^i\rangle\leqslant\bigl\langle w_ig_i,\tau^i\bigr\rangle<\infty$,
we get $\langle\tilde{f}_i,\tau^i\rangle<\infty$ in view of (\ref{sW}). Define
\[\theta^i:=\lambda^i-\lambda^i|_{K_1}+c_i\tau^i,\text{ \ where \ }c_i:=\langle g_i,\lambda^i|_{K_1}\rangle/\langle g_i,\tau^i\rangle.\]
Noting that $c_i\in(0,1)$ by (\ref{c01}), we see by straightforward verification that $\langle g_i,\theta^i\rangle=a_i$ and $\theta^i\leqslant\sigma^i$, hence $\theta^i\in\mathcal E^{\sigma^i}_{\alpha,\tilde{f}_i}(A_i,a_i,g_i)$. On the other hand,
\begin{align*}
\langle W_{\alpha,\tilde{f}_i}^{\lambda^i},\theta^i-\lambda^i\rangle&=\langle
W_{\alpha,\tilde{f}_i}^{\lambda^i}- w_ig_i,\theta^i-\lambda^i\rangle\\&{}=-\langle
W_{\alpha,\tilde{f}_i}^{\lambda^i}- w_ig_i,\lambda^i|_{K_1}\rangle+c_i\langle
W_{\alpha,\tilde{f}_i}^{\lambda^i}- w_ig_i,\tau^i\rangle<0,\end{align*}
which is impossible in view of the scalar version of Lemma~\ref{aux43}. The contradiction obtained establishes (\ref{sing2}).

Substituting (\ref{ftilde}) into (\ref{sW}) and then comparing the result obtained with (\ref{potv}) and (\ref{wpot}), we get
\begin{equation}\label{ww}W_{\alpha,\tilde{f}_i}^{\lambda^i}=W_{\alpha,\mathbf f}^{\boldsymbol\lambda,i}.\end{equation}
Combined with  (\ref{sing1}) and (\ref{sing2}), this proves (\ref{b1}) and
(\ref{b2}) with $w_{\boldsymbol\lambda}^i:=w_{\lambda^i}$, $i\in I$.

Conversely, suppose (ii) holds. On account of (\ref{ww}), for every $i\in I$ relations (\ref{sing1}) and (\ref{sing2}) are then fulfilled with $w_{\lambda^i}:=w_{\boldsymbol\lambda}^i$ and $\tilde{f}_i$ defined by (\ref{ftilde}). This yields
$\lambda^i(A_i^+(w_{\lambda^i}))=0$  and $(\sigma^i-\lambda^i)(A_i^-(w_{\lambda^i}))=0$.
For any $\boldsymbol\nu\in\mathcal E^{\boldsymbol\sigma}_{\alpha,\mathbf f}(\mathbf A,\mathbf a,\mathbf g)$ we therefore get
\begin{align*}\langle
W^{\boldsymbol\lambda,i}_{\alpha,\mathbf f}&,\nu^i-\lambda^i\rangle=\langle
W_{\alpha,\tilde{f}_i}^{\lambda^i}-w_{\lambda^i}g_i,\nu^i-\lambda^i\rangle\\
&{}=\langle W_{\alpha,\tilde{f}_i}^{\lambda^i}-w_{\lambda^i}g_i,\nu^i|_{A_i^+(w_{\lambda^i})}\rangle+\langle
W_{\alpha,\tilde{f}_i}^{\lambda^i}-w_{\lambda^i}g_i,(\nu^i-\sigma^i)|_{A_i^-(w_{\lambda^i})}\rangle\geqslant0.\end{align*}
Summing up these inequalities over all $i\in I$, in view of the arbitrary choice of $\boldsymbol\nu\in\mathcal E^{\boldsymbol\sigma}_{\alpha,\mathbf f}(\mathbf A,\mathbf a,\mathbf g)$ we conclude from Lemma~\ref{aux43} that $\boldsymbol\lambda$ is a solution to Problem~\ref{pr2}.\end{proof}

\section{Duality relation between non-weighted constrained and weighted unconstrained minimum $\alpha$-Riesz energy problems for scalar measures}\label{sec:dual}

We now present an extension of \cite[Corollary 2.15]{DS} to Riesz kernels. Throughout this section, set $I=\{1\}$, $s_1=+1$, $g_1=1$ and $a_1=1$. Fix a closed set $F=A_1$ in $\mathbb R^n$ with $c_\alpha(F)>0$ that may coincide with the whole of $\mathbb R^n$ and a constraint $\sigma\in\mathfrak M^+(F)$ with $1<\sigma(F)<\infty$. By Theorem~\ref{th-main} (see also \cite[Theorem~5.1]{DFHSZ} with $D=\mathbb R^n$), there exists $\lambda\in\mathcal E_\alpha^\sigma(F,1):=\mathfrak M^\sigma(F)\cap\mathcal E_\alpha^+(F,1)$ whose $\alpha$-Riesz energy is minimal in this class, i.e.
\begin{equation}\|\lambda\|_\alpha^2=\inf_{\mu\in\mathcal E_\alpha^\sigma(F,1)}\,\|\mu\|_\alpha^2,\label{pr2sc}\end{equation}
and this $\lambda$ is unique according to Lemma~\ref{lemma:unique:} (with $\mathbf f=\mathbf 0$). Write $q:=1/(\sigma(F)-1)$.

\begin{theorem}\label{dual}Assume in addition that\/ $0<\alpha\leqslant2$ and that\/ $\kappa_\alpha(\cdot,\sigma)$ is\/ {\rm(}finitely\/{\rm)} continuous and upper bounded on\/ $F$ {\rm(}hence, on\/ $\mathbb R^n$ according to\/ {\rm\cite[{\it Theorems\/}~1.5, 1.7]{L}}{\rm)}. Then the measure\/ $\theta:=q(\sigma-\lambda)$ is the solution to\/ {\rm(}the unconstrained\/{\rm)} Problem\/~{\rm\ref{pr1}} with the external field\/ $f:=-q\kappa_\alpha(\cdot,\sigma)$, i.e.\ $\theta\in\mathcal E_{\alpha,f}^+(F,1):=\mathfrak M^+(F,1)\cap\mathcal E_{\alpha,f}^+(F)$ and
\begin{equation}\label{dualeq}G_{\alpha,f}(\theta)=\inf_{\mu\in\mathcal E_{\alpha,f}^+(F,1)}\,G_{\alpha,f}(\mu).\end{equation}
Moreover, there exists\/ $\eta\in(0,\infty)$ such that
\begin{align}
\label{Wsc1}W_{\alpha,f}^\theta&=-\eta\text{ \ on \ }S(\theta),\\
\label{Wsc2}W_{\alpha,f}^\theta&\geqslant-\eta\text{ \ on \ }\mathbb R^n,
\end{align}
and these two relations\/ {\rm(\ref{Wsc1})} and\/ {\rm(\ref{Wsc2})} determine uniquely the solution to Problem\/~{\rm\ref{pr1}} among the measures of the class\/ $\mathcal E_{\alpha,f}^+(F,1)$.
\end{theorem}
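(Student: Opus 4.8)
The plan is to verify first that $\theta=q(\sigma-\lambda)$ is admissible and to compute its $f$-weighted potential, next to deduce the Frostman-type conditions (\ref{Wsc1})--(\ref{Wsc2}) from the characterization of the constrained minimizer $\lambda$ supplied by Theorem~\ref{desc-pot} together with the maximum principle for $\alpha\le2$, and finally to derive from (\ref{Wsc1})--(\ref{Wsc2}), by the standard projection argument, both that $\theta$ solves Problem~\ref{pr1} and that these two relations determine its solution uniquely. For admissibility: $\lambda\le\sigma$ gives $\theta\ge0$ and $\theta(\mathbb R^n)=q(\sigma(F)-1)=1$; since $\sigma$ is bounded and $\kappa_\alpha(\cdot,\sigma)$ is bounded on $\mathbb R^n$, we get $\kappa_\alpha(\sigma,\sigma)=\int\kappa_\alpha(\cdot,\sigma)\,d\sigma<\infty$, so $\sigma\in\mathcal E_\alpha^+(F)$, whence $\theta\in\mathcal E_\alpha^+(F)$ and $\langle f,\theta\rangle$ is finite, i.e.\ $\theta\in\mathcal E_{\alpha,f}^+(F,1)$; also $f=\kappa_\alpha(\cdot,-q\sigma)$ with $-q\sigma\in\mathcal E_\alpha(\mathbb R^n)$, so Case~II holds. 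By Lemma~\ref{cont-pot} both $\kappa_\alpha(\cdot,\sigma)$ and $\kappa_\alpha(\cdot,\lambda)$ are finitely continuous on $\mathbb R^n$, hence $W_{\alpha,f}^\theta$ is everywhere finite and, by direct computation,
\[W_{\alpha,f}^\theta=\kappa_\alpha(\cdot,\theta)+f=q\kappa_\alpha(\cdot,\sigma)-q\kappa_\alpha(\cdot,\lambda)-q\kappa_\alpha(\cdot,\sigma)=-q\,\kappa_\alpha(\cdot,\lambda);\]
in particular $-\eta\le W_{\alpha,f}^\theta\le0$ once $\eta$ is chosen below.

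Next I would apply Theorem~\ref{desc-pot} to the non-weighted constrained problem (\ref{pr2sc}) (Problem~\ref{pr2} with $I=\{1\}$, $s_1=+1$, $\mathbf g=\mathbf a=\mathbf 1$, $\mathbf f=\mathbf 0$), whose solution is the given $\lambda$: its hypotheses hold, since $\kappa_\alpha(\cdot,\sigma)$ is continuous on $F$ and bounded on $\mathbb R^n$ (hence near $\omega_{\mathbb R^n}$), $\dot A_1^\delta=F$ because $\mathbf f=\mathbf 0$ and $\delta_{\mathbf A}=\varnothing$, Case~I holds trivially, and $g_1\equiv1$ is bounded (if $S(\sigma)\subsetneq F$ one replaces $F$ by $S(\sigma)$ as in the remark after Problem~\ref{pr2}, which alters neither $\lambda$ nor the conclusions below). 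Implication (i)$\Rightarrow$(ii) then yields $w_\lambda\in\mathbb R$ with $\kappa_\alpha(\cdot,\lambda)\ge w_\lambda$ $(\sigma-\lambda)$-a.e.\ on $F$ and $\kappa_\alpha(\cdot,\lambda)\le w_\lambda$ everywhere on $S(\lambda)$. The maximum principle for the $\alpha$-Riesz kernel with $\alpha\le2$ (\cite[Theorem~1.10]{L}) extends the second inequality to all of $\mathbb R^n$; as $0<\kappa_\alpha(x_0,\lambda)\le w_\lambda<\infty$ for $x_0\in S(\lambda)\ne\varnothing$, the constant $\eta:=q\,w_\lambda$ lies in $(0,\infty)$. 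Combining the two inequalities, the set $\{x:\kappa_\alpha(x,\lambda)=w_\lambda\}$, closed by continuity of $\kappa_\alpha(\cdot,\lambda)$, has full $(\sigma-\lambda)$-measure and therefore contains $S(\sigma-\lambda)=S(\theta)$. In view of $W_{\alpha,f}^\theta=-q\,\kappa_\alpha(\cdot,\lambda)$, this is exactly $W_{\alpha,f}^\theta\ge-\eta$ on $\mathbb R^n$ and $W_{\alpha,f}^\theta=-\eta$ on $S(\theta)$, i.e.\ (\ref{Wsc1})--(\ref{Wsc2}).

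It then remains to run the convexity argument. For any $\nu\in\mathcal E_{\alpha,f}^+(F,1)$ the identity in the proof of Lemma~\ref{aux43} (with $h=1$) gives $G_{\alpha,f}(\nu)-G_{\alpha,f}(\theta)=2\langle W_{\alpha,f}^\theta,\nu-\theta\rangle+\|\nu-\theta\|_\alpha^2$; since $\langle W_{\alpha,f}^\theta,\nu\rangle\ge-\eta\,\nu(F)=-\eta$ by (\ref{Wsc2}) and $\nu\ge0$, while $\langle W_{\alpha,f}^\theta,\theta\rangle=-\eta\,\theta(F)=-\eta$ by (\ref{Wsc1}), we obtain $G_{\alpha,f}(\nu)\ge G_{\alpha,f}(\theta)+\|\nu-\theta\|_\alpha^2\ge G_{\alpha,f}(\theta)$. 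Hence $\theta$ solves Problem~\ref{pr1} and (\ref{dualeq}) holds. Applying this same estimate to an arbitrary $\theta'\in\mathcal E_{\alpha,f}^+(F,1)$ satisfying (\ref{Wsc1})--(\ref{Wsc2}) for its own constant, $\theta'$ is also a minimizer; adding the two inequalities $G_{\alpha,f}(\theta')\ge G_{\alpha,f}(\theta)+\|\theta-\theta'\|_\alpha^2$ and $G_{\alpha,f}(\theta)\ge G_{\alpha,f}(\theta')+\|\theta-\theta'\|_\alpha^2$ forces $\|\theta-\theta'\|_\alpha=0$, so $\theta'=\theta$ by strict positive definiteness of $\kappa_\alpha$. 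Thus (\ref{Wsc1})--(\ref{Wsc2}) characterize the solution uniquely.

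The main obstacle is the middle step: converting the one-sided, $(\sigma-\lambda)$-almost-everywhere variational information about $\lambda$ furnished by Theorem~\ref{desc-pot} into the genuinely pointwise and global statements (\ref{Wsc1})--(\ref{Wsc2}) about $\theta$. This is precisely where $\alpha\le2$ enters, through the maximum principle used to pass from $S(\lambda)$ to $\mathbb R^n$, and where the continuity of $\kappa_\alpha(\cdot,\lambda)$ --- a consequence of the continuity hypothesis on $\kappa_\alpha(\cdot,\sigma)$ together with $\lambda\le\sigma$ via Lemma~\ref{cont-pot} --- is needed to promote the $(\sigma-\lambda)$-a.e.\ equality $\kappa_\alpha(\cdot,\lambda)=w_\lambda$ to an equality on the whole support $S(\theta)$.
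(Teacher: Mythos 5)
Your argument is correct, and its core coincides with the paper's: you apply Theorem~\ref{desc-pot} to the constrained minimizer $\lambda$ of (\ref{pr2sc}), use Frostman's maximum principle (this is where $\alpha\leqslant2$ enters) to globalize $\kappa_\alpha(\cdot,\lambda)\leqslant w_\lambda$ from $S(\lambda)$ to $\mathbb R^n$, and use the continuity of $\kappa_\alpha(\cdot,\lambda)$ (via Lemma~\ref{cont-pot}) to upgrade the $(\sigma-\lambda)$-a.e.\ lower bound to equality on $S(\sigma-\lambda)=S(\theta)$, which after the identity $W_{\alpha,f}^\theta=-q\,\kappa_\alpha(\cdot,\lambda)$ is exactly (\ref{Wsc1})--(\ref{Wsc2}) with $\eta=qw_\lambda$. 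Where you diverge is the last step: the paper concludes (\ref{dualeq}) and the uniqueness claim by observing that (\ref{Wsc1})--(\ref{Wsc2}) are precisely the characteristic relations of \cite[Eqs.~7.9, 7.10]{ZPot2} and invoking \cite[Theorem~7.3]{ZPot2}, whereas you prove both assertions directly from the quadratic expansion $G_{\alpha,f}(\nu)-G_{\alpha,f}(\theta)=2\langle W_{\alpha,f}^\theta,\nu-\theta\rangle+\|\nu-\theta\|_\alpha^2$, using (\ref{Wsc2}) and $\nu(F)=1$ to bound $\langle W_{\alpha,f}^\theta,\nu\rangle\geqslant-\eta$ and (\ref{Wsc1}) to get $\langle W_{\alpha,f}^\theta,\theta\rangle=-\eta$, and then a symmetric two-inequality argument for uniqueness. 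This substitution is legitimate here because $W_{\alpha,f}^\theta$ is bounded and continuous and all measures involved are bounded with finite energy, and it buys self-containedness (no appeal to the external characterization theorem); the paper's route is shorter and places the result in the general framework of \cite{ZPot2}. Your additional explicit checks (admissibility of $\theta$, finiteness of energies, $\eta\in(0,\infty)$, and the reduction to $S(\sigma)$ when $S(\sigma)\subsetneq F$, which does not change $\lambda$ since every $\mu\leqslant\sigma$ is carried by $S(\sigma)$) are all sound and fill in details the paper leaves implicit.
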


\begin{remark}The external field $f$ thus defined satisfies Case~II with $\zeta=-q\sigma\leqslant0$, and this $f$ is lower bounded on $\mathbb R^n$, for $\kappa_\alpha(\cdot,\sigma)$ is upper bounded by assumption.\end{remark}

\begin{proof} Under the stated assumptions, relation (\ref{b2}) for the solution $\lambda$ to the (constrained) problem (\ref{pr2sc}) takes the form $\kappa_\alpha(\cdot,\lambda)\leqslant w$ on $S(\lambda)$, where $w\in(0,\infty)$. By the Frostman maximum principle, which can be applied because $\alpha\leqslant2$, we thus have
\begin{equation*}\kappa_\alpha(\cdot,\lambda)\leqslant w\text{ \ on \ }\mathbb R^n.\end{equation*}
Combined with (\ref{b1}), this gives
\begin{equation*}\kappa_\alpha(\cdot,\lambda)=w\quad\text{on \ }S(\sigma-\lambda),\end{equation*}
for $\kappa_\alpha(\cdot,\lambda)$ is (finitely) continuous on $\mathbb R^n$ along with $\kappa_\alpha(\cdot,\sigma)$ by Lemma~\ref{cont-pot}. In the notations used in Theorem~\ref{dual}, these two displays can alternatively be rewritten as (\ref{Wsc2}) and (\ref{Wsc1}), respectively, with $\eta:=qw$. In turn, (\ref{Wsc1}) and (\ref{Wsc2}) imply that $\theta$, $f$ and $-\eta$ satisfy \cite[Eqs.~7.9, 7.10]{ZPot2}, which according to \cite[Theorem~7.3]{ZPot2} establishes (\ref{dualeq}).
\end{proof}

{\bf Acknowledgement.} The authors express their sincere gratitude to the anonymous Referee for valuable suggestions, helping us in improving the exposition of the paper.

\end{document}